\numberwithin{equation}{section}
\numberwithin{figure}{section}
\numberwithin{table}{section}
\theoremstyle{plain}
\newtheorem{theorem}{Theorem}[section]
\newtheorem{lemma}{Lemma}[section]
\theoremstyle{definition}
\theoremstyle{remark}
\newtheorem{remark}{Remark}[section]
\title{\Large
	Bifurcation analysis for a SIRS model with a nonlinear incidence rate}
\author{{ Xiaoling Wang} and { Kuilin Wu} \thanks{Corresponding author. E-mail: mathwangxiaoling@163.com (Wang), wukuilin@126.com (Wu).}
\\
{\small School of Mathematics and Statistics, Guizhou University, Guiyang 550025, PR China}
}
\date{}
\begin{document}
\captionsetup{justification=raggedright, singlelinecheck=false}
\captionsetup[figure]{labelfont={bf},name={Fig.},labelsep=period}

\maketitle
\maketitle
\begin{abstract}
In this paper, the main purpose is to explore an SIRS epidemic model with a general nonlinear incidence rate $f(I)S=\beta I(1+\upsilon I^{k-1})S$ ($k>0$). We analyzed the existence and stability of equilibria of the epidemic model. Local bifurcation theory is applied to explore the rich variety of dynamical behavior of the model. Normal forms of the epidemic model are derived for different types of bifurcation, including Bogdanov-Takens bifurcation, Nilpotent focus bifurcation and Hopf bifurcation. The first four focal values are computed to determine the codimension of the Hopf bifurcation, which can be undergo some limit cycles. Some numerical results and simulations are presented to illustrate these theoretical results.
		\smallskip
		
		\noindent
		{\bf Key words:}
		Bogdanov-Takens bifurcation; Hopf bifurcation; Nilpotent focus bifurcation
\end{abstract}
\section{Introduction}

 In mathematical epidemiology, bifurcation phenomena refer to abrupt changes in transmission dynamics or epidemic behavior influenced by environmental, host, or pathogen factors. Bifurcation phenomena in disease dynamics determine not only the rate and extent of transmission but can also alter viral pathogenicity, thereby impacting the efficacy of public health interventions. The analysis of bifurcation phenomena is essential for predicting epidemic trends, developing effective control strategies, and allocating public health resources efficiently. In recent years, accelerated globalization and ecological transformations have led to increasingly complex transmission patterns of infectious diseases, which has made bifurcation phenomena a subject of significant research in disease prevention and control, see \cite{WS,PJ,PR,ME,WP,YD}.

The incidence rate in the infectious disease model represents the probability that the susceptible individual turns into an infective individual in a unit time, which has an important impact on the dynamic behavior of the infectious disease model. Different forms of incidence may lead to rich dynamic behavior. For instance, bilinear incidence can only describe the ideal and simple propagation process. However, by introducing the characteristics of saturation and nonlinearity, the epidemic model can capture the rich dynamic behaviors in the real world caused by complex factors such as psychological behavior, resource constraints, and super transmission, such as sudden outbreaks, persistent epidemics, and periodic fluctuations. The study of epidemic models with different forms of incidence is one of the crucial research areas in biomathematics.

Let $S(t)$, $I(t)$ and $R(t)$ denote the numbers of susceptible, infective and recovered individuals at time $t$, respectively. In most classical epidemiological models, the incidence rate is defined by mass-action incidence with bilinear interactions i.e. $\beta IS$, where $\beta$ represents the probability of transmission per contact. Classical infectious disease models with bilinear incidence rates typically admit at most one endemic equilibrium, which means they cannot represent complex phenomena like bistability or periodicity, see \cite{ST}. These simple models can provide general conclusions for long-term disease dynamics. However, the inability to account for complex population behaviors constrains the understanding of disease transmission dynamics and hinders the development of effective control strategies. Some studies have focused on complex dynamical behaviors in epidemic models with nonlinear incidence rates, see \cite{MM,HP,WA,YS,WYY,LC}.

The classical susceptible-infective-recovered-susceptible (SIRS) model is given as follows, see \cite{GO},
\begin{equation}
\begin{array}{ll}\label{1.1}
\left\{
\begin{aligned}
\dot{S}&=b-dS-f(I)S+\delta R,\\[2ex]
\dot{I}&=f(I)S-(d+\mu)I,\\[2ex]
\dot{R}&=\mu I-(d+\delta)R,
\end{aligned}
\right.
\end{array}
\end{equation}
where $b>0$ is the population recruitment rate, $d>0$ represents the natural death rate, $\mu>0$ represents the natural recovery rate of infective individuals, $\delta\geq0$ represents the rate at which recovered individuals lose immunity and return to the susceptible class.
$f(I)S$ denotes the incidence rate. It may depend on many factors, such as population density, social habits, and public health measures. Hence, various types of incidence rate have been used.

(I) Saturated incidence rate:
\begin{equation}
\begin{array}{ll}\label{0.1}
\begin{aligned}
f(I)S=\frac{\beta IS}{1+\alpha I},
\end{aligned}
\end{array}
\end{equation}
where $\beta I$ measures the infection force of the disease and $\frac{1}{1+\alpha I}$ describes the ''psychological'' effects, i.e., inhibition effect. Capasso and Serio \cite{GV} introduced the saturated incidence rate \eqref{0.1} and they extended the threshold theorem by the stability analysis of the system \eqref{1.1} equilibrium.

(II) Non-monotonic incidence rate:

Xiao and Ruan \cite{SD} investigated system \eqref{1.1} with a nonmonotone incidence rate
\begin{equation}
\begin{array}{ll}\label{0.2}
\begin{aligned}
f(I)S=\frac{\beta IS}{1+\alpha I^2}.
\end{aligned}
\end{array}
\end{equation}
They found that either the number of infective individuals tends to zero as time evolves or the disease persists by carrying out a global analysis of the model and studying the stability of the disease-free equilibrium and the endemic equilibrium.

Zhou et al. \cite{YD} proposed system \eqref{1.1} with non-monotonic incidence rate
\begin{equation}
\begin{array}{ll}\label{0.4}
\begin{aligned}
f(I)S=\frac{kIS}{1+\beta I+\alpha I^2}.
\end{aligned}
\end{array}
\end{equation}
They shown that system \eqref{1.1} with \eqref{0.4} undergoes cusp type Bogdanov-Takens bifurcation of codimension 2 and supercritical Hopf bifurcation. Xiao and Zhou \cite{DY} shown that a bistable occurs and a periodic oscillation appears.

(III) Nonlinear incidence rate:
\begin{equation}
\begin{array}{ll}\label{0.3}
\begin{aligned}
f(I)S=\frac{\beta I^2S}{1+\alpha I^2}.
\end{aligned}
\end{array}
\end{equation}
Ruan and Wang \cite{WS} shown that system \eqref{1.1} with \eqref{0.3} admits a saddle-node bifurcation, a cusp type Bogdanov-Takens bifurcation of codimension 2 and two limit cycles emerged from Hopf bifurcation. Tang et al. \cite{YDS} detected that system \eqref{1.1} with incidence rate \eqref{0.3} can admit at most two limit cycles arised from the Hopf bifurcation and undergoes homoclinic bifurcation.

(IV) General saturated incidence rate:
\begin{equation}
\begin{array}{ll}\label{0.5}
\begin{aligned}
f(I)S=\frac{\beta I^pS}{1+\alpha I^q}.
\end{aligned}
\end{array}
\end{equation}
Zhang et al. \cite{WF} investigated system \eqref{1.1} with \eqref{0.5} can undergo saddle-node bifurcation, cusp type Bogdanov-Takens bifurcation of codimension two and two limit cycles emerged by Hopf bifurcation.Cui and Zhao \cite{WYY} detected that system \eqref{1.1} with \eqref{0.5} can undergo  saddle-node bifurcation of codimension two, Bogdanov-Takens bifurcation of codimension two for general parameters $p$ and $q$. Hu et. al \cite{PZ} investigated the dynamics of system \eqref{1.1} with \eqref{0.5} and derived multiple types of bifurcations, such as supercritical Hopf bifurcation, subcritical Hopf bifurcation and cusp type Bogdanov-Takens bifurcation of codimension 2.

(V) Generalized nonlinear incidence:
\begin{equation}
\begin{array}{ll}\label{0.6}
\begin{aligned}
f(I)S=\beta\big(1+f(I,\nu)\big)\frac{IS^p}{N}.
\end{aligned}
\end{array}
\end{equation}
Alexander and Moghadas \cite{MM} found that system \eqref{1.1} with \eqref{0.6} undergoes a subcritical Hopf bifurcation, saddle-node bifurcation, Homoclinic bifurcations and two concentric limit cycles can coexist.

Some people consider an incidence rate of the form
\begin{equation}
\begin{array}{ll}\label{0.7}
\begin{aligned}
f(I)S=\beta I(1+\upsilon I^{k-1})S,
\end{aligned}
\end{array}
\end{equation}
where $\beta>0$, $\upsilon>0$ and $k>0$. $\beta IS$ represents the new infections caused by single contacts and $\beta\upsilon I^kS$ is the new infective individuals arising from $k$ exposures. $\beta$ denotes the average number of new infections per unit time in a fully susceptible population.

Lu et al. \cite{MD} shown that system \eqref{1.1} with \eqref{0.7} admits a cusp type Bogdanov-Takens bifurcation of codimension of 3 and two limit cycles bifurcated by Hopf bifurcation when $k=1$. Jin et al. \cite{YW} investigated system \eqref{1.1} with \eqref{0.7} when $k=2$ and demonstrated the existence of backward, supercritical Hopf bifurcation, subcritical Hopf bifurcation, and cusp type Bogdanov-Takens bifurcation of codimension of 2. Furthermore, they revealed bistable steady states and established explicit conditions for the asymptotic stability of the equilibria.

In this paper, we focus on the bifurcation phenomena of system \eqref{1.1} with the incidence rate \eqref{0.7},
\begin{equation}
\begin{array}{ll}\label{1.2}
\left\{
\begin{aligned}
\dot{S}&=b-dS-\beta I(1+\upsilon I^{k-1})S+\delta R,\\[2ex]
\dot{I}&=\beta I(1+\upsilon I^{k-1})S-(d+\mu)I,\\[2ex]
\dot{R}&=\mu I-(d+\delta)R,
\end{aligned}
\right.
\end{array}
\end{equation}
where $\beta>0$, $\upsilon>0$ and $k>0$. To simplify the model, we add all equations in system \eqref{1.2} and denote the number of the total population by $N(t)$ ($N(t)=S(t)+I(t)+R(t)$). This yields the following equation:
\begin{equation*}
\begin{array}{ll}
\begin{aligned}
\dot{N}=b-dN.
\end{aligned}
\end{array}
\end{equation*}
Hence, $\lim\limits_{t\rightarrow\infty}N(t)=\frac{b}{d}\equiv\Lambda$. Therefore, the reduced system of \eqref{1.2} is as follows
\begin{equation}
\begin{array}{ll}\label{1.3}
\left\{
\begin{aligned}
\dot{I}&=\beta I(1+\upsilon I^{k-1})(\Lambda-I-R)-(d+\mu)I,\\[2ex]
\dot{R}&=\mu I-(d+\delta)R,
\end{aligned}
\right.
\end{array}
\end{equation}
and
\begin{equation}
\begin{array}{ll}\label{1.4}
\begin{aligned}
\Omega=\{(I,R)|I\geq0, R\geq0, I+R\leq\Lambda\}
\end{aligned}
\end{array}
\end{equation}
is a positive invariant set of system \eqref{1.3}.

By the change of variables $I=\frac{d+\delta}{\beta}x$, $R=\frac{d+\delta}{\beta}y$ and $t=\frac{1}{d+\delta}\tau$ (still denote $\tau$ by $t$), system \eqref{1.3} can be transformed into
\begin{equation}
\begin{array}{ll}\label{1.5}
\left\{
\begin{aligned}
\dot{x}&=x(1+px^{k-1})(\Lambda_0-x-y)-\gamma x,\\[2ex]
\dot{y}&=\eta x-y,
\end{aligned}
\right.
\end{array}
\end{equation}
where
\begin{equation*}
\begin{array}{ll}
\begin{aligned}
p=(\frac{d+\delta}{\beta})^{k-1}\upsilon,\quad\Lambda_0=\frac{\beta}{d+\delta}\Lambda,\quad\gamma=\frac{d+\mu}{d+\delta}\quad\eta=\frac{\mu}{d+\delta},
\end{aligned}
\end{array}
\end{equation*}
satisfy $p>0$, $\Lambda_0>0$ and $\gamma>\eta>0$. Note that $R_0:=\frac{\Lambda_0}{\gamma}$ represents the basic reproduction number in the epidemic model. Obviously, the positive invariant set of system \eqref{1.5} is as follows
\begin{equation*}
\begin{array}{ll}
\begin{aligned}
\bar{\Omega}=\{(x,y)|x\geq0, y\geq0, x+y\leq\Lambda_0\},
\end{aligned}
\end{array}
\end{equation*}
and the parameter space of system \eqref{1.5} is
\begin{equation*}
\begin{array}{ll}
\begin{aligned}
\Gamma=\{(p,\Lambda_0,\gamma,\eta,k)|p>0,\Lambda_0>0,\gamma>\eta>0,k>0\}.
\end{aligned}
\end{array}
\end{equation*}

The organization of this paper is as follows. The existence and types of equilibria for system \eqref{1.5} is discussed in section 2. In section 3, we analyze the degenerate equilibria. In Section 4, we show that system \eqref{1.5} undergoes saddle-node bifurcation, Bogdanov-Takens bifurcation and Nilpotent focus bifurcation under certain parameter conditions. In section 5, we study Hopf bifurcation of system \eqref{1.5}. The paper ends with a brief discussion of the results in section 6.

\section{Existence of equilibria}

In this section, the equilibria of system \eqref{1.5} is investigated.

\subsection{Disease free equilibrium}

From system \eqref{1.5}, it follows that $E_0(0,0)$ is a unique disease free equilibrium. By Theorem 7.1 of Chapter 2 in \cite{ZTW}, the following statements hold.
\begin{lemma}\label{l1}
The disease free equilibrium $E_0(0,0)$ (i.e., boundary equilibrium) is a stable node (saddle) if $R_0<1$ ($R_0>1$), see Fig.\ref{1}(a) (Fig.\ref{1}(b)). If $R_0=1$, then $E_0$ is a saddle-node with the parabolic sector on the right half-plane, see Fig.\ref{1}(c).
\end{lemma}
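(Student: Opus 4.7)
The plan is to linearize system \eqref{1.5} at $E_0$ and read off the local type from the Jacobian in the two hyperbolic regimes $R_0\neq 1$, then treat the critical case $R_0=1$ by a center-manifold reduction matched to Theorem 7.1 of Chapter 2 in \cite{ZTW}. The Jacobian at the origin is
\begin{equation*}
J(E_0)=\begin{pmatrix}\Lambda_0-\gamma & 0 \\ \eta & -1\end{pmatrix},
\end{equation*}
with eigenvalues $\lambda_1=\Lambda_0-\gamma=\gamma(R_0-1)$ and $\lambda_2=-1<0$. If $R_0<1$ both eigenvalues are negative and $E_0$ is a stable node (Fig.\ref{1}(a)); if $R_0>1$ they have opposite signs and $E_0$ is a saddle (Fig.\ref{1}(b)). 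Both cases are read off immediately from the spectrum.

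For the threshold $R_0=1$ (equivalently $\Lambda_0=\gamma$) the linearization has a zero eigenvalue, and I would block-diagonalize by the change of variables $u=x$, $v=y-\eta x$. This turns the linear part into $\operatorname{diag}(0,-1)$ and produces
\begin{equation*}
\dot u=-(1+\eta)u^2+p\gamma u^k-uv+O(u^{k+1})+O(u^2 v),\qquad \dot v=-v+\eta(1+\eta)u^2+\cdots .
\end{equation*}
The local center manifold is $v=\varphi(u)=\eta(1+\eta)u^2+O(u^3)$, and substituting back yields the scalar reduced equation
\begin{equation*}
\dot u=-(1+\eta)u^2+p\gamma u^k+O(u^3).
\end{equation*}
For $k>2$ the leading term is $-(1+\eta)u^2$, an even-order monomial with negative coefficient, and Theorem 7.1 of Chapter 2 in \cite{ZTW} then classifies $E_0$ as a saddle-node. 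Since $\dot u<0$ for small $u$ of either sign on the center manifold, orbits with $u>0$ approach $E_0$ tangentially to the center direction (parabolic sector) while orbits with $u<0$ are expelled (hyperbolic sector); hence the parabolic sector lies in the right half-plane, matching Fig.\ref{1}(c).

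The main obstacle I expect is the $k$-dependence of the leading nonlinearity on the center manifold: when $1<k<2$ the fractional-power term $p\gamma u^k$ dominates the quadratic and has positive coefficient, and when $k=2$ the combined coefficient $p\gamma-(1+\eta)$ can vanish, so these parameter windows could in principle flip the orientation of the parabolic sector or force a passage to cubic order. I would therefore verify that Lemma~\ref{l1} is being invoked in the range $k>2$ (or $k=2$ with $p\gamma<1+\eta$), and state the appropriate side conditions or higher-order subcases as needed.
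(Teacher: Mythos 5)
Your proposal is correct and is essentially the argument the paper leaves implicit: the paper gives no computation at all for this lemma, only the citation of Theorem 7.1 of Chapter 2 in \cite{ZTW}, and your linearization together with the center-manifold reduction is precisely what is needed to put the system in the form that theorem requires. Your closing caveat is well founded and is a defect of the lemma as stated rather than of your argument: since the reduced equation is $\dot u=-(1+\eta)u^2+p\gamma u^k+O(u^3)$, for $1<k<2$ the term $p\gamma u^k$ dominates and the flow on the center manifold is repelling for small $u>0$, so the parabolic sector cannot lie on the right, and for $k=2$ it lies on the right only when $p\gamma<1+\eta$; the conclusion as written therefore needs the hypothesis $k>2$ (or $k=2$ with $p\gamma<1+\eta$). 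Note also that your Jacobian, with $(1,1)$-entry $\Lambda_0-\gamma$, presupposes $k>1$: for $k=1$ that entry is $(1+p)\Lambda_0-\gamma$, so the threshold becomes $R_0=\frac{1}{1+p}$ (consistent with Case 2 of Section 2.2), and for $0<k<1$ the term $px^k$ is not differentiable at $x=0$; the lemma should be read as a statement for $k>1$ only.
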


\begin{figure}[ht!]
\begin{center}
\begin{overpic}[scale=0.70]{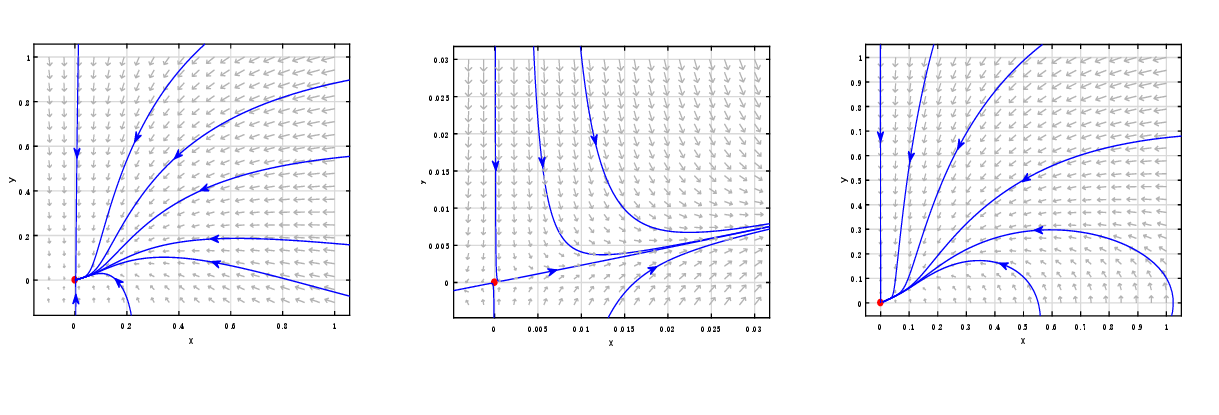}
\end{overpic}
\put(-396,47){$E_0$}
\put(-355,13){$(a)$}
\put(-251,46){$E_0$}
\put(-210,13){$(b)$}
\put(-125,42){$E_0$}
\put(-70,13){$(c)$}
\vspace{-5mm}
\end{center}
\caption{ The disease free equilibrium  $E_0$ of system \eqref{1.5}: (a) $E_0$ is a stable node for $R_0<1$; (b) $E_0$ is a saddle for $R_0>1$; (c) $E_0$ is a saddle-node for $R_0=1$.\label{1}}
\end{figure}

\subsection{Endemic equilibria}

Define
\begin{equation}
\begin{array}{ll}\label{2.2}
\begin{aligned}
H(x)\overset{\triangle}{=}(1+px^{k-1})\bigg(1-\frac{1+\eta}{\Lambda_0}x\bigg)-\frac{1}{R_0}=0,\quad x\in\big(0,\frac{\Lambda_0}{\eta+1}\big].
\end{aligned}
\end{array}
\end{equation}
Then
\begin{equation}
\begin{array}{ll}\label{2.4}
\begin{aligned}
H'(x)&=p(k-1)x^{k-2}-\frac{pk(1+\eta)}{\Lambda_0}x^{k-1}-\frac{1+\eta}{\Lambda_0},\\[2ex]
H''(x)&=p(k-1)x^{k-3}\big(k-2-\frac{k(1+\eta)}{\Lambda_0}x\big).
\end{aligned}
\end{array}
\end{equation}
In the following, we discuss the endemic equilibria (i.e. positive equilibria) of system \eqref{1.5} in 5 cases.
First, we consider the number of positive zeros of $H(x)$.

\textbf{Case 1:} $0<k<1$.
We have $H'(x)<0$ for all $x\in(0,\frac{\Lambda_0}{\eta+1})$. Besides, $\lim\limits_{x\rightarrow0^+}H(x)\rightarrow+\infty$ and $\lim\limits_{x\rightarrow\frac{\Lambda_0}{\eta+1}^-}H(x)\rightarrow-\frac{1}{R_0}$. Thus, $H(x)$ has a positive zero $x_0^*$.

\textbf{Case 2: $k=1$}.
We have $H(x)=(1+p)\big(1-\frac{1+\eta}{\Lambda_0}x\big)-\frac{1}{R_0}$ and $H(x)$ has a unique positive zero $x_1^*=\frac{\Lambda_0+p\Lambda_0-\gamma}{(1+p)(1+\eta)}$ when $R_0>\frac{1}{1+p}$.

\textbf{Case 3:} $1<k<2$.
For $1<k<2$, we have $\lim\limits_{x\rightarrow0^+}H'(x)=+\infty$ and  $\lim\limits_{x\rightarrow \frac{\Lambda_0}{\eta+1}^-}H'(x)=-\frac{(1+\eta)^{k-1}+p\Lambda_0^{k-1}}{\Lambda_0(1+\eta)^{k-2}}$. Thus, there exists $x_c\in(0,\frac{\Lambda_0}{1+\eta})$ such that $H'(x_c)=0$. Moreover, $\lim\limits_{x\rightarrow0^+}H(x)=1-\frac{1}{R_0}$ and $\lim\limits_{x\rightarrow\frac{\Lambda_0}{1+\eta}^-}H(x)=-\frac{1}{R_0}$. For $R_0<1$, $H(x)$ has a unique positive zero $x_c$  when $H(x_c)=0$. For $R_0<1$, $H(x)$ has two positive zeros $\bar{x}_1^*$ and $\bar{x}_2^*$ (where $\bar{x}_1^*<\bar{x}_2^*$) when $H(x_c)>0$. For $R_0\geq1$, $H(x)$ has a unique positive zero $x_2^*$  for $x_2^*\in (0,\frac{\Lambda_0}{1+\eta})$ when $H(x_c)>0$.

\textbf{Case 4:} $k=2$.
Define $\Delta:=(p\Lambda_0-\eta-1)^2-4(p+p\eta)(\gamma-\Lambda_0).$ For $R_0<1$, if $p>\frac{1+\eta}{\Lambda_0}$ and $\gamma<\frac{(1+\eta+p\Lambda_0)^2}{4p(1+\eta)}$, then $H(x)$ has two positive zeros $x_{01}^*=\frac{p\Lambda_0-\eta-1-\sqrt{\Delta}}{2p(1+\eta)}$ and $x_{02}^*=\frac{p\Lambda_0-\eta-1+\sqrt{\Delta}}{2p(1+\eta)}$.
For $R_0>1$, $H(x)$ has a unique positive zero $x_{02}^*$.
For $\gamma=\frac{(1+\eta+p\Lambda_0)^2}{4p(1+\eta)}$ and $p>\frac{1+\eta}{\Lambda_0}$, $H(x)$ has a unique positive zero $\bar{x}_{0}^*=\frac{p\Lambda_0-\eta-1}{2p(1+\eta)}$.

\begin{figure}[ht!]
\centering
\begin{subfigure}{0.325\linewidth}
\centering
\includegraphics[width=0.9\linewidth]{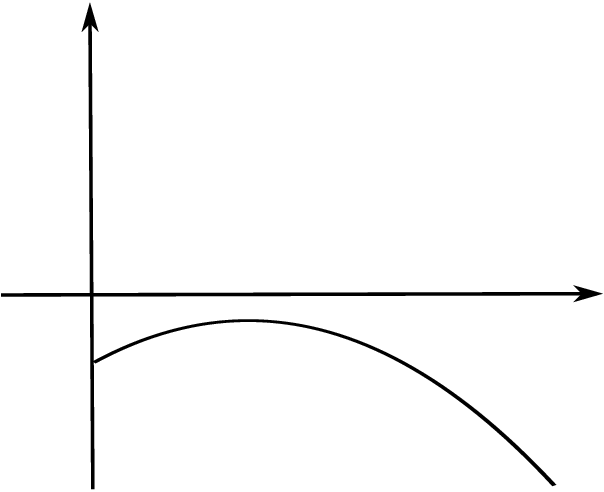}
\put(-112,33){$0$}
\put(-3,34){$x$}
\put(-113,95){$y$}
\caption{\centering{$H'(\bar{x}_c)<0$}}
\label{q1}
\end{subfigure}
\centering
\begin{subfigure}{0.325\linewidth}
\centering
\includegraphics[width=0.9\linewidth]{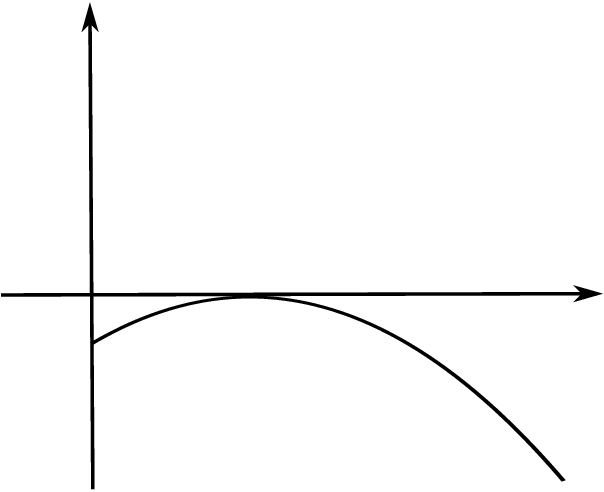}
\put(-113,33){$0$}
\put(-7,34){$x$}
\put(-114,95){$y$}
\put(-80,33){$\bar{x}_c$}
\caption{\centering{$H'(\bar{x}_c)=0$}}
\label{q2}
\end{subfigure}
\centering
\begin{subfigure}{0.325\linewidth}
\centering
\includegraphics[width=0.9\linewidth]{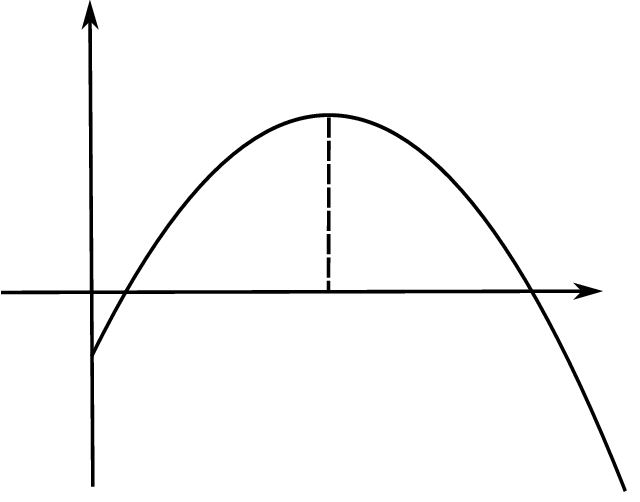}
\put(-113,33){$0$}
\put(-10,36){$x$}
\put(-114,91){$y$}
\put(-100,35){$x_{01}$}
\put(-33,35){$x_{02}$}
\put(-63,34){$\bar{x}_c$}
\caption{\centering{$H'(\bar{x}_c)>0$}}
\label{q3}
\end{subfigure}
\captionsetup{justification=centering}
\caption{The curve of $H'(x)$ when $k>2$.}
\label{L4}
\end{figure}

\textbf{Case 5:} $k>2$. From \eqref{2.4}, we have $\lim\limits_{x\rightarrow0^+}H'(x)=-\frac{1+\eta}{\Lambda_0}$, $\lim\limits_{x\rightarrow \frac{\Lambda_0}{1+\eta}^-}H'(x)=-\frac{(1+\eta)^{k-1}+p\Lambda_0^{k-1}}{\Lambda_0(1+\eta)^{k-2}}<0$, $H''(x)$ has a unique positive zero $\bar{x}_c=\frac{\Lambda_0}{1+\eta}\frac{k-2}{k}$
and $H''(x)>0$ if $x<\bar{x}_c$ and $H''(x)<0$ if $\bar{x}_c<x<\frac{\Lambda_0}{1+\eta}$. In addition,
\begin{equation}
\begin{array}{ll}\label{2.7}
\begin{aligned}
H'(\bar{x}_c)=p\bigg(\frac{\Lambda_0}{1+\eta}\bigg)^{k-2}\bigg(\frac{k-2}{k}\bigg)^{k-2}-\frac{1+\eta}{\Lambda_0}.
\end{aligned}
\end{array}
\end{equation}
If $H'(\bar{x}_c)<0$, then $H'(x)$ has no zero for $x\in (0,\frac{\Lambda_0}{1+\eta}]$, see Fig.\ref{L4}(a). If $H'(\bar{x}_c)=0$, then $H'(x)$ has a unique zero $\bar{x}_c$, see Fig.\ref{L4}(b) and  $H'(x)$ has two zeros $x_{01}$ and $x_{02}$ for $x\in (0,\frac{\Lambda_0}{1+\eta}]$ ($x_{01}<x_{02}$) as  $H'(\bar{x}_c)>0$, see Fig.\ref{L4}(c). Hence, $H(x)$ has at most three positive zeros for $x\in (0,\frac{\Lambda_0}{1+\eta}]$.

\begin{figure}[ht!]
\centering
\begin{subfigure}{0.325\linewidth}
\centering
\includegraphics[width=0.8\linewidth]{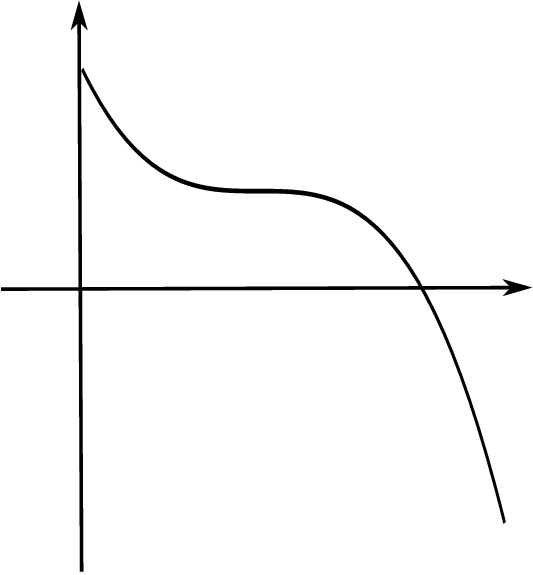}
\put(-101,51){$0$}
\put(-5,52){$x$}
\put(-102,113){$y$}
\put(-32,51){$x_4^*$}
\vspace{3mm}\caption{\centering{$R_0>1$}}
\label{w15}
\vspace{3mm}
\end{subfigure}
\centering
\begin{subfigure}{0.325\linewidth}
\centering
\includegraphics[width=0.8\linewidth]{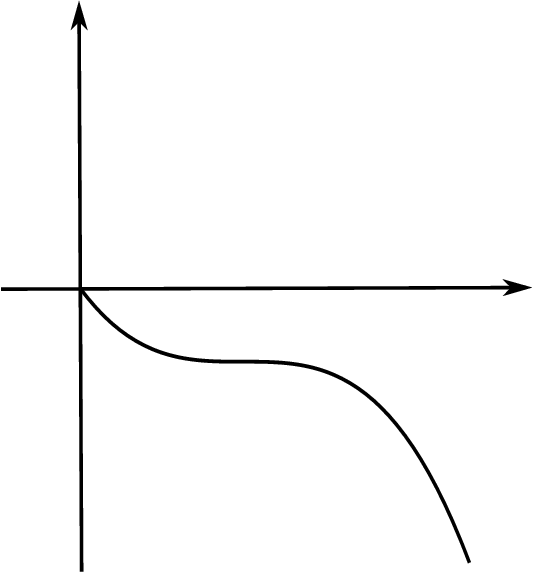}
\put(-101,51){$0$}
\put(-5,52){$x$}
\put(-102,113){$y$}
\vspace{4.3mm}\caption{\centering{$R_0=1$}}
\label{w16}
\vspace{3mm}
\end{subfigure}
\centering
\begin{subfigure}{0.325\linewidth}
\centering
\includegraphics[width=0.8\linewidth]{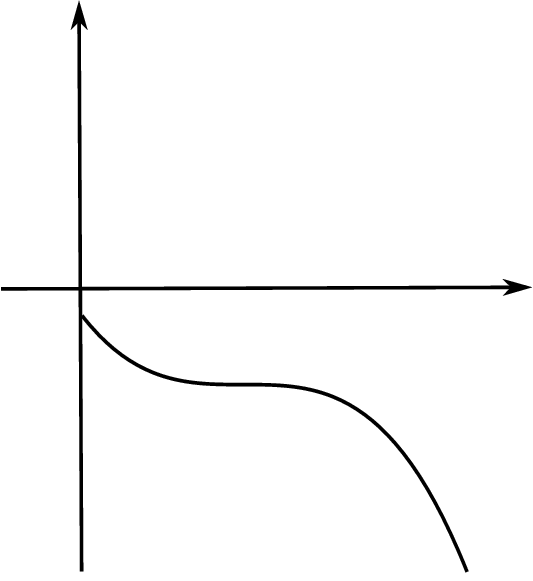}
\put(-101,52){$0$}
\put(-5,53){$x$}
\put(-102,113){$y$}
\vspace{3mm}\caption{\centering{$R_0<1$}}
\label{w17}
\vspace{3mm}
\end{subfigure}
\captionsetup{justification=centering}
\caption{The curve of $H(x)$ when $H'(\bar{x}_c)\leq0$.}
\label{L0}
\end{figure}

\textbf{(i):} For $H'(\bar{x}_c)\leq0  $ and $R_0>1$, $H(x)$ has a unique positive zero $x_4^*$, see Fig.\ref{L0}(a).

\begin{figure*}[ht!]
\centering
\begin{subfigure}{0.235\textwidth}
\centering
\includegraphics[width=1\linewidth]{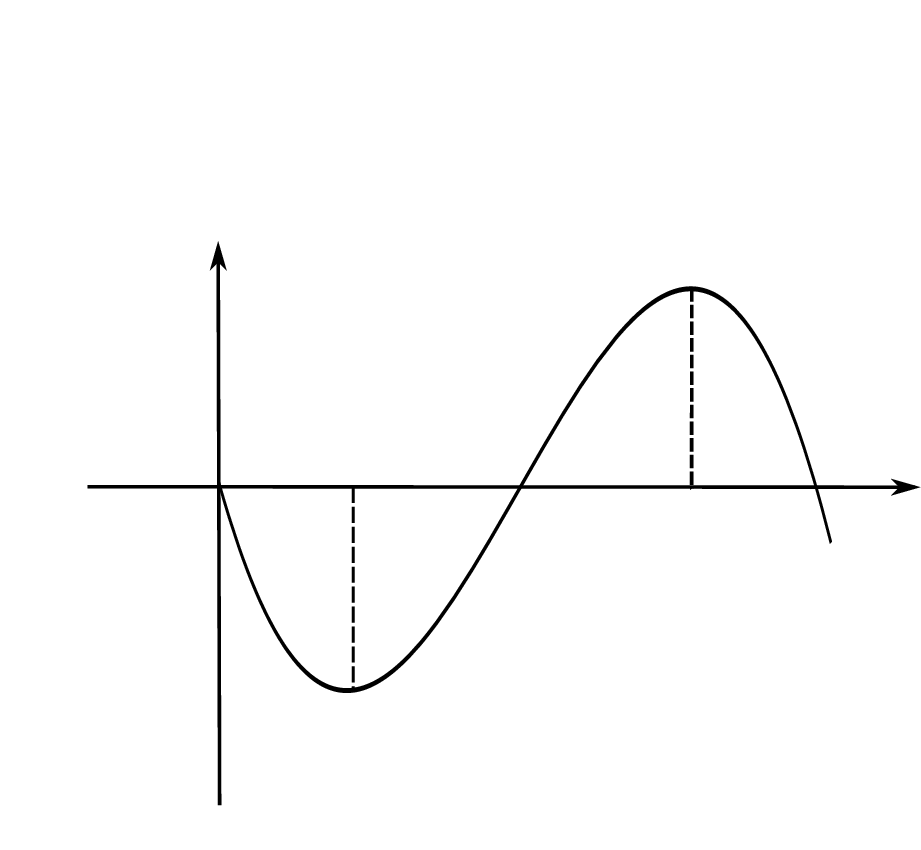}
\put(-82,32){$0$}
\put(-1,34){$x$}
\put(-83,63){$y$}
\put(-66,42){$x_{01}$}
\put(-33,34){$x_{02}$}
\put(-44,32){$\hat{x}_1^*$}
\put(-11,43){$\hat{x}_2^*$}
\caption{\centering{$R_0=1$}}
\vspace{2mm}
\end{subfigure}  
\begin{subfigure}{0.235\textwidth}
\centering
\includegraphics[width=1\linewidth]{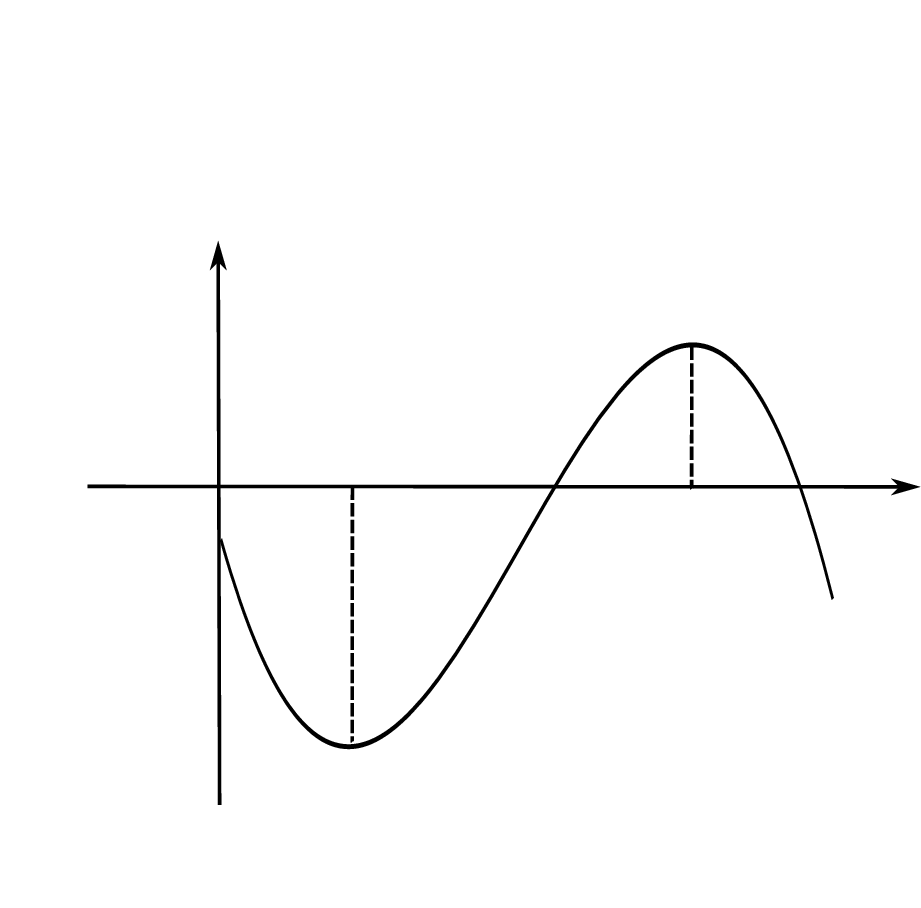}
\put(-82,38){$0$}
\put(-2,40){$x$}
\put(-83,70){$y$}
\put(-66,48){$x_{01}$}
\put(-31,40){$x_{02}$}
\put(-48,49){$\hat{x}_1^*$}
\put(-14,49){$\hat{x}_2^*$}
\caption{\centering{$R_0<1$}}
\vspace{3mm}
\end{subfigure}
\centering
\begin{subfigure}{0.235\textwidth}
\centering
\includegraphics[width=1\linewidth]{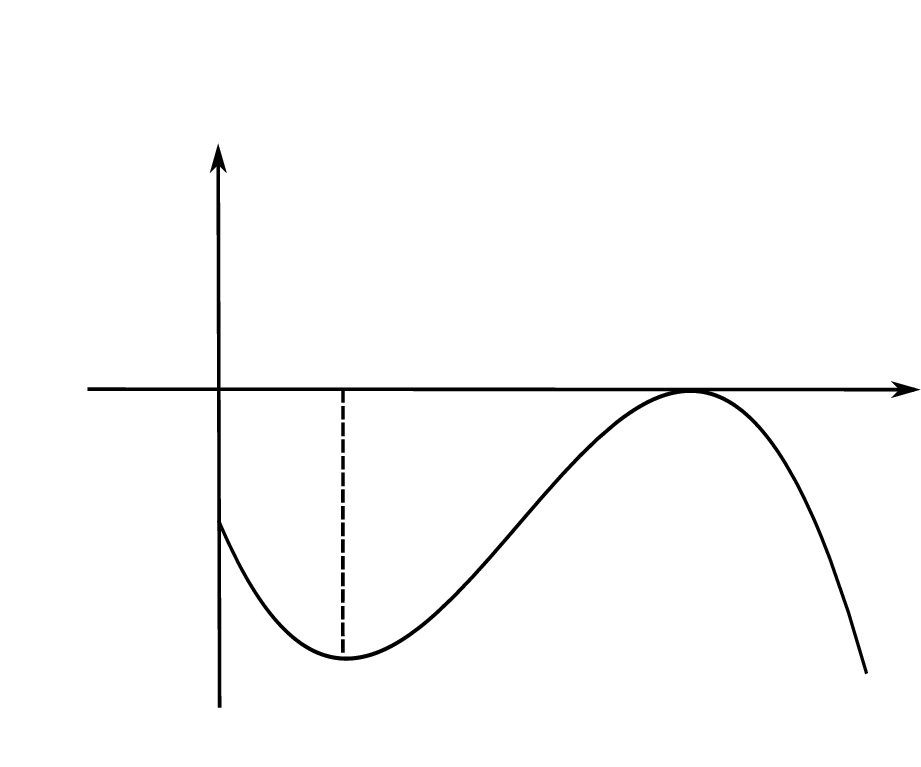}
\put(-82,33){$0$}
\put(-6,35){$x$}
\put(-83,67){$y$}
\put(-66,44){$x_{01}$}
\put(-48,45){$\hat{x}_5^*=x_{02}$}
\caption{\centering{$R_0<1$}}
\end{subfigure}   
\begin{subfigure}{0.235\textwidth}
\centering
\includegraphics[width=1\linewidth]{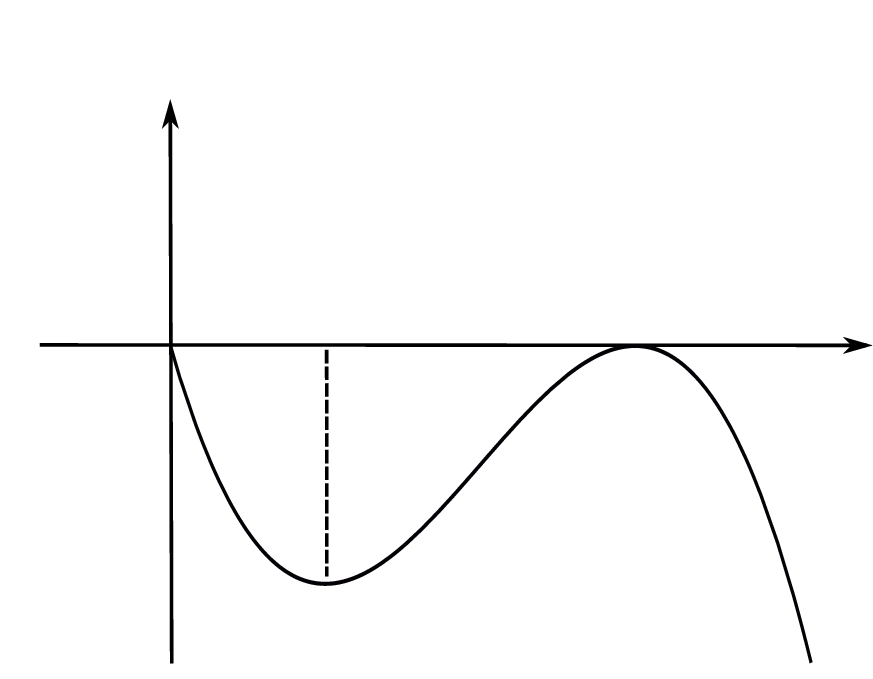}
\put(-86,31){$0$}
\put(-6,33){$x$}
\put(-86,64){$y$}
\put(-68,41){$x_{01}$}
\put(-48,41){$\hat{x}_5^*=x_{02}$}
\caption{\centering{$R_0=1$}}
\end{subfigure}
\captionsetup{justification=centering}
\caption{Graph of $H(x)$ for $H'(\bar{x}_c)>0$ and $R_0\leq1$.}
\label{L3}
\end{figure*}

\textbf{(ii):} For $H'(\bar{x}_c)>0$ and $R_0\leq1$, if $H(x_{01})<1-\frac{1}{R_0}\leq0<H(x_{02})$, then $H(x)$ has two positive zeros $\hat{x}_1^*$ and $\hat{x}_2^*$ ($\hat{x}_1^*<\hat{x}_2^*$), see Fig.\ref{L3}(a) and Fig.\ref{L3}(b). If $H(x_{01})<1-\frac{1}{R_0}\leq H(x_{02})=0$, then $H(x)$ has a unique positive zero $\hat{x}_5^*$, see Fig.\ref{L3}(c) and Fig.\ref{L3}(d).

\textbf{(iii):} For $H'({\bar{x}_c})>0$ and $R_0>1$, the following statements hold.

\quad\textbf{(iii-1):} If $H(x_{01})<0<H(x_{02})<1-\frac{1}{R_0}$ or $H(x_{01})<0<1-\frac{1}{R_0}<H(x_{02})$ or $H(x_{01})<0<1-\frac{1}{R_0}=H(x_{02})$, then $H(x)$ has three positive zeros $\bar{x}_3^*$, $\bar{x}_4^*$ and $\bar{x}_5^*$ ($\bar{x}_3^*<\bar{x}_4^*<\bar{x}_5^*$), see Fig.\ref{L1}(a), Fig.\ref{L1}(b) and Fig.\ref{L1}(c).

\quad\textbf{(iii-2):} If $H(x_{01})=0<H(x_{02})<1-\frac{1}{R_0}$ or $H(x_{01})=0<1-\frac{1}{R_0}<H(x_{02})$ or $H(x_{01})=0<1-\frac{1}{R_0}=H(x_{02})$, then $H(x)$ has two positive zeros $x_{30}^*$ and $\bar{x}_5^*$, see Fig.\ref{L1}(d), Fig.\ref{L1}(e) and Fig.\ref{L1}(f).

\quad\textbf{(iii-3):} If $H(x_{01})<0=H(x_{02})<1-\frac{1}{R_0}$, then $H(x)$ has two positive zeros $x_{40}^*$ and $\bar{x}_3^*$, see Fig.\ref{L1}(g).

\quad\textbf{(iii-4):} If $0<H(x_{01})<1-\frac{1}{R_0}=H(x_{02})$ or $0<H(x_{01})<1-\frac{1}{R_0}<H(x_{02})$ or $0<H(x_{01})<H(x_{02})<1-\frac{1}{R_0}$, then $H(x)$ has a unique positive zero $\hat{x}_3^*$, see Fig.\ref{L1}(h), Fig.\ref{L1}(i) and Fig.\ref{L1}(j).

\quad\textbf{(iii-5):} If $H(x_{01})<H(x_{02})<0<1-\frac{1}{R_0}$, then $H(x)$ has a unique positive zero $\hat{x}_4^*$, see Fig.\ref{L1}(k).

\begin{figure*}[ht!]
\centering
\begin{subfigure}{0.235\textwidth}
\centering
\includegraphics[width=1\linewidth]{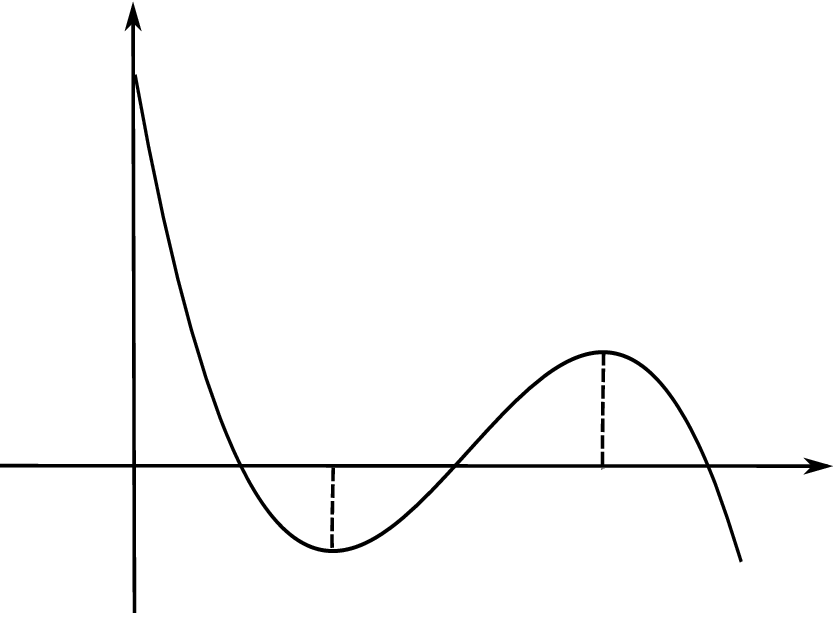}
\put(-91,12){$0$}
\put(-5,14){$x$}
\put(-91,71){$y$}
\put(-66,22){$x_{01}$}
\put(-35,14){$x_{02}$}
\put(-81,13){$\bar{x}_3^*$}
\put(-46,13){$\bar{x}_4^*$}
\put(-15,23){$\bar{x}_5^*$}
\put(-50,-6){$(a)$}
\end{subfigure}   
\begin{subfigure}{0.235\textwidth}
\centering
\includegraphics[width=\linewidth]{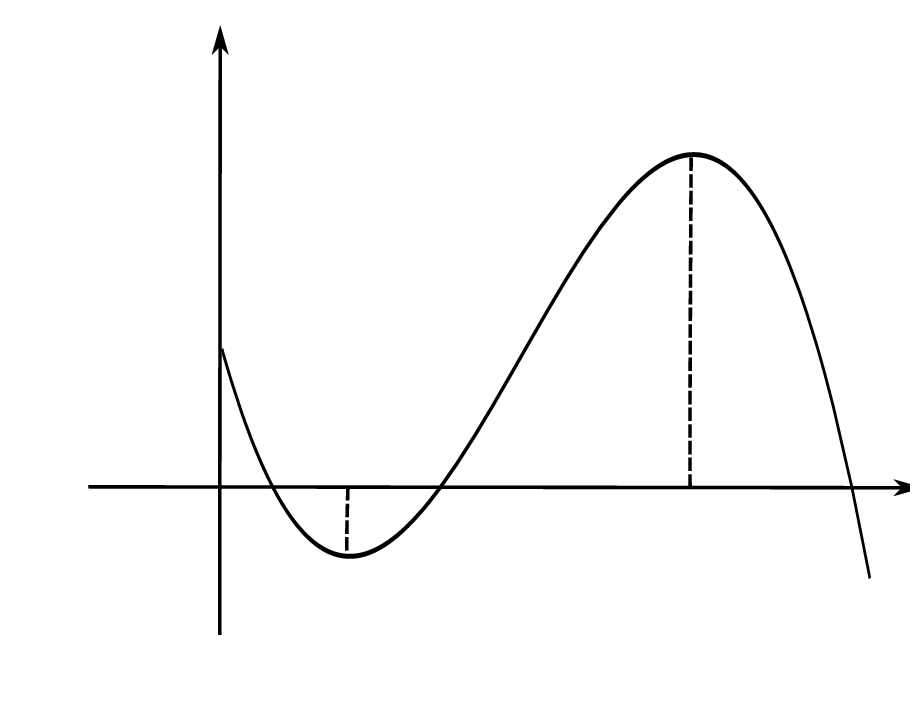}
\put(-82,17){$0$}
\put(-3,19){$x$}
\put(-82,71){$y$}
\put(-66,27){$x_{01}$}
\put(-33,19){$x_{02}$}
\put(-76,16){$\bar{x}_3^*$}
\put(-52,17){$\bar{x}_4^*$}
\put(-7,27){$\bar{x}_5^*$}
\put(-50,-6){$(b)$}
\vspace{-2mm}
\end{subfigure}
\centering
\begin{subfigure}{0.235\textwidth}
\centering
\includegraphics[width=1\linewidth]{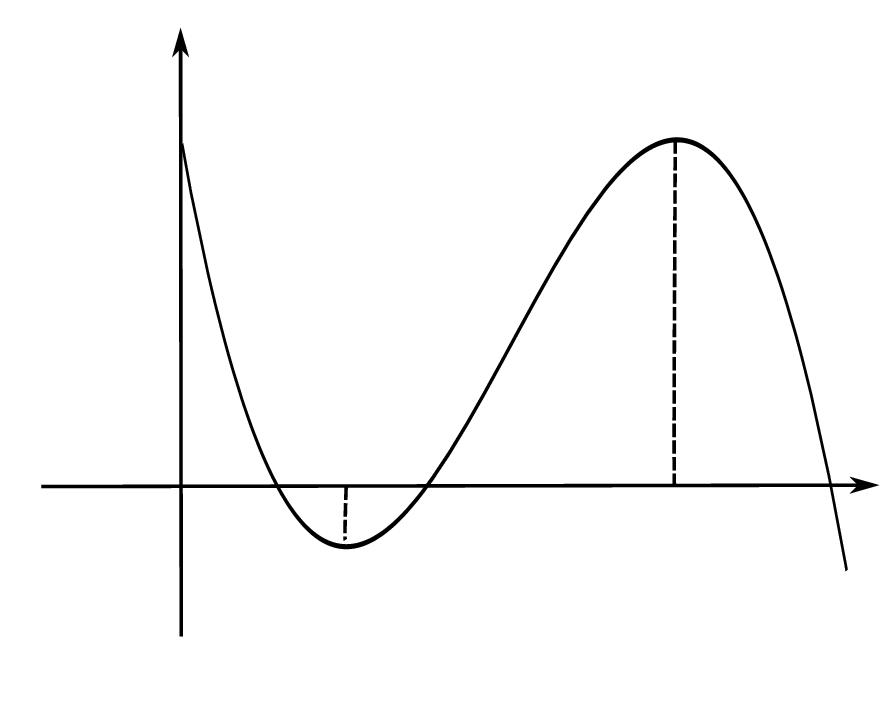}
\put(-86,17){$0$}
\put(-3,19){$x$}
\put(-86,71){$y$}
\put(-66,27){$x_{01}$}
\put(-33,19){$x_{02}$}
\put(-77,17){$\bar{x}_3^*$}
\put(-52,17){$\bar{x}_4^*$}
\put(-7,27){$\bar{x}_5^*$}
\put(-50,-6){$(c)$}
\end{subfigure}   
\begin{subfigure}{0.235\textwidth}
\centering
\includegraphics[width=\linewidth]{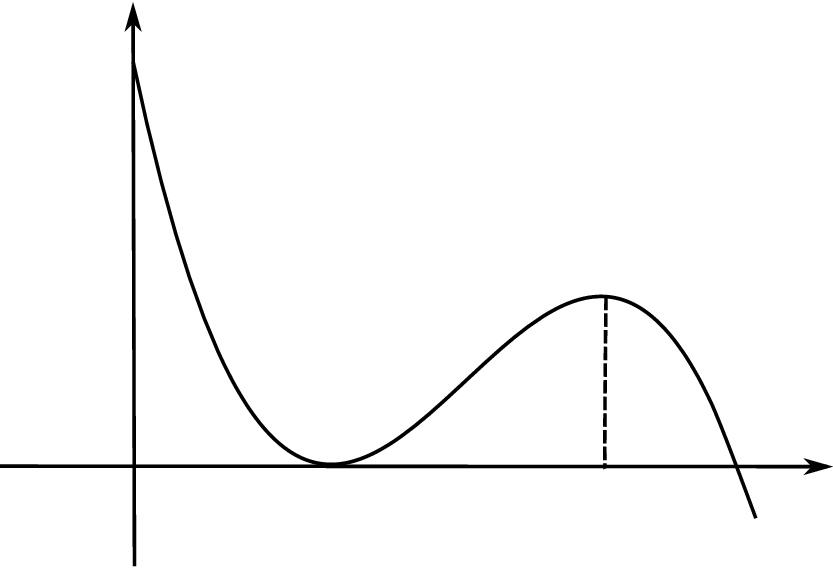}
\put(-90,5){$0$}
\put(-4,7){$x$}
\put(-90,65){$y$}
\put(-68,18){$x_{01}$}
\put(-35,7){$x_{02}$}
\put(-68,5){$x_{30}^*$}
\put(-13,16){$\bar{x}_5^*$}
\put(-50,-20){$(d)$}
\end{subfigure}

\begin{subfigure}{0.235\textwidth}
\centering
\includegraphics[width=\linewidth]{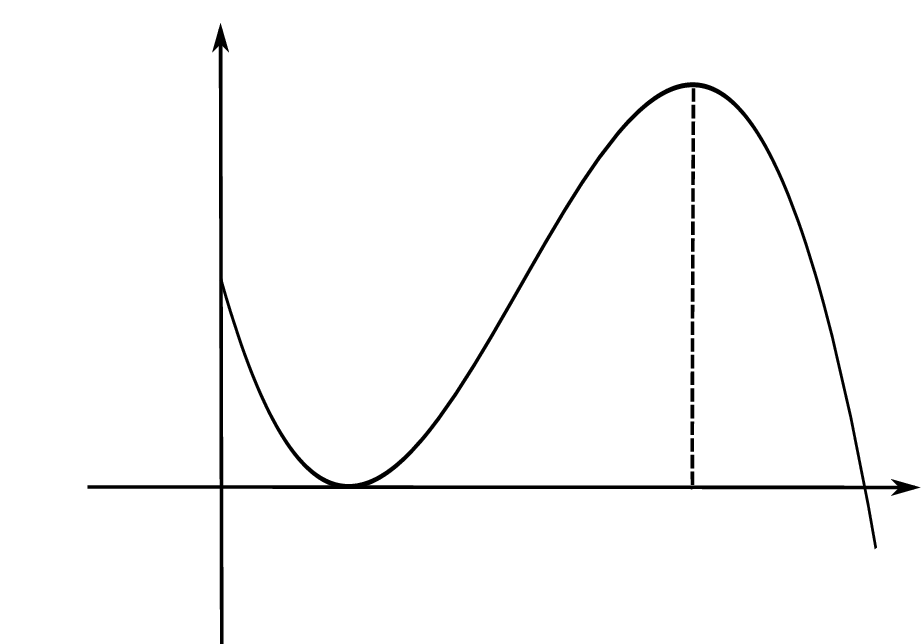}
\put(-83,9){$0$}
\put(-4,10){$x$}
\put(-83,62){$y$}
\put(-70,22){$x_{01}$}
\put(-33,11){$x_{02}$}
\put(-70,9){$x_{30}^*$}
\put(-7,21){$\bar{x}_5^*$}
\put(-50,-20){$(d)$}
\end{subfigure}
\begin{subfigure}{0.235\textwidth}
\centering
\includegraphics[width=\linewidth]{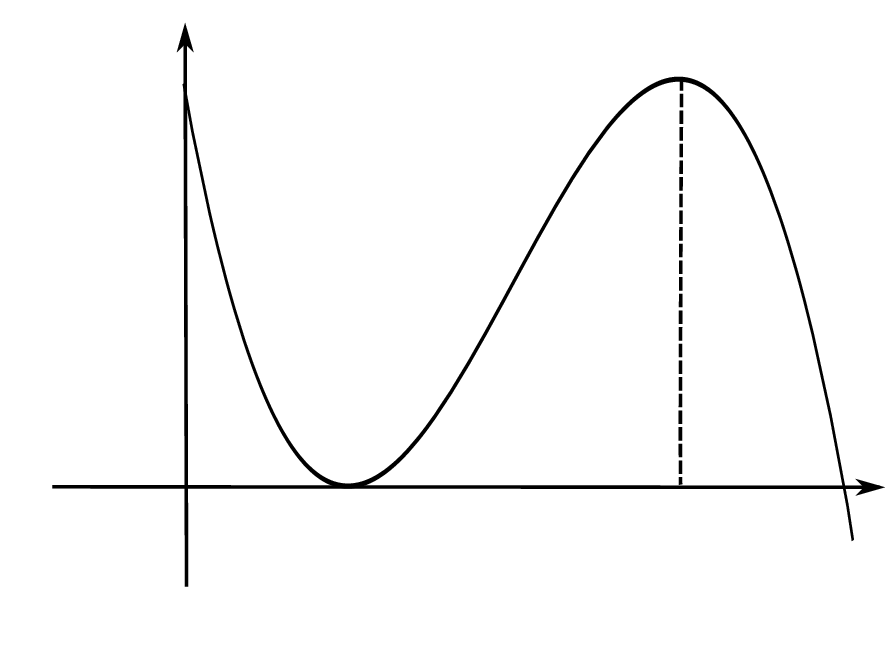}
\put(-85,10){$0$}
\put(-4,10){$x$}
\put(-85,66){$y$}
\put(-31,12){$x_{02}$}
\put(-68,23){$x_{01}$}
\put(-68,10){$x_{30}^*$}
\put(-5,21){$\bar{x}_5^*$}
\put(-50,-20){$(f)$}
\end{subfigure}
\begin{subfigure}{0.235\textwidth}
\centering
\includegraphics[width=\linewidth]{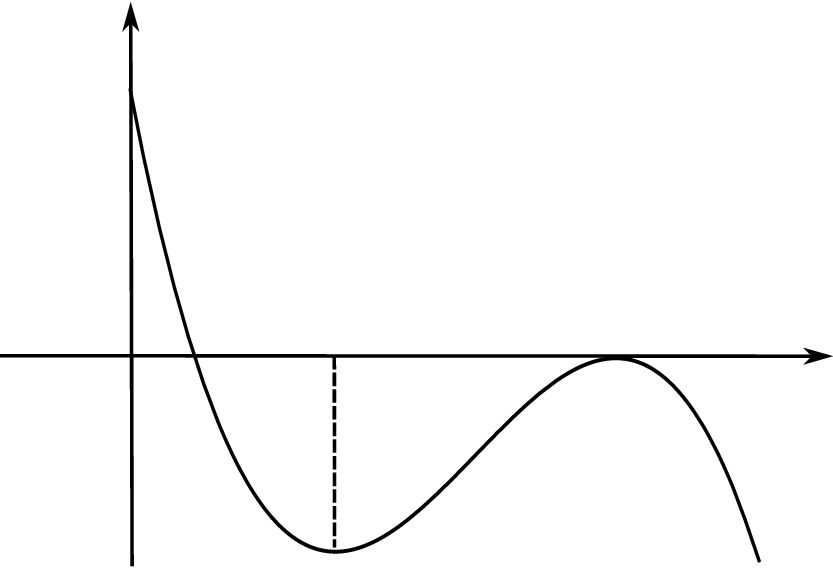}
\put(-90,18){$0$}
\put(-4,20){$x$}
\put(-91,62){$y$}
\put(-59,19){$x_{01}$}
\put(-34,18){$x_{02}$}
\put(-35,30){$x_{40}^*$}
\put(-77,28){$\bar{x}_3^*$}
\put(-50,-25){$(g)$}
\end{subfigure}
\begin{subfigure}{0.235\textwidth}
\centering
\includegraphics[width=\linewidth]{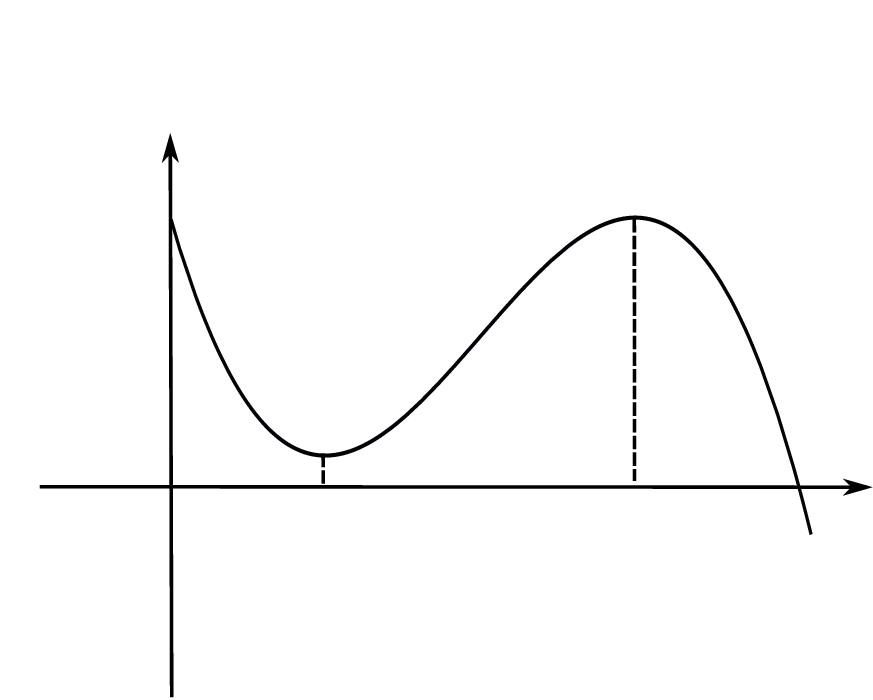}
\put(-86,17){$0$}
\put(-4,19){$x$}
\put(-86,62){$y$}
\put(-68,19){$x_{01}$}
\put(-34,19){$x_{02}$}
\put(-10,28){$\hat{x}_3^*$}
\put(-50,-15){$(h)$}
\end{subfigure}

\begin{subfigure}{0.235\textwidth}
\centering
\includegraphics[width=\linewidth]{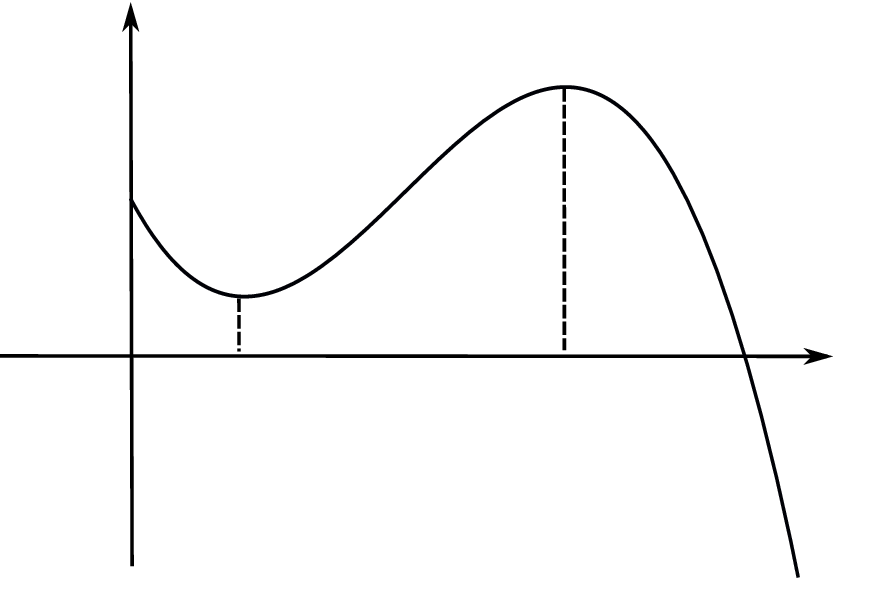}
\put(-91,19){$0$}
\put(-8,21){$x$}
\put(-92,62){$y$}
\put(-79,22){$x_{01}$}
\put(-43,22){$x_{02}$}
\put(-15,31){$\hat{x}_3^*$}
\put(-50,-20){$(i)$}
\end{subfigure}
\begin{subfigure}{0.235\textwidth}
\centering
\includegraphics[width=\linewidth]{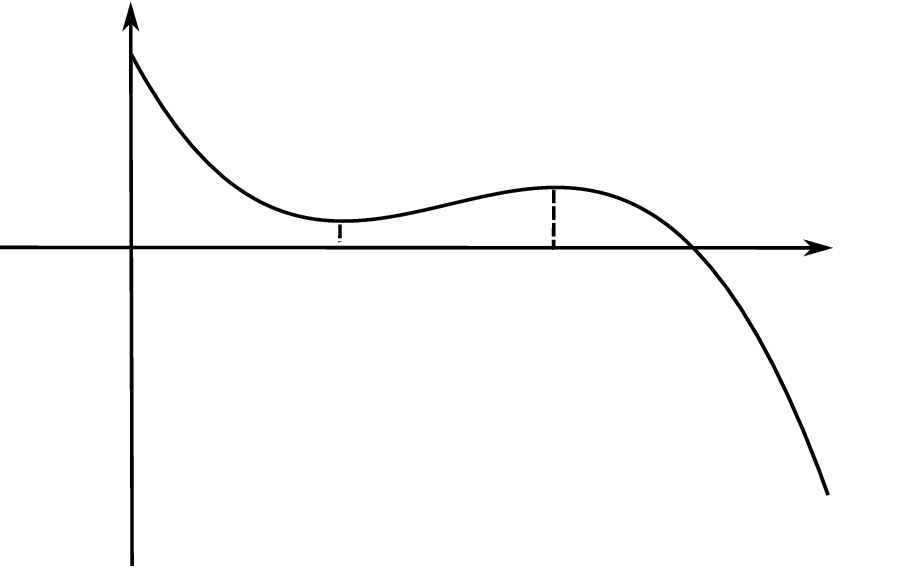}
\put(-92,28){$0$}
\put(-13,30){$x$}
\put(-92,59){$y$}
\put(-71,30){$x_{01}$}
\put(-43,30){$x_{02}$}
\put(-25,39){$\hat{x}_3^*$}
\put(-50,-25){$(j)$}
\end{subfigure}
\begin{subfigure}{0.235\textwidth}
\centering
\includegraphics[width=\linewidth]{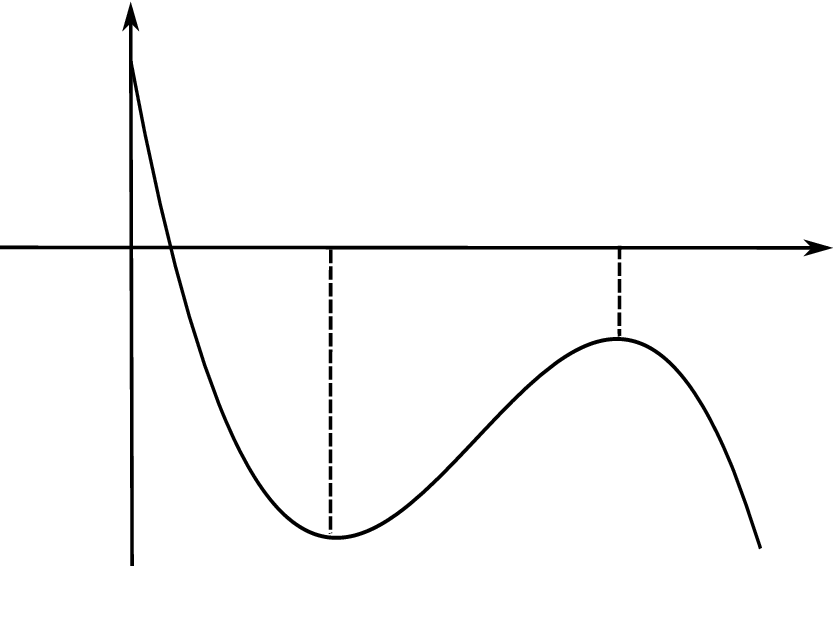}
\put(-90,37){$0$}
\put(-6,39){$x$}
\put(-91,70){$y$}
\put(-68,48){$x_{01}$}
\put(-35,48){$x_{02}$}
\put(-80,48){$\hat{x}_4^*$}
\put(-50,-18){$(k)$}
\end{subfigure}

\captionsetup{justification=centering}
\caption{Graph of $H(x)$ for $H'(\bar{x}_c)>0$ and $R_0>1$.}
\label{L1}
\end{figure*}
\begin{figure}[ht!]
\begin{center}
\begin{overpic}[scale=0.60]{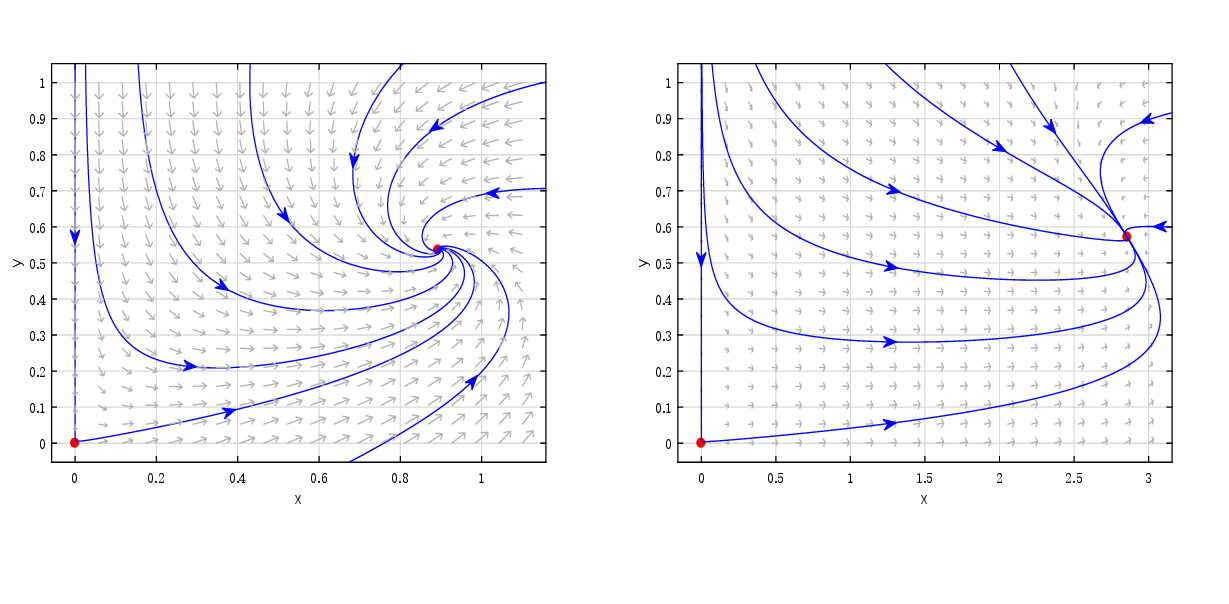}
\end{overpic}
\put(-330,46){$E_0$}
\put(-225,96){$E_1^*$}
\put(-271,13){$(a)$}
\put(-150,45){$E_0$}
\put(-29,97){$E_1^*$}
\put(-91,13){$(b)$}
\vspace{-5mm}
\end{center}
\caption{ When $k=1$, system \eqref{1.5} exists a boundary equilibrium $E_0$ and a unique positive equilibrium $E_1^*(x_1^*,\eta x_1^*)$. (a) $E_1^*$ is a stable focus; (b) $E_1^*$ is a stable node.\label{1-1}}
\end{figure}
The Jacobian matrix of system \eqref{1.5} at the positive equilibrium $E(x,y)\ (H(x)=0)$ is
\begin{equation*}
J(E)=\left(
\begin{array}{cc}
\Lambda_0H(x)+\Lambda_0xH'(x)+\frac{\gamma\eta x}{\Lambda_0-(1+\eta)x}& -\frac{\gamma x}{\Lambda_0-(1+\eta)x}\big)\\
\eta& -1
\end{array}
\right)
\end{equation*}
and
\begin{equation*}
\begin{array}{ll}
\begin{aligned}
Tr(J(E))&=\Lambda_0H(x)+\Lambda_0xH'(x)+\frac{\gamma\eta x}{\Lambda_0-(1+\eta)x}-1,\\[2ex]
Det(J(E))&=-\Lambda_0H(x)-\Lambda_0xH'(x).
\end{aligned}
\end{array}
\end{equation*}

From above analysis, we have the following results.

\begin{lemma}\label{l2}
For $k\leq1$, the following statements for system.

(i) When $0<k<1$, system \eqref{1.5} has a unique positive equilibrium $E_0^*(x_0^*,\eta x_0^*)$, which is stable node.

(ii) When $k=1$ and $\Lambda_0>\frac{\gamma}{1+p}$, system \eqref{1.5} has unique equilibrium $E_1^*(x_1^*,\eta x_1^*)$, which is a stable focus or node, see Fig.\ref{1-1}.
\end{lemma}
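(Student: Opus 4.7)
The plan is to exploit the Jacobian formulae displayed just before the lemma together with the equilibrium identity $H(x^*)=0$. Existence and uniqueness of the positive equilibria are already provided by Case 1 and Case 2 of the preceding analysis: for $0<k<1$, strict monotonicity of $H$ on $(0,\Lambda_0/(1+\eta))$ with $H(0^+)=+\infty$ and $H(\Lambda_0/(1+\eta)^-)=-1/R_0<0$ forces a unique positive zero $x_0^*$; for $k=1$, $H$ is affine and yields the explicit zero $x_1^*$ precisely when $\Lambda_0>\gamma/(1+p)$. Hence the substance of the proof lies in the stability claims.

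Introduce $U(x)=1+px^{k-1}$ and $V(x)=\Lambda_0-(1+\eta)x$, so that $H(x^*)=0$ is equivalent to $U(x^*)V(x^*)=\gamma$, which collapses the fraction in $Tr(J(E))$ to $\eta x^*U(x^*)$. Applying the product rule to $H=UV/\Lambda_0-1/R_0$ and multiplying by $x^*$ gives $\Lambda_0 x^*H'(x^*)=p(k-1)(x^*)^{k-1}V(x^*)-(1+\eta)x^*U(x^*)$. Substituting these into the formulas for $Tr(J(E))$ and $Det(J(E))$ reduces them to
\begin{equation*}
Tr(J(E))=p(k-1)(x^*)^{k-1}V(x^*)-x^*U(x^*)-1,\qquad Det(J(E))=-p(k-1)(x^*)^{k-1}V(x^*)+(1+\eta)x^*U(x^*).
\end{equation*}
Because $x^*\in(0,\Lambda_0/(1+\eta))$ one has $U(x^*),V(x^*)>0$, and for $k\le 1$ the factor $p(k-1)(x^*)^{k-1}V(x^*)$ is non-positive. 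Consequently $Tr(J(E))<0$ and $Det(J(E))>0$, which already establishes local asymptotic stability of both $E_0^*$ and $E_1^*$.

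To distinguish node from focus, set $C=p(1-k)(x^*)^{k-1}V(x^*)\ge 0$ and $B=x^*U(x^*)>0$, so that $Tr=-(C+B+1)$ and $Det=C+(1+\eta)B$, whence a short calculation yields
\begin{equation*}
Tr^2-4\,Det=(C+B-1)^2-4\eta B.
\end{equation*}
For part (ii), $k=1$ forces $C=0$, and both signs of $(B-1)^2-4\eta B$ are attained in the admissible parameter region, giving the focus/node alternative illustrated in Fig.~\ref{1-1}. For part (i) with $0<k<1$, the routine algebraic collapse of $Tr$ and $Det$ via $UV=\gamma$ is straightforward, but the main obstacle is verifying $(C+B-1)^2\ge 4\eta B$; this requires combining the strict positivity of $C$ with the equation $UV=\gamma$ to eliminate a variable and turn the discriminant condition into a parameter inequality, since the sign bookkeeping used for $Tr$ and $Det$ alone does not suffice to rule out complex eigenvalues.
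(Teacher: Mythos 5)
Your reductions of $Tr(J(E))$ and $Det(J(E))$ via $U(x^*)V(x^*)=\gamma$ are correct and agree with what the paper's displayed formulas give after setting $H(x^*)=0$ (direct computation of the Jacobian confirms $Tr=p(k-1)(x^*)^{k-1}V-x^*U-1$ and $Det=-p(k-1)(x^*)^{k-1}V+(1+\eta)x^*U$). The existence and uniqueness part, the sign argument yielding $Tr<0$ and $Det>0$ for $k\le 1$ (hence local asymptotic stability), and all of part (ii) are sound. The paper itself supplies no more than this: it records the $Tr$/$Det$ formulas and asserts the lemma ``from the above analysis,'' so your derivation is, if anything, the more explicit of the two.

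The genuine gap is the one you flag yourself in part (i): the claim ``stable node'' requires $Tr^2-4Det=(C+B-1)^2-4\eta B\ge 0$, and you leave this unverified. In fact it cannot be verified, because the inequality fails for admissible parameters. Since $C=p(1-k)(x^*)^{k-1}V(x^*)\to 0$ as $k\to 1^-$ while the equilibrium varies continuously, any parameter choice for which the $k=1$ equilibrium is a focus persists as a focus for $k$ slightly below $1$. Concretely, take $p=\eta=1$, $\Lambda_0=\gamma=2$: for $k=1$ one gets $x_1^*=\tfrac12$, $B=1$, $C=0$, so $Tr^2-4Det=(B-1)^2-4\eta B=-4<0$; for $k=0.99$ the unique root of $H$ sits near $x\approx 0.502$ with $C\approx 0.01$, $B\approx 1.007$, and $(C+B-1)^2-4\eta B\approx -4.03<0$, a stable focus. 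So statement (i) as written is too strong --- it should read ``stable node or focus,'' exactly as in (ii) --- and no manipulation of $UV=\gamma$ will rescue the node claim. Your identification of the obstacle is accurate; the obstacle lies in the statement rather than in your method, and the paper offers no argument that would close it.
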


\begin{lemma}\label{l3}
For $1<k\leq2$, the following statements hold.

(i) When $1<k<2$ and $R_0<1$,

\quad(i-1) if $H(x_c)=0$, then system \eqref{1.5} has a unique degenerate positive equilibrium $E_2^*(x_c,\eta x_c)$;

\quad(i-2) if $H(x_c)>0$, then system \eqref{1.5} has two positive equilibria $\bar{E}_1^*(\bar{x}_1^*,\eta\bar{x}_1^*)$ and $\bar{E}_2^*(\bar{x}_2^*,\eta\bar{x}_2^*)$, in which $\bar{E}_1^*$ is a saddle and $\bar{E}_2^*$ is a node or focus.

(ii) When $1<k<2$, $H(x_c)>0$ and $R_0\geq1$, system \eqref{1.5} has a unique positive equilibrium $\bar{E}_2^*$, which is a node or focus.

(iii) When $k=2$ and $\gamma\leq\frac{(1+\eta+p\Lambda_0)^2}{4p(1+\eta)}$,

\quad(iii-1) if $R_0<1$ and $p>\frac{1+\eta}{\Lambda_0}$, then system \eqref{1.5} has two positive equilibria $E_{01}^*$ and $E_{02}^*$, in which $E_{01}^*$ is a saddle and $E_{02}^*$ is a node or focus;

\quad(iii-2) if $R_0>1$ and $p>\frac{1+\eta}{\Lambda_0}$ or $p<\frac{1+\eta}{\Lambda_0}$, then system \eqref{1.5} has a unique positive equilibrium $E_{02}^*$, which is a node or focus;

\quad(iii-3) if $\gamma=\frac{(1+\eta+p\Lambda_0)^2}{4p(1+\eta)}$ and $\Lambda_0>\frac{1+\eta}{p}$, system \eqref{1.5} has a unique degenerate positive equilibrium $\bar{E}_0^*=\big(\frac{p\Lambda_0-\eta-1}{2p(1+\eta)},\frac{\eta(p\Lambda_0-\eta-1)}{2p(1+\eta)}\big)$.
\end{lemma}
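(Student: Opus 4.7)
The plan is to combine the zero-counting of $H(x)$ already established in Cases 3--4 with the formulas
\[
\mathrm{Tr}(J(E))=\Lambda_0xH'(x)+\frac{\gamma\eta x}{\Lambda_0-(1+\eta)x}-1,\qquad \mathrm{Det}(J(E))=-\Lambda_0xH'(x),
\]
which are valid at any positive equilibrium $E(x,\eta x)$ because $H(x)=0$ there. Since $x>0$ and $\Lambda_0>0$, the sign of $\mathrm{Det}(J(E))$ is opposite to that of $H'(x)$, so a single glance at $H'$ sorts each equilibrium into three categories: saddle if $H'(x)>0$, hyperbolic antisaddle (node or focus) if $H'(x)<0$, degenerate if $H'(x)=0$. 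The lemma is then an equilibrium-by-equilibrium application of this dichotomy using the shape of $H$ displayed in the preceding discussion.

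For part (i), Case 3 gives that $H$ is strictly unimodal on $(0,\Lambda_0/(1+\eta))$ with a unique interior maximum at $x_c$, where by definition $H'(x_c)=0$. In subcase (i-1) the hypothesis $H(x_c)=0$ forces the unique zero of $H$ to coincide with $x_c$, so both $H(x_c)=H'(x_c)=0$ and $E_2^*$ is degenerate. In subcase (i-2), $R_0<1$ makes the endpoint values $H(0^+)=1-1/R_0<0$ and $H(\Lambda_0/(1+\eta)^-)=-1/R_0<0$, and $H(x_c)>0$ then produces exactly two zeros $\bar{x}_1^*<x_c<\bar{x}_2^*$ with $H'(\bar{x}_1^*)>0$ (saddle $\bar{E}_1^*$) and $H'(\bar{x}_2^*)<0$ (antisaddle $\bar{E}_2^*$). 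Part (ii) reuses this unimodality: when $R_0\geq 1$ we have $H(0^+)\geq 0$, so no zero sits on the increasing branch; the decreasing branch still ends at $-1/R_0<0$, leaving the single zero $\bar{x}_2^*$ with $H'(\bar{x}_2^*)<0$, again an antisaddle.

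For part (iii) with $k=2$, the function
\[
H(x)=\bigl(1-\tfrac{1}{R_0}\bigr)+\bigl(p-\tfrac{1+\eta}{\Lambda_0}\bigr)x-\tfrac{p(1+\eta)}{\Lambda_0}x^2
\]
is a downward-opening parabola, so whenever two real zeros $x_{01}^*<x_{02}^*$ exist the derivative signs are automatic: $H'(x_{01}^*)>0$ and $H'(x_{02}^*)<0$. Subcase (iii-1) is precisely the regime where both zeros are real and positive, which by Vieta reduces to $p>(1+\eta)/\Lambda_0$ (positive sum), $R_0<1$ (positive product $(\gamma-\Lambda_0)/(p(1+\eta))$), and $\Delta>0$ (realness); this yields a saddle $E_{01}^*$ and an antisaddle $E_{02}^*$. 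Subcase (iii-2) corresponds to $R_0>1$, where the product of roots is negative and only $x_{02}^*$ is positive, giving a single antisaddle regardless of whether $p$ exceeds $(1+\eta)/\Lambda_0$. Subcase (iii-3) is the double-root case $\Delta=0$, in which $x_{01}^*=x_{02}^*=\bar{x}_0^*$ and $H'(\bar{x}_0^*)=0$, so the equilibrium is degenerate; the condition $\Lambda_0>(1+\eta)/p$ is simply what is needed to keep $\bar{x}_0^*>0$.

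The only real bookkeeping I foresee is verifying in each subcase that the relevant root lies inside $(0,\Lambda_0/(1+\eta)]$, so that the entry $-\gamma x/(\Lambda_0-(1+\eta)x)$ of $J$ is finite; this follows from the explicit formulas together with the hypotheses of each subcase. Distinguishing node from focus and fixing stability would additionally require analyzing $\mathrm{Tr}(J(E))$ at $\bar{x}_2^*$ and $x_{02}^*$, but since the statement asserts only ``node or focus'', the determinant sign is enough and no further computation is needed here.
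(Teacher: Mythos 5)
Your proposal is correct and follows essentially the same route as the paper: the paper derives $\mathrm{Det}(J(E))=-\Lambda_0 H(x)-\Lambda_0 xH'(x)$, which reduces to $-\Lambda_0 xH'(x)$ at an equilibrium, and then reads off saddle/antisaddle/degenerate from the sign of $H'$ at each zero of $H$ identified in Cases 3 and 4 (the lemma is stated there with only the remark ``from above analysis''). Your Vieta computation for $k=2$ and the verification that the discriminant condition matches $\gamma<\frac{(1+\eta+p\Lambda_0)^2}{4p(1+\eta)}$ are exactly the bookkeeping the paper leaves implicit.
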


\begin{lemma}\label{l4}
When $k>2$, the following statements hold.

\textbf{(i):} For $H'(\bar{x}_c)\leq0$ and  $R_0>1$,  system \eqref{1.5} has a unique positive equilibrium $E_4^*(x_4^*,\eta x_4^*)$, in which $E_4^*$ is a degenerate equilibrium when $H'(\bar{x}_c)=0$ and $x_4^*=\bar{x}_c$.

\textbf{(ii):} For $H'(\bar{x}_c)>0$ and $R_0\leq1$, if $H(x_{01})<1-\frac{1}{R_0}\leq0<H(x_{02})$, then system \eqref{1.5} has two positive equilibria $\hat{E}_1^*(\hat{x}_1^*,\eta\hat{x}_1^*)$ and $\hat{E}_2^*(\hat{x}_2^*,\eta\hat{x}_2^*)$, in which $\hat{E}_1^*$ is a saddle and $\hat{E}_2^*$ is a node or focus. If $H(x_{01})<1-\frac{1}{R_0}\leq H(x_{02})=0$, then system \eqref{1.5} has a unique degenerate positive equilibrium $\hat{E}_5^*(\hat{x}_5^*,\eta\hat{x}_5^*)$.

\textbf{(iii):} For $H'({\bar{x}_c})>0$ and $R_0>1$, the following statements hold.

\quad\textbf{(iii-1):} If $H(x_{01})<0<H(x_{02})<1-\frac{1}{R_0}$ or $H(x_{01})<0<1-\frac{1}{R_0}<H(x_{02})$ or $H(x_{01})<0<1-\frac{1}{R_0}=H(x_{02})$, then system \eqref{1.5} has three positive equilibria $\bar{E}_3^*(\bar{x}_3^*,\eta\bar{x}_3^*)$, $\bar{E}_4^*(\bar{x}_4^*,\eta\bar{x}_4^*)$ and $\bar{E}_5^*(\bar{x}_5^*,\eta\bar{x}_5^*)$, which are focus or node, saddle and node or focus, respectively.

\quad\textbf{(iii-2):} If $H(x_{01})=0<H(x_{02})<1-\frac{1}{R_0}$ or $H(x_{01})=0<1-\frac{1}{R_0}<H(x_{02})$ or $H(x_{01})=0<1-\frac{1}{R_0}=H(x_{02})$, then system \eqref{1.5} has two equilibria $E_{30}^*(x_{30}^*,\eta x_{30}^*)$ and $\bar{E}_5^*(\bar{x}_5^*,\eta\bar{x}_5^*)$ ($x_{30}^*<\bar{x}_5^*$), in which $E_{30}^*$ is a degenerate equilibrium.

\quad\textbf{(iii-3):} If $H(x_{01})<0=H(x_{02})<1-\frac{1}{R_0}$, then system \eqref{1.5} has two equilibria $\bar{E}_3^*(\bar{x}_3^*,\eta\bar{x}_3^*)$ and $E_{40}^*(x_{40}^*,\eta x_{40}^*)$ ($\bar{x}_3^*<x_{40}^*$), in which $E_{40}^*$ is a degenerate equilibrium.

\quad\textbf{(iii-4):} If $0<H(x_{01})<1-\frac{1}{R_0}=H(x_{02})$ or $0<H(x_{01})<1-\frac{1}{R_0}<H(x_{02})$ or $0<H(x_{01})<H(x_{02})<1-\frac{1}{R_0}$, then system \eqref{1.5} has a unique equilibrium $\hat{E}_3^*(\hat{x}_3^*,\eta\hat{x}_3^*)$, in which $\hat{E}_3^*$ is a stable (an unstable) node or focus when $\gamma<\frac{\big(1-\Lambda_0\hat{x}_3^*H'(\hat{x}_3^*)\big)\big(\Lambda_0-(1+\eta)\hat{x}_3^*\big)}{\eta \hat{x}_3^*}$ ($\gamma>\frac{\big(1-\Lambda_0\hat{x}_3^*H'(\hat{x}_3^*)\big)\big(\Lambda_0-(1+\eta)\hat{x}_3^*\big)}{\eta \hat{x}_3^*}$).

\quad\textbf{(iii-5):} If $H(x_{01})<H(x_{02})<0<1-\frac{1}{R_0}$, then system \eqref{1.5} has a unique equilibrium $\hat{E}_4^*(\hat{x}_4^*,\eta\hat{x}_4^*)$, in which $\hat{E}_4^*$ is a stable (an unstable) node or focus when $\gamma<\frac{\big(1-\Lambda_0\hat{x}_4^*H'(\hat{x}_4^*)\big)\big(\Lambda_0-(1+\eta)\hat{x}_4^*\big)}{\eta \hat{x}_4^*}$ ($\gamma>\frac{\big(1-\Lambda_0\hat{x}_4^*H'(\hat{x}_4^*)\big)\big(\Lambda_0-(1+\eta)\hat{x}_4^*\big)}{\eta \hat{x}_4^*}$).
\end{lemma}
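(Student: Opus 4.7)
The plan is to read everything off the shape of $H$ already determined in Case 5, combined with the simplified Jacobian data at positive equilibria. At any equilibrium $E(x^*,\eta x^*)$ with $H(x^*)=0$, the displayed formulas for $\mathrm{Det}(J(E))$ and $\mathrm{Tr}(J(E))$ collapse to
\begin{equation*}
\mathrm{Det}(J(E)) = -\Lambda_0\, x^*\, H'(x^*), \qquad
\mathrm{Tr}(J(E)) = \Lambda_0 x^* H'(x^*) + \frac{\gamma\eta x^*}{\Lambda_0-(1+\eta)x^*} - 1.
\end{equation*}
Since $\Lambda_0,x^*>0$, this gives the dictionary I will use throughout the proof: $H'(x^*)>0$ forces $\mathrm{Det}(J(E))<0$, so $E$ is a saddle; $H'(x^*)<0$ forces $\mathrm{Det}(J(E))>0$, so $E$ is a node or focus whose stability is determined by the sign of $\mathrm{Tr}(J(E))$; and $H'(x^*)=0$ is precisely the degenerate (non-hyperbolic) case.

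For part (i) I would invoke the fact that, when $H'(\bar x_c)\leq 0$, $H$ is strictly decreasing on $(0,\Lambda_0/(1+\eta))$ (with a horizontal tangent at $\bar x_c$ only in the boundary case). Combined with $H(0^+)=1-\tfrac{1}{R_0}>0$ and $H(\Lambda_0/(1+\eta)^-)=-\tfrac{1}{R_0}<0$, the intermediate value theorem yields the unique zero $x_4^*$; it is degenerate exactly when $H'(x_4^*)=0$, which by the monotonicity forces $x_4^*=\bar x_c$ and $H'(\bar x_c)=0$.

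For parts (ii) and (iii) the hypothesis $H'(\bar x_c)>0$ means $H'$ has exactly two zeros $x_{01}<x_{02}$ with $H'<0$ on $(0,x_{01})\cup(x_{02},\Lambda_0/(1+\eta))$ and $H'>0$ on $(x_{01},x_{02})$. On each of these three intervals $H$ is strictly monotone, so in every subcase the number and location of positive zeros of $H$ are pinned down by comparing $1-\tfrac{1}{R_0}=H(0^+)$, $H(x_{01})$, $H(x_{02})$ and $-\tfrac{1}{R_0}=H(\Lambda_0/(1+\eta)^-)$, as displayed in Fig.~\ref{L3} and Fig.~\ref{L1}. Once a zero is located in a definite monotonicity interval, the dictionary above immediately assigns the correct equilibrium type: zeros in $(x_{01},x_{02})$ are saddles, zeros in $(0,x_{01})\cup(x_{02},\Lambda_0/(1+\eta))$ are node/foci, and the degenerate equilibria $\hat E_5^*$, $E_{30}^*$, $E_{40}^*$ arise exactly when a zero of $H$ coincides with $x_{01}$ or $x_{02}$. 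The stability dichotomies in (iii-4) and (iii-5) follow by solving $\mathrm{Tr}(J(\hat E_i^*))<0$ for $\gamma$ using the trace formula above, which rearranges (since $\Lambda_0-(1+\eta)\hat x_i^*>0$) to the stated inequality.

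The main obstacle is not analytical but combinatorial: subcases (iii-1)--(iii-5) enumerate many orderings among $H(x_{01})$, $H(x_{02})$ and $1-\tfrac{1}{R_0}$, and one must verify for each one that the zeros of $H$ really do sit in the monotonicity interval that produces the claimed signature of $H'(x^*)$. Keeping the picture from Case 5 aligned with the determinant--trace reading is the only bookkeeping required; no further analytic machinery is needed.
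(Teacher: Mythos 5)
Your proposal is correct and follows exactly the route the paper intends: the lemma is stated as a direct consequence of the Case 5 analysis of the zeros of $H$ together with the formulas for $\mathrm{Tr}(J(E))$ and $\mathrm{Det}(J(E))$, which at an equilibrium reduce to the sign dictionary $\mathrm{Det}(J(E))=-\Lambda_0 x^*H'(x^*)$ you use. The only cosmetic caveat is that when $\mathrm{Tr}(J(E))=0$ the equilibrium could a priori be a center rather than a focus, but the lemma (like the paper) only asserts stability for strict inequalities on $\gamma$, so nothing is lost.
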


\section{Degenerate equilibria of system \eqref{1.5}}
\begin{figure}[ht!]
\begin{center}
\begin{overpic}[scale=0.6]{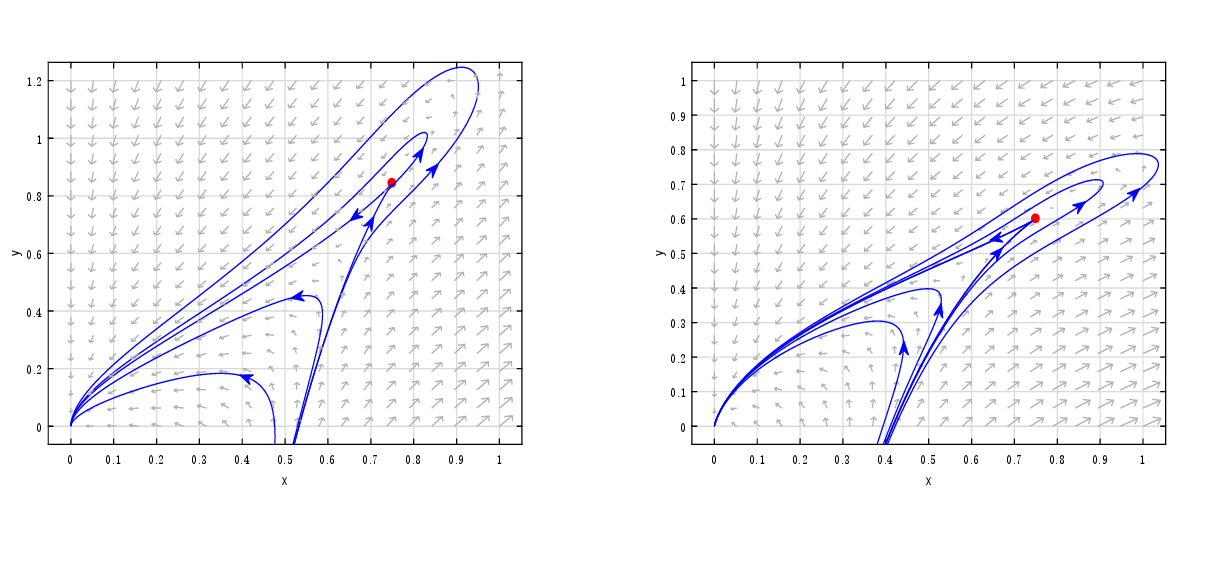}
\end{overpic}
\put(-280,13){$(a)$}
\put(-245,104){$E_2^*$}
\put(-90,13){$(b)$}
\put(-60,93){$\bar{E}_0^*$}
\vspace{-5mm}
\end{center}
\caption{ The phase portrait for system \eqref{1.5} when $1<k\leq2$. (a) $E_2^*$ is a saddle-node when $1<k<2$; (b) $\bar{E}_0^*$ is a saddle-node when $k=2$.\label{x2}}
\end{figure}
In this section, we consider the  degenerate equilibria for system \eqref{1.5}. From Lemma \ref{l3} and Lemma \ref{l4}, we know that $E_2^*$, $\bar{E}_0^*$, $E_{30}^*$, $E_{40}^*$, $E_4^*$ and $\hat{E}_5^*$ are degenerate equilibria.
\begin{figure}[ht!]
\begin{center}
\begin{overpic}[scale=0.70]{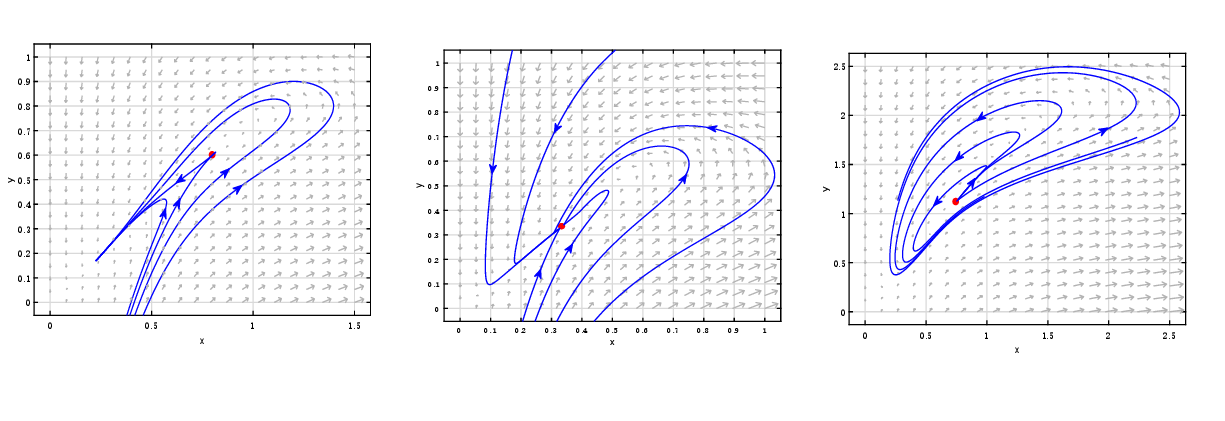}
\end{overpic}
\put(-350,13){$(a)$}
\put(-350,86){$E_{40}^*$}
\put(-210,13){$(b)$}
\put(-230,68){$E_4^*$}
\put(-70,13){$(c)$}
\put(-88,78){$E_4^*$}
\vspace{-5mm}
\end{center}
\caption{ The phase portrait for system \eqref{1.5} when $k>2$. (a) $E_{40}^*$ is a saddle-node; (b) $E_4^*$ is a stable degenerate node; (c) $E_4^*$ is an unstable degenerate node.\label{x1}}
\end{figure}
\begin{theorem}\label{t1}
For $\Lambda_0=\hat{\Lambda}_0:=\frac{(1+\eta)(z+kpz^k)}{z^{k-1}p(k-1)}$, $\gamma=\hat{\gamma}:=\frac{(1+\eta)(z+pz^k)^2}{z^{k}p(k-1)}>\eta$, $\hat{\eta}:=\frac{1}{z+pz^k}$ and $\bar{\eta}_1:=\frac{k}{2z(k-1)}$, the following statements hold.

(i)  $E_2^*$ is a saddle-node for $\eta\neq\hat{\eta}$ and $1<k<2$, see Fig.\ref{x2}(a);

(ii) $\bar{E}_0^*$ is a saddle-node for $\eta\neq\frac{1}{z+pz^2}$ and $k=2$, see Fig.\ref{x2}(b);

(iii) $E_{30}^*$ ($E_{40}^*$, $E_4^*$ and $\hat{E}_5^*$) is a saddle-node with a stable (or unstable) parabolic sector for $\eta<\hat{\eta}$ (or $\eta>\hat{\eta}$), $p\neq\hat{p}:=\frac{k-2}{kz^{k-1}}$ and $k>2$, see Fig.\ref{x1}(a);

(iv)  $E_4^*$ is a stable (an unstable) degenerate node for $p=\hat{p}$, $\eta<\bar{\eta}_1$ and $z>\frac{k(\sqrt{2k-3}-1)}{4(k-1)}$ or $z<\frac{k(\sqrt{2k-3}-1)}{4(k-1)}$ and $\eta<\frac{4z(k-1)}{k^2+4z-4kz-2k}$ ($\eta>\bar{\eta}_1$ and $z>\frac{k(k-2)}{4(k-1)}$ or $\bar{\eta}_1<\eta<\frac{4z(k-1)}{k^2+4z-4kz-2k}$ and $\frac{k(\sqrt{2k-3}-1)}{4(k-1)}<z<\frac{k(k-2)}{4(k-1)}$), see Fig.\ref{x1}(b) and Fig.\ref{x1}(c).
\end{theorem}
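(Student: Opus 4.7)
\medskip

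\noindent\textbf{Proof proposal.} The strategy is the standard one for planar degenerate equilibria: translate the degenerate equilibrium $(z,\eta z)$ to the origin, carry out Taylor expansion, diagonalize the linear part, and then read off the type from the first nonvanishing jet of the reduced equation on the center manifold. Throughout, the defining conditions $\Lambda_0=\hat{\Lambda}_0$ and $\gamma=\hat{\gamma}$ are exactly $H(z)=0$ and $H'(z)=0$, i.e.\ $z$ is a double zero of $H$; this forces $\det J(E)=-\Lambda_0 H(z)-\Lambda_0 z H'(z)=0$ at the equilibrium under consideration, so one eigenvalue vanishes automatically. The companion threshold $\eta=\hat{\eta}$ is precisely the condition $\mathrm{Tr}\,J(E)=0$, and $p=\hat{p}$ is obtained by further demanding $H''(z)=0$. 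This gives the hierarchy: generic codimension-one case (parts (i)--(iii)) when the trace is nonzero; nilpotent case (part (iv)) when the trace is zero.

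\medskip

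\noindent\textbf{Steps for parts (i)--(iii).} Set $X=x-z$, $Y=y-\eta z$ and expand
\begin{equation*}
\dot X=a_{10}X+a_{01}Y+\sum_{i+j\ge 2}a_{ij}X^iY^j,\qquad \dot Y=\eta X-Y,
\end{equation*}
where the $a_{ij}$ are computed from $x(1+px^{k-1})(\Lambda_0-x-y)-\gamma x$ at $(z,\eta z)$; in particular $a_{10}+(-1)=\mathrm{Tr}\,J$ and $a_{10}\cdot(-1)-a_{01}\eta=\det J=0$. Because $\eta\ne\hat\eta$, the trace is nonzero and the linear part has simple eigenvalues $0$ and $\mathrm{Tr}\,J\ne 0$. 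Apply the linear change
\begin{equation*}
\begin{pmatrix}u\\ v\end{pmatrix}=T\begin{pmatrix}X\\ Y\end{pmatrix}
\end{equation*}
that diagonalizes $J$, so the system becomes $\dot u=P_2(u,v)+O(3)$, $\dot v=\lambda v+Q_2(u,v)+O(3)$ with $\lambda=\mathrm{Tr}\,J\ne 0$. Computing the center manifold $v=h(u)=\alpha u^2+O(u^3)$ and substituting gives a reduced equation
\begin{equation*}
\dot u=c_2 u^2+c_3u^3+\cdots
\end{equation*}
The coefficient $c_2$ is, up to a nonzero factor, proportional to $H''(z)$. For cases (i), (ii) and (iii) one verifies $H''(z)\ne 0$ (this amounts, after simplification, to $p\ne\hat p$ in case (iii); in cases (i) and (ii) the corresponding second-derivative condition is automatic from $1<k\le 2$ via the formula for $H''$ in \eqref{2.4}). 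Thus $c_2\ne 0$ and by Theorem 7.1 of Chapter 2 in \cite{ZTW} the equilibrium is a saddle-node. The sign of $c_2$ together with the sign of $\eta-\hat\eta$ determines the orientation (stable vs.\ unstable parabolic sector) stated in (iii).

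\medskip

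\noindent\textbf{Steps for part (iv).} When $p=\hat{p}$ the coefficient $c_2$ above vanishes because $H''(z)=0$, so the saddle-node degenerates. Under the standing $\Lambda_0=\hat\Lambda_0$, $\gamma=\hat\gamma$, $p=\hat p$ one checks that both eigenvalues of $J(E_4^*)$ vanish, i.e.\ the linear part is nilpotent. After a linear transformation bringing $J$ to the Jordan block $\bigl(\begin{smallmatrix}0 & 1\\ 0 & 0\end{smallmatrix}\bigr)$, the system takes the normal form
\begin{equation*}
\dot u=v,\qquad \dot v=A u^m\bigl(1+o(1)\bigr)+B u^n v\bigl(1+o(1)\bigr),
\end{equation*}
and the type is read off from $(A,B,m,n)$ by the classification in Chapter 2 of \cite{ZTW}. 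The explicit formulas for $A$, $B$ involve $H'''(z)$ (which is nonzero in this regime) and the interaction coefficients from the incidence nonlinearity; the algebraic inequalities on $z$ and $\eta$ in the statement come precisely from requiring $A\cdot(\text{sign of trace-like correction})$ to fall in the "degenerate stable node" or "degenerate unstable node" sector of the classification chart. The stated thresholds $\bar\eta_1=k/(2z(k-1))$ and $z=k(\sqrt{2k-3}-1)/(4(k-1))$, $z=k(k-2)/(4(k-1))$ are the boundaries where these discriminants change sign.

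\medskip

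\noindent\textbf{Main obstacle.} The hardest part is the bookkeeping in step (iv): after enforcing three simultaneous algebraic identities ($H=H'=H''=0$ at $z$) one must still extract clean expressions for the coefficients $A$ and $B$ of the nilpotent normal form and then decide their signs on the two-parameter region in $(z,\eta)$. This requires careful use of the defining identities to eliminate $\Lambda_0$, $\gamma$, $p$ in favor of $z$, $\eta$, $k$, after which the sign conditions collapse to the quadratic inequalities stated. Parts (i)--(iii), by contrast, reduce to a single computation of $c_2$ and a sign check, which is routine once the linear diagonalization has been set up.
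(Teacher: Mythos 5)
Your treatment of parts (i)--(iii) is essentially the paper's argument: translate the degenerate equilibrium to the origin, apply the linear change adapted to the simple zero eigenvalue, reduce to the center manifold, and check that the quadratic coefficient of the reduced equation (the paper's $\xi_{20}$, your $c_2$) is a nonzero multiple of $k-2-kpz^{k-1}$, hence of $H''(z)$; Theorem 7.1 of Chapter 2 in \cite{ZTW} then gives a saddle-node. Your observations that this nonvanishing is automatic for $1<k\le 2$ and equivalent to $p\ne\hat p$ for $k>2$, and that the stable/unstable parabolic sector is governed by the sign of the nonzero eigenvalue (i.e.\ of $\eta-\hat\eta$), agree with what the paper verifies.

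Part (iv), however, rests on a false premise. You assert that $p=\hat p$ forces both eigenvalues of $J(E_4^*)$ to vanish and then invoke the nilpotent normal form $\dot u=v$, $\dot v=Au^m(1+o(1))+Bu^nv(1+o(1))$. That is not the situation here: the eigenvalues at the degenerate equilibrium are $0$ and $\eta(z+pz^k)-1$, so nilpotency occurs iff $\eta=\hat\eta=\frac{1}{z+pz^k}$, which for $p=\hat p$ equals precisely $\bar\eta_1$ --- and the hypotheses of (iv) place $\eta$ strictly on one side of $\bar\eta_1$, so the second eigenvalue is nonzero there. The condition $p=\hat p$ is an independent degeneracy: it makes $H''(z)=0$ and therefore annihilates the quadratic coefficient $\xi_{20}$ of the center-manifold reduction while the equilibrium remains semi-hyperbolic. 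The correct continuation (and the paper's) is to push the same center-manifold computation one order further, obtaining $\dot u=\xi_{30}u^3+O(|u|^4)$ with $\xi_{30}=\frac{(k-1)(k-2)(1+\eta)k^2}{6\eta^2z(2k\eta z-2\eta z-k)^2}>0$, and to conclude ``degenerate node'' again from Theorem 7.1 of \cite{ZTW}; the stable/unstable dichotomy in the statement then comes from the sign of the nonzero eigenvalue and the accompanying time rescaling, i.e.\ from the position of $\eta$ relative to $\bar\eta_1$. The genuinely nilpotent case $\eta=\hat\eta$ is a different branch of the analysis, treated separately in Theorem \ref{t2} and the subsequent cusp computations, so the nilpotent classification chart, the discriminant thresholds you describe, and the appeal to $H'''(z)$ do not apply to part (iv) as you have set it up.
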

\begin{proof}
Denote the abscissa of the degenerate equilibria $E_2^*$ or $\bar{E}_0^*$ or $E_{30}^*$ or $E_{40}^*$ or $E_4^*$ or $\hat{E}_5^*$ by  $z$. For $H(z)=0$ and $H'(z)=0$, we have $\Lambda_0=\hat{\Lambda}_0$ and $\gamma=\hat{\gamma}$.
The Jacobian matrix of system \eqref{1.5} at $E_2^*$ (or $\bar{E}_0^*$ or $E_{30}^*$ or $E_{40}^*$ or $E_4^*$ or $\hat{E}_5^*$) is
\begin{equation*}
\left(
\begin{array}{cc}
\eta(z+pz^k)& -(z+pz^k)\\
\eta& -1
\end{array}
\right),
\end{equation*}
and the characteristic equation of system \eqref{1.5} at $E_2^*$ (or $\bar{E}_0^*$ or $E_{30}^*$ or $E_{40}^*$ or $E_4^*$ or $\hat{E}_5^*$) is
\begin{equation}
\begin{array}{ll}\label{3.1}
\begin{aligned}
Q(\lambda)\overset{\triangle}{=}\lambda^2+(1-\eta z-p\eta z^k)\lambda.
\end{aligned}
\end{array}
\end{equation}
From \eqref{3.1}, we know that equilibrium $E_2^*$ (or $\bar{E}_0^*$ or $E_{30}^*$ or $E_{40}^*$ or $E_4^*$ or $\hat{E}_5^*$) has at least one zero eigenvalue. Hence, $E_2^*$ (or $\bar{E}_0^*$ or $E_{30}^*$ or $E_{40}^*$ or $E_4^*$ or $\hat{E}_5^*$) is a degenerate positive equilibrium for system \eqref{1.5} when $\Lambda_0=\hat{\Lambda}_0$, $\gamma=\hat{\gamma}$ and $k>1$.

When $\eta\neq\hat{\eta}$, we have $\eta z+p\eta z^k-1\neq0$, i.e., $E_2^*$ (or $\bar{E}_0^*$ or $E_{30}^*$ or $E_{40}^*$ or $E_4^*$ or $\hat{E}_5^*$) has a unique zero eigenvalue. We make the following transformations successively
\begin{equation*}
\begin{array}{ll}
\begin{aligned}
x&=X+z,\quad y=Y+\eta z;\\[2ex]
X&=\frac{1}{\eta}u+(z+pz^k)v,\quad Y=u+v,\quad t=\frac{\tau}{p\eta z^k+\eta z-1},
\end{aligned}
\end{array}
\end{equation*}
and system \eqref{1.5} can be written as (still denote $\tau$ by $t$)
\begin{equation}
\begin{array}{ll}\label{3.2}
\left\{
\begin{aligned}
\dot{u}&=\xi_{20}u^2+\xi_{11}uv+\xi_{02}v^2+\xi_{30}u^3+\xi_{21}u^2v+\xi_{12}uv^2+\xi_{03}v^3+\mathcal{O}(|u,v|^{4}),\\[2ex]
\dot{v}&=v-\xi_{20}u^2-\xi_{11}uv-\xi_{02}v^2-\xi_{30}u^3-\xi_{21}u^2v-\xi_{12}uv^2-\xi_{03}v^3+\mathcal{O}(|u,v|^{3}),
\end{aligned}
\right.
\end{array}
\end{equation}
where $\xi_{20}=-\frac{(1+\eta)(k-2-kpz^{k-1})}{2\eta(\eta z+p\eta z^k-1)^2}$, $\xi_{30}=\frac{k(1+\eta)(2kpz^k-pz^k-kz+2z)}{6z^2\eta^2(p\eta z^k+\eta z-1)^2}$ and other $\xi_{ij}$ ($i+j\leq3$) are omitted here.

(i) For $\eta\neq\hat{\eta}$ and $1<k<2$, we have $\xi_{20}>0$;

(ii) For $k=2$, we have $\xi_{20}>0$;

(iii) For $p\neq\hat{p}$ and $k>2$, we have $\xi_{20}\neq0$.

From the center manifold theorem, it follows that
\begin{equation}
\begin{array}{ll}\label{3.3}
\begin{aligned}
\dot{u}=\xi_{20}u^2+O(|u|^3).
\end{aligned}
\end{array}
\end{equation}
By Theorem 7.1 of  Chapter 2 in \cite{ZTW}, $E_2^*$ (or $\bar{E}_0^*$ or $E_{30}^*$ or $E_{40}^*$ or $E_4^*$ or $\hat{E}_5^*$) is a saddle-node.

(iv) For $p=\hat{p}$, $k>2$ and $\eta\neq\bar{\eta}_1$, i.e., $\xi_{20}=0$ and $E_2^*$ (or $\bar{E}_0^*$ or $E_{30}^*$ or $E_{40}^*$ or $E_4^*$ or $\hat{E}_5^*$) has a unique zero eigenvalue. Besides, $\xi_{30}=\frac{(k-1)(k-2)(1+\eta)k^2}{6\eta^2z(2k\eta z-2\eta z-k)^2}>0$. Through the center manifold theorem, we have
\begin{equation}
\begin{array}{ll}\label{2.3}
\begin{aligned}
\dot{u}=\xi_{30}u^3+O(|u|^4).
\end{aligned}
\end{array}
\end{equation}
By Theorem 7.1 of Chapter 2 in \cite{ZTW}, $E_4^*$ is a degenerate node for $p=\hat{p}$ and $\eta\neq\bar{\eta}_1$.
\end{proof}

For $k>1$ and $\eta=\hat{\eta}$, $\Lambda_0=\check{\Lambda}_0:=\frac{(1+z+pz^k)(z+kpz^k)}{pz^{k-1}(k-1)(z+pz^k)}$ and $\gamma=\check{\gamma}:=\frac{(1+z+pz^k)(z+pz^k)}{pz^k(k-1)}>\hat{\eta}$. Through calculation, we detect that
 $J(E_2^*)$ (or $J(\bar{E}_0^*)$ or $J(E_{30}^*)$ or $J(E_{40}^*)$ or $J(E_4^*)$ or $J(\hat{E}_5^*)$) has two zero eigenvalues if and only if $\eta=\hat{\eta}$. Making the following transformations successively
\begin{equation*}
\begin{array}{ll}
\begin{aligned}
x&=u+z,\quad y=v+\frac{1}{z+pz^k}z;\\[2ex]
u&=(z+pz^k)X+(z+pz^k)Y,\quad v=X,
\end{aligned}
\end{array}
\end{equation*}
system \eqref{1.5} can be changed into
\begin{equation}
\begin{array}{ll}\label{3.4}
\left\{
\begin{aligned}
\dot{X}&=Y+\mathcal{O}(|X,Y|^{5}),\\[2ex]
\dot{Y}&=\sum\limits_{2\leq i+j\leq5}a_{ij}X^{i}Y^{j}+\mathcal{O}(|X,Y|^{5}),
\end{aligned}
\right.
\end{array}
\end{equation}
where $a_{ij}$ are omitted here for brevity.

Setting $X=X_1+\frac{a_{02}}{2}X_1^2$ and $Y=Y_1+a_{02}X_1Y_1$, system \eqref{3.4} can be written as
\begin{equation}
\begin{array}{ll}\label{3.5}
\left\{
\begin{aligned}
\dot{X}_1&=Y_1+\mathcal{O}(|X_1,Y_1|^{5}),\\[2ex]
\dot{Y}_1&=b_{20}X_1^2+b_{11}X_1Y_1+\sum\limits_{3\leq i+j\leq5}b_{ij}X_1^{i}Y_1^{j}+\mathcal{O}(|X_1,Y_1|^{5}),
\end{aligned}
\right.
\end{array}
\end{equation}
where $b_{20}=\frac{(1+z+pz^k)(kz-2z-kpz^k)}{2z}$, $b_{11}=\frac{(k-1)z+(k-2)z^2-kp^2z^{2k}-2pz^{k+1}}{z}$.
\begin{theorem}\label{t2}
If $b_{20}b_{11}\neq0$, then $E_2^*$ (or $\bar{E}_0^*$ or $E_{30}^*$ or $E_{40}^*$ or $E_4^*$ or $\hat{E}_5^*$)  is a cusp of codimension 2.
\end{theorem}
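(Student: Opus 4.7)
My plan is to recognize system (3.5) as already sitting in the Bogdanov--Takens pre-normal form and then invoke the standard topological recognition theorem for a cusp of codimension~2. First, I would note that the hypothesis $\eta=\hat{\eta}$ (combined with $\Lambda_0=\check{\Lambda}_0$ and $\gamma=\check{\gamma}$) has already been shown to force the Jacobian at the degenerate equilibrium to have a double-zero eigenvalue with a one-dimensional eigenspace, so the linear part is a nilpotent Jordan block. The successive linear substitution $(u,v)\to(X,Y)$ and the near-identity quadratic change $X=X_1+\tfrac{a_{02}}{2}X_1^2,\ Y=Y_1+a_{02}X_1Y_1$ were engineered precisely to place the linear part in the Jordan form $\bigl(\begin{smallmatrix}0 & 1\\ 0 & 0\end{smallmatrix}\bigr)$ and to eliminate the spurious $Y_1^2$ term from $\dot Y_1$, which is the first step of the Takens normalization.

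Next, I would appeal to the classical cusp theorem (see Theorem 7.3 of Chapter 2 in \cite{ZTW}, or equivalently Bogdanov's cusp lemma): any planar analytic vector field of the form
\[
\dot u = v + O(|u,v|^3),\qquad \dot v = A u^2 + B\, uv + O(|u,v|^3)
\]
with $AB\neq 0$ is locally topologically equivalent near the origin to $\dot u=v,\ \dot v=u^2\pm uv$, whose origin is a cusp of codimension~2. With $A=b_{20}$ and $B=b_{11}$, the nondegeneracy assumption $b_{20}b_{11}\neq 0$ is exactly the condition $AB\neq 0$, so the result applies directly. Note that we do not need the explicit values of the cubic coefficients $b_{30},\ldots,b_{03}$ nor any higher-order reductions, because the theorem already provides topological equivalence purely on the basis of the quadratic jet.

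The only remaining task is bookkeeping: verifying that the stated closed forms
\[
b_{20}=\frac{(1+z+pz^k)(kz-2z-kpz^k)}{2z},\qquad b_{11}=\frac{(k-1)z+(k-2)z^2-kp^2z^{2k}-2pz^{k+1}}{z}
\]
are correct under the two substitutions. This is a routine but lengthy polynomial expansion of the right-hand side of (1.5) around $(z,\hat{\eta}z)$ through degree two, followed by the quadratic near-identity change, and I would expect this to be the main practical obstacle in a careful write-up; conceptually, however, the theorem reduces to the observation that (3.5) is in pre-normal form and that the hypothesis $b_{20}b_{11}\neq 0$ rules out a more degenerate singularity (which would correspond to a higher-codimension cusp or focus/elliptic case requiring the cubic terms).
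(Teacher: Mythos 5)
Your proposal is correct and follows essentially the same route as the paper: the paper derives the pre-normal form (3.5) with the coefficients $b_{20}$ and $b_{11}$ and then states the theorem without further argument, implicitly invoking exactly the classical cusp recognition result ($\dot u=v+\cdots$, $\dot v=Au^2+Buv+\cdots$ with $AB\neq0$ gives a codimension-2 cusp) that you cite. Your additional remarks on the role of the quadratic near-identity change and on why the cubic coefficients are not needed are consistent with, and merely make explicit, what the paper leaves unsaid.
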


If $b_{20}b_{11}=0$, the following statement holds for $E_2^*$ (or $\bar{E}_0^*$ or $E_{30}^*$ or $E_{40}^*$ or $E_4^*$ or $\hat{E}_5^*$).
\begin{lemma}\label{l5}
System \eqref{3.5} is locally topologically equivalent to
\begin{equation}
\begin{array}{ll}\label{3.6}
\left\{
\begin{aligned}
\dot{X}_4&=Y_4+\mathcal{O}(|X_4,Y_4|^{5}),\\[2ex]
\dot{Y}_4&=k_{20}X_4^2+k_{11}X_4Y_4+k_{30}X_4^3+k_{21}X_4^2Y_4+k_{40}X_4^4+k_{31}X_4^3Y_4\\[2ex]
&\quad+k_{50}X_4^5+k_{41}X_4^4Y_4+\mathcal{O}(|X_4,Y_4|^{5}),
\end{aligned}
\right.
\end{array}
\end{equation}
where $k_{ij}$ can be expressed by $b_{ij}$.
\end{lemma}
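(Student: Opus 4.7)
The plan is to carry out a standard nilpotent (Bogdanov--Takens) normal-form reduction on \eqref{3.5}, degree by degree up to order five, producing \eqref{3.6} as the Li\'enard chart. Since \eqref{3.5} already has nilpotent linear part $Y_1\partial_{X_1}$ with its first equation in the canonical form $\dot X_1 = Y_1 + \mathcal{O}(|X_1,Y_1|^5)$, it suffices to eliminate all monomials $X^i Y^j$ with $j\ge 2$ from the $\dot Y$ equation at each degree $n=3,4,5$. I would do this by successive near-identity coordinate changes
\[
X_{\mathrm{old}} = X_{\mathrm{new}} + \phi_n(X_{\mathrm{new}},Y_{\mathrm{new}}), \qquad Y_{\mathrm{old}} = Y_{\mathrm{new}} + \psi_n(X_{\mathrm{new}},Y_{\mathrm{new}}),
\]
where $\phi_n,\psi_n$ are homogeneous polynomials of degree $n$ chosen to kill the unwanted monomials at that order while preserving the form $\dot X = Y + \mathcal{O}(|X,Y|^{n+1})$.

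The key algebraic observation that makes this scheme run is the structure of the homological operator $L:=[\,\cdot\,,\,Y\partial_X]$ on degree-$n$ vector fields, which acts as $L(g\partial_X + h\partial_Y) = (Y g_X - h)\partial_X + Y h_X\partial_Y$. A direct bookkeeping computation shows that $L$ has a two-dimensional cokernel on each degree-$n$ space, with canonical representatives $X^n\partial_Y$ and $X^{n-1}Y\partial_Y$, which are exactly the Li\'enard monomials appearing in \eqref{3.6}. Moreover, the identity $L(X^n\partial_Y) = -X^n\partial_X + nX^{n-1}Y\partial_Y$ explains how an unwanted $X^n$ contribution induced in $\dot X$ by a coordinate change can be traded for an allowed $X^{n-1}Y$ contribution in $\dot Y$. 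Consequently, at each of the degrees $n=3,4,5$ the homological equation for $(\phi_n,\psi_n)$ is solvable, and the three steps together produce exactly the Li\'enard form \eqref{3.6}.

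The main obstacle is bookkeeping rather than existence. A coordinate change at degree $n$ generates correction terms at every higher degree $m>n$, so the final Li\'enard coefficients $k_{ij}$ are polynomial functions of the full family $\{b_{\alpha\beta}\}$ that must be assembled recursively by solving the three homological equations in sequence. Since the lemma only asserts that such an expression exists, I would prove it by invoking the nilpotent normal-form theorem used throughout the paper (Theorem 7.1 of Chapter 2 in \cite{ZTW}) and noting that the concrete degree-$3$, degree-$4$, and degree-$5$ eliminations outlined above supply the explicit, if unwieldy, formulas for the $k_{ij}$ in terms of the $b_{\alpha\beta}$; because every transformation used is a polynomial near-identity diffeomorphism, the resulting reduction yields the required local topological equivalence of \eqref{3.5} and \eqref{3.6}.
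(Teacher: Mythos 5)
Your proposal is correct and follows essentially the same route as the paper: the paper's proof consists precisely of three explicit near-identity polynomial coordinate changes at degrees $3$, $4$ and $5$ that successively remove the monomials $X^iY^j$ with $j\ge 2$ from the second equation (and restore $\dot X = Y$ up to the stated order), recording the resulting $k_{ij}$ as explicit polynomials in the $b_{ij}$. The only differences are presentational: you justify solvability of each homological equation abstractly via the cokernel of the nilpotent homological operator rather than exhibiting the transformations, and you attribute the reduction to Theorem 7.1 of Chapter 2 in \cite{ZTW}, which the paper actually uses for classifying semi-hyperbolic equilibria rather than for the nilpotent normal form — a minor misattribution that does not affect the substance of your argument.
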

\begin{proof}
Letting $X_1=X_2+\frac{b_{12}}{6}X_2^3+\frac{b_{03}}{2}X_2^2Y_2$ and $Y_1=Y_2+\frac{b_{12}}{2}X_2^2Y_2+b_{03}X_2Y_2^2$, system \eqref{3.5} can be transformed into
\begin{equation}
\begin{array}{ll}\label{3.7}
\left\{
\begin{aligned}
\dot{X}_2&=Y_2+c_{40}X_2^4+c_{31}X_2^3Y_2+c_{50}X_2^5+c_{41}X_2^4Y_2+\mathcal{O}(|X_2,Y_2|^{5}),\\[2ex]
\dot{Y}_2&=d_{20}X_2^2+d_{11}X_2Y_2+d_{30}X_2^3+d_{21}X_2^2Y_2+\sum\limits_{4\leq i+j\leq5}d_{ij}X_2^{i}Y_2^{j}+\mathcal{O}(|X_2,Y_2|^{5}),
\end{aligned}
\right.
\end{array}
\end{equation}
where
\begin{equation*}
\begin{array}{ll}
\begin{aligned}
c_{40}&=-\frac{b_{20}b_{03}}{2},\quad c_{31}=-\frac{b_{11}b_{03}}{2},\quad c_{50}=-\frac{b_{30}b_{03}}{2},\quad c_{41}=-\frac{b_{21}b_{03}}{2},\quad d_{20}=b_{20},\quad d_{11}=b_{11},\\[2ex]
d_{30}&=b_{30},\quad d_{21}=b_{21},\quad d_{40}=b_{40}-\frac{b_{20}b_{12}}{6},\quad d_{13}=b_{13},\quad d_{31}=b_{31}-b_{20}b_{03}+\frac{b_{11}b_{12}}{6},\\[2ex]
d_{22}&=b_{22}-\frac{b_{11}b_{03}}{2},\quad d_{04}=b_{04},\quad d_{50}=b_{50},\quad d_{41}=b_{41}-\frac{b_{30}b_{03}}{2}+\frac{b_{21}b_{12}}{3},\quad d_{14}=b_{14}+3b_{03}^2,\\[2ex]
d_{32}&=b_{32}+\frac{7b_{12}^2}{6},\quad d_{23}=b_{23}+4b_{03}b_{12},\quad d_{05}=b_{05}.
\end{aligned}
\end{array}
\end{equation*}

Making $X_2=X_3+\frac{3c_{31}+d_{22}}{12}X_3^4+\frac{d_{13}}{6}X_3^3Y_3+\frac{d_{04}}{2}X_3^2Y_2^2$ and $Y_2=Y_3-c_{40}X_3^4+\frac{d_{22}}{3}X_3^3Y_3+d_{04}X_3Y_3^3+\frac{d_{13}}{2}X_3^2Y_3^2$, system \eqref{3.7} becomes
\begin{equation}
\begin{array}{ll}\label{3.8}
\left\{
\begin{aligned}
\dot{X}_3&=Y_3+e_{50}X_3^5+e_{41}X_3^4Y_3+e_{32}X_3^3Y_3^2+\mathcal{O}(|X_3,Y_3|^{5}),\\[2ex]
\dot{Y}_3&=h_{20}X_3^2+h_{11}X_3Y_3+h_{30}X_3^3+h_{21}X_3^2Y_3+h_{40}X_3^4+h_{31}X_3^3Y_3\\[2ex]
&\quad+\sum\limits_{i+j=5}h_{ij}X_3^{i}Y_3^{j}+\mathcal{O}(|X_3,Y_3|^{5}),
\end{aligned}
\right.
\end{array}
\end{equation}
where
\begin{equation*}
\begin{array}{ll}
\begin{aligned}
e_{50}&=c_{50}-\frac{d_{20}d_{13}}{6},\quad e{41}=c_{41}-d_{20}d_{04}-\frac{d_{11}d_{13}}{6},\quad e_{32}=-d_{11}d_{04},\quad h_{20}=d_{20},\\[2ex]
h_{11}&=d_{11},\quad h_{30}=d_{30},\quad h_{21}=d_{21},\quad h_{40}=d_{40},\quad h_{31}=d_{31}+4c_{40},\quad h_{05}=d_{05},\\[2ex]
h_{50}&=d_{50}-c_{40}d_{11}-\frac{d_{20}}{6}(3c_{31}-d_{22}),\quad h_{41}=d_{41}+\frac{d_{11}}{12}(3c_{31}+d_{22})-\frac{2d_{20}d_{13}}{3},\\[2ex]
h_{14}&=d_{14},\quad h_{23}=d_{23}-\frac{3d_{11}d_{04}}{2},\quad h_{32}=d_{32}-2d_{20}d_{04}-\frac{d_{11}d_{13}}{3}.
\end{aligned}
\end{array}
\end{equation*}

Setting $X_3=X_4+\frac{4e_{41}+h_{32}}{20}X_4^5+\frac{3e_{32}+h_{23}}{12}X_4^4Y_4+\frac{h_{05}}{4}X_4^2Y_4^3+\frac{h_{14}}{6}X_4^3Y_4^2$ and $Y_3=Y_4-e_{50}X_4^5+\frac{h_{32}}{4}X_4^4Y_4+\frac{h_{14}}{2}X_4^2Y_4^3+\frac{h_{23}}{3}X_4^3Y_4^2+h_{05}X_4Y_4^4$, system \eqref{3.8} can be written as
\begin{equation}
\begin{array}{ll}\label{3.9}
\left\{
\begin{aligned}
\dot{X}_4&=Y_4+\mathcal{O}(|X_4,Y_4|^{5}),\\[2ex]
\dot{Y}_4&=k_{20}X_4^2+k_{11}X_4Y_4+k_{30}X_4^3+k_{21}X_4^2Y_4+k_{40}X_4^4+k_{31}X_4^3Y_4\\[2ex]
&\quad+k_{50}X_4^5+k_{41}X_4^4Y_4+\mathcal{O}(|X_4,Y_4|^{5}),
\end{aligned}
\right.
\end{array}
\end{equation}
where
\begin{equation*}
\begin{array}{ll}
\begin{aligned}
k_{20}&=h_{20},\quad k_{11}=h_{11},\quad k_{30}=h_{30},\quad k_{21}=h_{21},\quad k_{40}=h_{40},\\[2ex]
k_{31}&=h_{31},\quad k_{50}=h_{50},\quad k_{41}=h_{41}+5e_{50}.
\end{aligned}
\end{array}
\end{equation*}
\end{proof}

For $b_{20}\neq0$ and $b_{11}=0$, i.e. $p=\check{p}:=\frac{\sqrt{z(k-z+kz)(k-1)}-z}{kz^k}$ and $k>\frac{1+2z}{1+z}$, $E_2^*$ (or $\bar{E}_0^*$ or $E_{30}^*$ or $E_{40}^*$ or $E_4^*$ or $\hat{E}_5^*$) is a nilpotent cusp of codimension at least 3 for system \eqref{1.5}.

Let
\begin{equation*}
\begin{array}{ll}
\begin{aligned}
\mathcal{G}&=-\frac{(k-1)\big((k-1)\sqrt{z}+\sqrt{(k-1)(k-z+kz)}\big)\sqrt[4]{z(k-1)(k-z+kz)}(\mathcal{G}_1+\mathcal{G}_2)}{3\sqrt{2}k^3z\big(k-z+kz-\sqrt{(k-1)z(k-z+kz)}\big)},\\[2ex]
\mathcal{F}&=-\frac{(k-1)^3\big((k-1)z+\sqrt{(k-1)z(k-z+kz)}\big)^2\sqrt[4]{z(k-1)(k-z+kz)}(\mathcal{F}_1+\mathcal{F}_2)}{36\sqrt{2}k^3z^{3/2}\big(k-z+kz-\sqrt{(k-1)z(k-z+kz)}\big)\big(z-kz+\sqrt{(k-1)z(k-z+kz)}\big)^3},\\[2ex]
\tilde{z}&=\frac{2-7k+7k^2-2k^3+\sqrt{2}\sqrt{2-17k+58k^2-101k^3+94k^4-44k^5+8k^6}}{6(k^3-4k^2+5k-2)},
\end{aligned}
\end{array}
\end{equation*}
where
\begin{equation*}
\begin{array}{ll}
\begin{aligned}
\mathcal{G}_1&=(k-1)\big((2z^2-2)k^3-(10z^2+5z-1)k^2+(16z^2+4z)k-8z^2\big),\\[2ex]
\mathcal{G}_2&=\sqrt{z(k-1)(k-z+kz)}\big((2z+2)k^3-(10z+1)k^2+16kz-8z\big),\\[2ex]
\mathcal{F}_1&=-z\big((26z^2+68z+42)k^5+(8z^2-79z-71)k^4-(214z^2+59z+35)k^3\\[2ex]
&\quad+(380z^2+106z-5)k^2-(280z^2+36z)k+80z^2\big),\\[2ex]
\mathcal{F}_2&=\sqrt{z(k-1)(k-z+kz)}\big((46z^2+62z+16)k^4-(106z^2+67z+22)k^3\\[2ex]
&\quad+(180z^2+10z+7)k^2-(200z^2-4z)k+80z^2\big).
\end{aligned}
\end{array}
\end{equation*}
\begin{lemma}\label{l6}
For $p=\check{p}$, then $\Lambda_0=\tilde{\Lambda}_0:=\frac{kz(k-z+kz)}{(k-1)\big(\sqrt{(k-1)(k-z+kz)z}-z\big)}$, $\eta=\tilde{\eta}:=\frac{k}{\sqrt{(k-1)(k-z+kz)z}-z+kz}$ and $\gamma=\tilde{\gamma}:=\frac{\big(kz-z+\sqrt{(k-1)(k-z+kz)z}\big)\big(k-z+kz+\sqrt{(k-1)(k-z+kz)z}\big)}{(k-1)k\big(\sqrt{(k-1)(k-z+kz)z}-z\big)}$, system \eqref{1.5} is locally topologically equivalent to
\begin{equation}
\begin{array}{ll}\label{3.10}
\left\{
\begin{aligned}
\dot{x}&=y,\\[2ex]
\dot{y}&=x^2+\mathcal{G}x^3y+\mathcal{F}x^4y+\mathcal{O}(|x,y|^{5}),
\end{aligned}
\right.
\end{array}
\end{equation}
where $\mathcal{G}$ and $\mathcal{F}$ are expressed by $b_{ij}$.
\end{lemma}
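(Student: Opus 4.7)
The plan is to build on Lemma~\ref{l5}: once the system has been reduced to (3.9), the hypothesis $p=\check{p}$ yields $k_{11}=h_{11}=d_{11}=b_{11}=0$, so the linear part at the equilibrium is the nilpotent Jordan block. Under the parameter restrictions of the lemma one has $b_{20}\neq 0$, so the quadratic leading term of $\dot Y_4$ is $b_{20}X_4^2$ and (3.9) is a nilpotent cusp-type singularity. The goal is therefore to bring its $5$-jet into the Takens normal form (3.10). I would begin by rescaling $(X_4,Y_4,t)\mapsto (\alpha x,\beta y,\gamma\tau)$ with $\alpha,\beta,\gamma$ chosen so that the coefficient of $y$ in $\dot x$ and the coefficient of $x^2$ in $\dot y$ both become $+1$; this is a single algebraic step that reduces the problem to a jet of the form $\dot x=y+O(|x,y|^5)$, $\dot y=x^2+\sum_{3\le i+j\le 5}\tilde k_{ij}x^iy^j+O(|x,y|^5)$.

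Next I would apply a sequence of homogeneous near-identity changes of variables $x=u+\phi_j(u,v)$, $y=v+\psi_j(u,v)$ for $j=2,3,4$, supplemented by a final time reparametrization $t\mapsto t(1+\omega(x))$. The classical nilpotent-cusp normal-form reduction (carried out as in Chapter~2 of~\cite{ZTW}) ensures that the $\phi_j,\psi_j$ can be chosen so as to kill the pure terms $x^3,\,x^4,\,x^5$ and the mixed term $x^2y$ in the $5$-jet of $\dot y$, leaving only $\mathcal{G}\,x^3y+\mathcal{F}\,x^4y$ modulo an $O(|x,y|^5)$ remainder. The time reparametrization then absorbs the remaining $O(|x,y|^5)$ term in $\dot x$, producing (3.10). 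To read off $\mathcal{G}$ and $\mathcal{F}$ I would propagate the effect of each transformation on the coefficients of $x^3y$ and $x^4y$: a degree-$j$ change of coordinates affects only the jet at orders $\geq j+1$, so $\mathcal{G}$ depends polynomially on $b_{20},b_{30},b_{21},b_{12},b_{03}$, while $\mathcal{F}$ depends polynomially on all $b_{ij}$ with $i+j\le 5$ together with $\mathcal{G}$.

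The conceptual content of the statement, i.e., the existence of the normal form, is classical; the real difficulty is bookkeeping. Each homogeneous near-identity change modifies many higher-degree coefficients at once, so after writing $\mathcal{G}$ and $\mathcal{F}$ in terms of the $b_{ij}$ one must then substitute the explicit algebraic values $p=\check{p}$, $\Lambda_0=\tilde{\Lambda}_0$, $\eta=\tilde{\eta}$, $\gamma=\tilde{\gamma}$, which already contain nested radicals in $z$ and $k$, and simplify to obtain the closed-form expressions displayed before the lemma. I would therefore carry out Steps~1 and~2 abstractly to exhibit the normal form, and delegate the final closed-form substitution to a computer algebra system. A side check to include in the proof is that $b_{20}\neq 0$ and that the pivot coefficients used in each elimination step remain nonzero along the curve $p=\check{p}$, so that the normal-form reduction is valid in a full neighborhood of the cusp and not merely at the formal-series level.
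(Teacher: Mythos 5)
Your proposal is correct and follows essentially the same route as the paper: starting from the reduced system (3.9) with $k_{11}=b_{11}=0$ and $k_{20}=b_{20}\neq 0$, the paper likewise applies a chain of near-identity coordinate changes (killing $x^2y$ and the higher mixed terms), a combined coordinate change plus time reparametrization to remove the pure $x^3,x^4,x^5$ terms, and a final rescaling using $b_{20}=-\frac{\sqrt{z(kz+k-z)(k-1)}}{2z}<0$ to normalize the $x^2$ coefficient. The only cosmetic difference is that you rescale first while the paper rescales last, and, as you anticipate, the coefficients $\mathcal{G},\mathcal{F}$ come out as rational (not polynomial) expressions in the $b_{ij}$ with powers of $b_{20}$ in the denominators, which is exactly why your nonvanishing check on the pivot coefficients is needed.
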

\begin{proof}
Step 1: $k_{20}=b_{20}\neq0$ when $p=\check{p}$. Setting $X_4=x+\frac{k_{21}}{3k_{20}}xy+\frac{5k_{21}^2}{54k_{20}}x^4$, $Y_4=y+\frac{k_{21}}{3k_{20}}y^2+\frac{k_{21}}{3}x^3+\frac{k_{21}k_{30}}{3k_{20}}x^4+\frac{10k_{21}^2}{27k_{20}}x^3y$, system \eqref{3.9} can be transformed into
\begin{equation}
\begin{array}{ll}\label{3.11}
\left\{
\begin{aligned}
\dot{x}&=y+r_{50}x^5+r_{41}x^4y+O(|x,y|^5),\\[2ex]
\dot{y}&=s_{20}x^2+s_{30}x^3+s_{40}x^4+s_{31}x^3y+s_{50}x^5\\[2ex]
&\quad+s_{41}x^4y+s_{32}x^3y^2+s_{23}x^2y^3+O(|x,y|^5),
\end{aligned}
\right.
\end{array}
\end{equation}
where
\begin{equation*}
\begin{array}{ll}
\begin{aligned}
r_{50}&=-\frac{k_{11}k_{21}^2+3k_{21}k_{40}}{9k_{20}},\quad r_{41}=\frac{k_{21}(k_{21}k_{30}-k_{20}k_{31})}{3k_{20}^2},\quad s_{31}=k_{31}-\frac{k_{21}k_{30}}{k_{20}},\\[2ex]
s_{20}&=k_{20},\quad s_{30}=k_{30},\quad s_{50}=k_{50}+\frac{k_{11}k_{21}k_{30}}{3k_{20}}+\frac{4k_{21}^2}{27},\quad s_{41}=k_{41}+\frac{2k_{21}k_{40}}{3k_{20}},\\[2ex]
s_{40}&=k_{40},\quad s_{32}=\frac{k_{21}(3k_{21}k_{30}+2k_{20}k_{31})}{3k_{20}^2},\quad s_{23}=\frac{k_{21}^3}{3k_{20}^2}.
\end{aligned}
\end{array}
\end{equation*}

Step 2: Letting $x=X+\frac{4r_{41}+s_{32}}{20}X^5+\frac{s_{23}}{12}X^4Y$, $y=Y-r_{50}X^5+\frac{s_{32}}{4}X^4Y+\frac{s_{23}}{3}X^3Y^2$, system \eqref{3.11} can be written as
\begin{equation}
\begin{array}{ll}\label{3.12}
\left\{
\begin{aligned}
\dot{X}&=Y+O(|X,Y|^5),\\[2ex]
\dot{Y}&=w_{20}X^2+w_{30}X^3+w_{40}X^4+w_{31}X^3Y+w_{50}X^5+w_{41}X^4Y+O(|X,Y|^5),
\end{aligned}
\right.
\end{array}
\end{equation}
where
\begin{equation*}
\begin{array}{ll}
\begin{aligned}
w_{20}&=s_{20},\quad w_{30}=s_{30},\quad w_{40}=s_{40},\quad w_{31}=s_{31},\quad w_{50}=s_{50},\quad w_{41}=s_{41}+5r_{50}.
\end{aligned}
\end{array}
\end{equation*}

Step 3: Making the following transformation
\begin{equation*}
\begin{array}{ll}
\begin{aligned}
X&=x-\frac{w_{30}}{4w_{20}}x^2+\frac{15w_{30}^2-16w_{20}w_{40}}{80w_{20}^2}x^3-\frac{175w_{30}^3-336w_{20}w_{30}w_{40}+160w_{20}^2w_{50}}{960w_{20}^3}x^4,\\[2ex]
Y&=y,\\[2ex] t&=(1-\frac{w_{30}}{2w_{20}}x+\frac{45w_{30}^2-48w_{20}w_{40}}{80w_{20}^2}x^2-\frac{175w_{30}^3-336w_{20}w_{30}w_{40}+160w_{20}^2w_{50}}{240w_{20}^3}x^3)\tau.
\end{aligned}
\end{array}
\end{equation*}
System \eqref{3.12} can be changed into (still denote $\tau$ by $t$)
\begin{equation}
\begin{array}{ll}\label{3.13}
\left\{
\begin{aligned}
\dot{x}&=y+O(|x,y|^5),\\[2ex]
\dot{y}&=p_{20}x^2+p_{31}x^3y+p_{41}x^4y+O(|x,y|^5),
\end{aligned}
\right.
\end{array}
\end{equation}
where
\begin{equation*}
\begin{array}{ll}
\begin{aligned}
v_{20}=w_{20},\quad v_{31}=w_{31},\quad v_{41}=w_{41}-\frac{5w_{30}w_{31}}{4w_{20}}.
\end{aligned}
\end{array}
\end{equation*}

Step 4: From above analysis, we have $v_{20}=b_{20}=-\frac{\sqrt{z(kz+k-z)(k-1)}}{2z}<0$, setting $x=-X$,$y=-\sqrt{-v_{20}}Y$ and $t=\frac{1}{\sqrt{-v_{20}}}\tau$, system \eqref{3.13} can be written as (still denote $\tau$ by $t$)
\begin{equation}
\begin{array}{ll}\label{3.14}
\left\{
\begin{aligned}
\dot{X}&=Y+O(|X,Y|^5),\\[2ex]
\dot{Y}&=X^2+\mathcal{G}X^3Y+\mathcal{F}X^4Y+O(|X,Y|^5),
\end{aligned}
\right.
\end{array}
\end{equation}

(i) For $E_2^*$, we have $\mathcal{G}>0$ when $\frac{1+2z}{1+z}<k<2$, thus $E_2^*$ is a cusp of codimension 3.

(ii) For $\bar{E}_0^*$, we have $\mathcal{G}=\frac{1}{2\sqrt{2z}\sqrt[4]{z(2+z)}}>0$, thus $\bar{E}_0^*$ is a cusp of codimension 3.

(iii) For $E_{30}^*$ (or $E_{40}^*$ or $E_4^*$ or $\hat{E}_5^*$), if $\mathcal{G}\neq0$, then $E_{30}^*$ (or $E_{40}^*$ or $E_4^*$ or $\hat{E}_5^*$) is a a cusp of codimension. If $\mathcal{G}=0$, i.e., $z=\tilde{z}$, then $\mathcal{F}<0$ when $2<k<5.5745$, i.e., $E_{30}^*$ (or $E_{40}^*$ or $E_4^*$ or $\hat{E}_5^*$) is a cusp of codimension 4.
\end{proof}
\begin{theorem}\label{t3}
For system \eqref{1.5}, the following statements hold.

(i) If $\frac{1+2z}{1+z}<k\leq2$, then  $E_2^*$ ($\bar{E}_0^*$) is a cusp of codimension 3;

(ii) If $k>2$ and $z\neq\tilde{z}$, then $\mathcal{G}\neq0$, i.e., $E_{30}^*$ (or $E_{40}^*$ or $E_4^*$ or $\hat{E}_5^*$) is a cusp of codimension 3. If $z=\tilde{z}$, then $\mathcal{G}=0$ and $\mathcal{F}<0$ when $2<k<5.5745$, i.e., $E_{30}^*$ (or $E_{40}^*$ or $E_4^*$ or $\hat{E}_5^*$) is a cusp of codimension 4.
\end{theorem}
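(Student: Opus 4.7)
The statement is effectively the packaged conclusion of Lemma~\ref{l6}, so the plan is to combine the normal form \eqref{3.10} with the standard codimension classification of a nilpotent cusp singularity. Recall that once a planar $C^{\infty}$ system has been reduced, after a near-identity change of coordinates and a time rescaling, to the Bogdanov--Takens pre-normal form
\begin{equation*}
\dot{X}=Y+\mathcal{O}(|X,Y|^5),\qquad \dot{Y}=X^2+a_3X^3Y+a_4X^4Y+\mathcal{O}(|X,Y|^5),
\end{equation*}
the origin is a cusp of codimension $3$ whenever $a_3\neq0$, and, in the degenerate case $a_3=0$, a cusp of codimension $4$ provided $a_4\neq0$. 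Identifying $a_3=\mathcal{G}$ and $a_4=\mathcal{F}$ from Lemma~\ref{l6} reduces the whole theorem to a sign analysis of the explicit rational--radical expressions $\mathcal{G}$ and, when $\mathcal{G}$ vanishes, of $\mathcal{F}$ evaluated at the specific value $z=\tilde{z}$.

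For part (i), we must verify $\mathcal{G}>0$ under two separate regimes. In the case of $E_2^*$ with $1<k<2$, the auxiliary constraint $k>\tfrac{1+2z}{1+z}$ (equivalently $0<z<\tfrac{k-1}{2-k}$) comes from requiring $\check{p}>0$; after factoring out the manifestly positive prefactor of $\mathcal{G}$, the sign reduces to that of $\mathcal{G}_1+\mathcal{G}_2$, and for $1<k<2$ and $z$ in the admissible range one groups terms by powers of $k$ to see that each bracket assembles to a positive quantity. For $\bar{E}_0^*$ (i.e. $k=2$) the substitution collapses $\mathcal{G}$ to the explicit expression $\mathcal{G}=\tfrac{1}{2\sqrt{2z}\sqrt[4]{z(2+z)}}$, which is obviously positive for $z>0$. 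In both sub-cases the classification then yields codimension $3$.

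For part (ii), with $k>2$, one first solves $\mathcal{G}(z,k)=0$ in $z$. Moving one radical in $\mathcal{G}_1+\mathcal{G}_2$ to the opposite side and squaring produces a polynomial equation in $z$ whose unique positive root (for each fixed $k>2$) is precisely the $\tilde{z}$ displayed before Lemma~\ref{l6}; extraneous roots introduced by squaring are eliminated using the sign constraint $z-kz+\sqrt{(k-1)z(k-z+kz)}>0$ built into the denominator of $\mathcal{G}$. Consequently $\mathcal{G}\neq0$ whenever $z\neq\tilde{z}$, giving codimension $3$ at $E_{30}^*$, $E_{40}^*$, $E_4^*$ or $\hat{E}_5^*$ by the classification quoted above. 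When $z=\tilde{z}$, substitution turns $\mathcal{F}$ into a single-variable function of $k$; showing $\mathcal{F}<0$ on $2<k<5.5745$ then produces a nondegenerate codimension $4$ cusp.

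The main obstacle is this last sign verification. The difficulty is not conceptual but algebraic: after inserting $z=\tilde{z}$, which already carries the radical $\sqrt{2-17k+58k^2-101k^3+94k^4-44k^5+8k^6}$, into $\mathcal{F}_1+\mathcal{F}_2$, the resulting function of $k$ has no transparent closed form. The cleanest route is to introduce auxiliary variables $w=\sqrt{(k-1)z(k-z+kz)}$ and $W=\sqrt{2-17k+58k^2-101k^3+94k^4-44k^5+8k^6}$, use the polynomial identities $w^2=(k-1)z(k-z+kz)$ and $W^2=2-17k+\cdots$ to eliminate the radicals from $\mathcal{F}$, and obtain a polynomial relation in $k$, $w$ and $W$. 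A Sturm sequence or certified numerical sign analysis on the resulting univariate polynomial in $k$ then isolates its largest real root near $k\approx 5.5745$ and confirms the sign on the interval $(2,\,5.5745)$. Once this step is complete, part (ii) follows immediately.
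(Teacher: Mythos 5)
Your proposal follows essentially the same route as the paper: Theorem~\ref{t3} is obtained there as the direct packaging of the sign analysis of $\mathcal{G}$ and $\mathcal{F}$ carried out at the end of the proof of Lemma~\ref{l6}, using the normal form \eqref{3.10} together with the standard classification of a nilpotent cusp (codimension $3$ if the $x^3y$ coefficient is nonzero, codimension $4$ if it vanishes but the $x^4y$ coefficient does not). Your additional detail on how to certify the signs (radical elimination via auxiliary variables, Sturm sequences near $k\approx 5.5745$) goes beyond what the paper writes down, but the underlying argument is identical.
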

When $k>2$, $\eta=\bar{\eta}_1$ and $b_{20}=0$, i.e., $p=\hat{p}$, we further investigate the degenerate equilibrium $E_4^*$.

\begin{theorem}\label{t4}
For $\Lambda_0=\tilde{\bar{\Lambda}}_0:=\frac{k(k-2x_4^*+2kx_4^*)}{2(k-2)(k-1)}$, $\gamma=\tilde{\bar{\gamma}}:=\frac{2(k-2x_4^*+2kx_4^*)}{k(k-2)}$, $p=\hat{p}$, $\eta=\bar{\eta}_1$, $k>2$ and $x_4^*>\frac{k(\sqrt{2k-3}-1)}{4(k-1)}$, system \eqref{1.5} has a unique positive equilibrium $E_4^*$ and system \eqref{1.5} is locally topologically equivalent to
\begin{equation}
\begin{array}{ll}\label{3.19}
\left\{
\begin{aligned}
\dot{x}&=y+O(|x,y|^5),\\[2ex]
\dot{y}&=\mathcal{M}xy-x^3+x^2y+\mathcal{N}x^3y+O(|x,y|^4),
\end{aligned}
\right.
\end{array}
\end{equation}
where
\begin{equation*}
\begin{array}{ll}
\begin{aligned}
\mathcal{M}&=-\frac{\sqrt{3}k}{\sqrt{(k-2)(k-2x_4^*+2kx_4^*)}},\\[2ex]
\mathcal{N}&=\frac{\sqrt{(k-2)(k-2x_4^*+2kx_4^*)}(195k^3x_4^*-665k^2x_4^*+630kx_4^*-160x_4^*+44k^3-87k^2-6k)}{3\sqrt{3}(10k^2x_4^*-30kx_4^*+20x_4^*+2k^2-3k)^2}.
\end{aligned}
\end{array}
\end{equation*}
Besides,

(i) For $2<k\leq k_0$ ($k_0\approx 2.80425$) and $x_4^*>\frac{16k-5k^2}{16k^2-48k+32}$ or $k>k_0$ and $x_4^*>\frac{k(\sqrt{2k-3}-1)}{4(k-1)}$,  $E_4^*$ is a nilpotent focus of codimension 3;

(ii) For $2<k\leq k_0$ and $\frac{k(\sqrt{2k-3}-1)}{4(k-1)}<x_4^*<\frac{16k-5k^2}{16k^2-48k+32}$,  $E_4^*$  is a nilpotent elliptic equilibrium of codimension 3;

(iii) For $x_4^*=\frac{k(16-5k)}{(k-1)(k-2)}$ and $2<k<k_0$,  $E_4^*$ is a nilpotent elliptic equilibrium of codimension at least 4, the phase portrait is given in Fig.\ref{w10}.
\end{theorem}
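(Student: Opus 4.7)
The plan is to extend the normal-form reductions of Lemmas \ref{l5} and \ref{l6} beyond degree 3 in order to bring system \eqref{1.5} at $E_4^*$ into the form \eqref{3.19}, and then apply the classical classification of nilpotent singularities. The four parameter constraints $\Lambda_0=\tilde{\bar{\Lambda}}_0$, $\gamma=\tilde{\bar{\gamma}}$, $p=\hat{p}$, $\eta=\bar{\eta}_1$ are exactly those needed to (a) place $E_4^*$ on the degenerate locus $H(z)=H'(z)=0$ of Theorem \ref{t1}, (b) kill the leading quadratic coefficient $\xi_{20}$ of \eqref{3.2} (this is the role of $p=\hat{p}$), and (c) kill the further coefficient $b_{20}$ of \eqref{3.5} (this is the role of $\eta=\bar{\eta}_1$). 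Uniqueness of $E_4^*$ then follows from part \textbf{(i)} of Lemma \ref{l4} together with Case 5 of Section 2, and the bound $x_4^*>\frac{k(\sqrt{2k-3}-1)}{4(k-1)}$ is precisely the admissibility condition for the positive equilibrium.

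To obtain \eqref{3.19}, I would carry out a sequence of near-identity polynomial substitutions, analogous to Steps 1--4 in the proof of Lemma \ref{l6}, that eliminate all non-resonant monomials up through degree 4, followed by a rescaling of $y$ and $t$ normalizing the coefficient of $x^3$ to $-1$ and the coefficient of $x^2y$ to $1$. The only surviving invariants are then $\mathcal{M}$ (coefficient of $xy$) and $\mathcal{N}$ (coefficient of $x^3y$). Carrying this out symbolically in the original parameters yields the closed-form expressions for $\mathcal{M}$ and $\mathcal{N}$ stated in the theorem; in particular one obtains $\mathcal{M}^2 = \frac{3k^2}{(k-2)(k+2(k-1)x_4^*)}$.

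The classification then follows from the criterion of Theorem 7.3 of Chapter 2 in \cite{ZTW}: for the normal form \eqref{3.19}, with leading polynomial part $-x^3$ and leading $y$-coefficient $\mathcal{M}x$, the sign of the discriminant $\Delta := \mathcal{M}^2 - 8$ determines the local phase portrait, namely a nilpotent focus for $\Delta<0$, a nilpotent elliptic point for $\Delta>0$, and a boundary degeneracy for $\Delta=0$. Solving $\mathcal{M}^2=8$ for $x_4^*$ gives the critical threshold $x_4^* = \frac{16k-5k^2}{16(k-1)(k-2)}$; the three sub-cases (i)--(iii) then correspond to $x_4^*$ lying above, below, or exactly on this threshold (intersected with the admissibility range). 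The constant $k_0\approx 2.80425$ appears as the unique positive root of $32k^3-177k^2+336k-256=0$, i.e.\ the value of $k$ at which the focus/elliptic threshold meets the admissibility boundary $x_4^*=\frac{k(\sqrt{2k-3}-1)}{4(k-1)}$; for $k>k_0$ the elliptic regime becomes inaccessible. Codimension 3 in (i)--(ii) follows from the nonvanishing of $\mathcal{M}$ and of the normalized $x^2y$ coefficient, while codimension 4 in (iii) additionally requires $\mathcal{N}\ne 0$ on the degeneracy curve.

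The main obstacle is the bookkeeping in the degree-4 normal form: once both $b_{20}=0$ and $\mathcal{M}^2=8$ are imposed simultaneously, several cubic and quartic resonances compete, and obtaining the closed formula for $\mathcal{N}$, together with verifying its nonvanishing for generic $k\in(2,k_0)$ so that the codimension in (iii) really is $4$, requires careful symbolic manipulation. All other steps (existence and uniqueness of $E_4^*$, the substitutions through degree 3, and the application of the nilpotent classification theorem) are essentially routine once the parameter constraints are in place.
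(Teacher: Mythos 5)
Your proposal is correct and follows essentially the same route as the paper: reduce to the nilpotent normal form by the near-identity substitutions of Lemmas \ref{l5}--\ref{l6} plus a final rescaling, then classify via the standard discriminant for $\dot x=y,\ \dot y=ax^3+bxy+\cdots$ with $a<0$. Your criterion $\mathcal{M}^2-8\lessgtr 0$ is exactly the paper's $b_{11}^2+8b_{30}\lessgtr 0$ (the sign is invariant under the rescalings used), your threshold $x_4^*=\frac{16k-5k^2}{16(k-1)(k-2)}$ matches the paper's $\frac{16k-5k^2}{16k^2-48k+32}$, and your identification of $k_0$ as the root of $32k^3-177k^2+336k-256=0$ (where this threshold meets the admissibility bound $\gamma>\eta$, i.e. $x_4^*>\frac{k(\sqrt{2k-3}-1)}{4(k-1)}$) correctly reconstructs a constant the paper states without derivation.
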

\begin{figure}
\begin{center}
\begin{overpic}[scale=0.50]{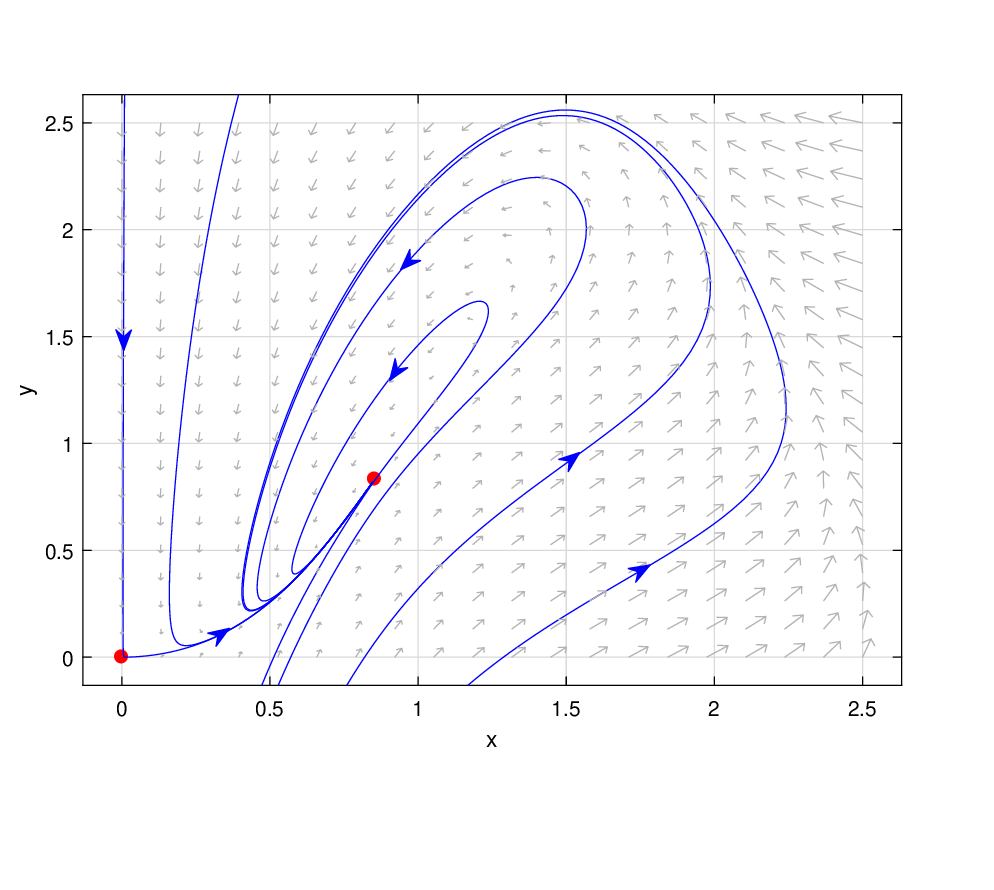}
\end{overpic}
\put(-220,61){$E_0$}
\put(-150,93){$E_4^*$}
\vspace{-15mm}
\end{center}
\caption{The local dynamics of system \eqref{1.5} with an elliptic type endemic equilibrium $E_4^*$ of codimension at least 4, where $p=\frac{192\sqrt{3}}{175\sqrt{35}}$, $\Lambda_0=\frac{125}{16}$, $\gamma=\frac{15}{2}$, $\eta=\frac{8}{7}$ and $k=\frac{5}{2}$.\label{w10}}
\end{figure}
\begin{proof}
If $H(x_4^*)=H'(x_4^*)=H''(x_4^*)=0$ and $b_{20}=0$, we have $\Lambda_0=\tilde{\bar{\Lambda}}_0$, $\gamma=\tilde{\bar{\gamma}}_0$, $p=\hat{p}$ and $\eta=\bar{\eta}_1$. Making the following transformations successively
\begin{equation*}
\begin{array}{ll}
\begin{aligned}
x&=u+x_4^*,\quad y=v+\frac{kx_4^*}{2x_4^*(k-1)};\\[2ex]
u&=\frac{2x_4^*(k-1)}{k}X+\frac{2x_4^*(k-1)}{k}Y,\quad v=X,
\end{aligned}
\end{array}
\end{equation*}
system \eqref{1.5} can be changed into
\begin{equation}
\begin{array}{ll}\label{3.20}
\left\{
\begin{aligned}
\dot{X}&=Y+\mathcal{O}(|X,Y|^{5}),\\[2ex]
\dot{Y}&=\sum\limits_{2\leq i+j\leq5}\tilde{a}_{ij}X^{i}Y^{j}+\mathcal{O}(|X,Y|^{5}),
\end{aligned}
\right.
\end{array}
\end{equation}
where $\tilde{a}_{ij}$ are omitted here for brevity.

Similar to Lemma \ref{l5}, system \eqref{3.20} can be transformed into system \eqref{3.6}. From above analysis, we know that $k_{30}=b_{30}\neq0$, letting $X_4=X-\frac{k_{40}}{5k_{30}}$, $Y_4=Y$ and $t=(1-\frac{2k_{40}}{5k_{30}})\tau$ (still denote $\tau$ by $t$), system \eqref{3.6} can be transformed into
\begin{equation}
\begin{array}{ll}\label{3.21}
\left\{
\begin{aligned}
\dot{X}&=Y+O(|X,Y|^4),\\[2ex]
\dot{Y}&=q_{11}XY+q_{30}X^3+q_{21}X^2Y+q_{31}X^3Y+O(|X,Y|^4),
\end{aligned}
\right.
\end{array}
\end{equation}
where
\begin{equation*}
\begin{array}{ll}
\begin{aligned}
q_{11}=k_{11},\quad q_{30}=k_{30}-\frac{4k_{20}k_{40}}{5k_{30}},\quad q_{21}=k_{21}-\frac{3k_{11}k_{40}}{5k_{30}},\quad q_{31}=k_{31}-\frac{4k_{21}k_{40}}{5k_{30}}+\frac{2k_{11}k_{40}^2}{25k_{30}^2}.
\end{aligned}
\end{array}
\end{equation*}

Notice $q_{30}<0$ and $q_{21}\neq0$, making $X=\frac{\sqrt{-q_{30}}}{q_{21}}x$, $Y=\frac{q_{30}\sqrt{-q_{30}}}{q_{21}^2}y$ and $t=\frac{q_{21}}{q_{30}}\tau$ (still denote $\tau$ by $t$), system \eqref{3.21} becomes
\begin{equation}
\begin{array}{ll}\label{2.22}
\left\{
\begin{aligned}
\dot{x}&=y+O(|x,y|^4),\\[2ex]
\dot{y}&=\mathcal{M}xy-x^3+x^2y+\mathcal{N}x^3y+O(|x,y|^4),
\end{aligned}
\right.
\end{array}
\end{equation}
where
\begin{equation*}
\begin{array}{ll}
\begin{aligned}
\mathcal{M}&=-\frac{\sqrt{3}k}{\sqrt{(k-2)(k-2x_4^*+2kx_4^*)}},\\[2ex]
\mathcal{N}&=\frac{\sqrt{(k-2)(k-2x_4^*+2kx_4^*)}(195k^3x_4^*-665k^2x_4^*+630kx_4^*-160x_4^*+44k^3-87k^2-6k)}{3\sqrt{3}(10k^2x_4^*-30kx_4^*+20x_4^*+2k^2-3k)^2}.
\end{aligned}
\end{array}
\end{equation*}
From above analysis, we have $p=\hat{p}$ when $b_{20}=0$. Through calculation, we obtain $b_{11}=k-1\neq0$, $b_{30}=\frac{(k-1)^2(k-2)(2x_4^*-k-2kx_4^*)}{3k^2}<0$ and $b_{11}^2+8b_{30}=-\frac{(k-1)^2(16k^2x_4^*-48kx_4^*+32x_4^*+5k^2-16k)}{3k^2}$ when $k>2$ and $x_4^*>\frac{k(\sqrt{2k-3}-1)}{4(k-1)}$.

(i) For $2<k\leq k_0$ ($k_0\approx 2.80425$) and $x_4^*>\frac{16k-5k^2}{16k^2-48k+32}$ or $k>k_0$ and $x_4^*>\frac{k(\sqrt{2k-3}-1)}{4(k-1)}$, we have $b_{30}<0$ and $b_{11}^2+8b_{30}<0$;

(ii) For $2<k\leq k_0$ and $\frac{k(\sqrt{2k-3}-1)}{4(k-1)}<x_4^*<\frac{16k-5k^2}{16k^2-48k+32}$, we have $b_{30}<0$ and $b_{11}^2+8b_{30}>0$;

(iii) For $x_4^*=\frac{k(16-5k)}{(k-1)(k-2)}$ and $2<k<k_0$, we have $b_{30}=-\frac{1}{8}(k-1)^2<0$ and $b_{11}^2+8b_{30}=0$.

By Khibnik, Krauskopf and Rousseau \cite{CB}, Dumortier, Fiddelaers and Li \cite{CP} and Lemma 3.1 in \cite{DG}, we obtain the results.
\end{proof}

\section{Nilpotent bifurcations}
In this section, we mainly focus on nilpotent bifurcations  for system \eqref{1.5}.
\subsection{Saddle-node bifurcation}
It follows from Theorem \ref{t1} that
\begin{equation*}
\begin{array}{ll}
\begin{aligned}
SN_1&=\bigg\{(\Lambda_0,\gamma,\eta,p,z,k):\Lambda_0=\hat{\Lambda}_0:=\frac{(1+\eta)(z+kpz^k)}{z^{k-1}p(k-1)}, \gamma=\hat{\gamma}:=\frac{(1+\eta)(z+pz^k)^2}{z^{k}p(k-1)}>\eta,\\[2ex]
&\qquad\eta\neq\frac{1}{z+pz^k}, z>0, p>0, 1<k<2\bigg\}
\end{aligned}
\end{array}
\end{equation*}
\begin{equation*}
\begin{array}{ll}
\begin{aligned}
SN_2&=\bigg\{(\Lambda_0,\gamma,\eta,p,z,k):\Lambda_0=\hat{\Lambda}_0:=\frac{(1+\eta)(z+2pz^2)}{pz}, \gamma=\hat{\gamma}:=\frac{(1+\eta)(z+pz^2)^2}{pz}>\eta,\\[2ex]
&\qquad\eta\neq\frac{1}{z+pz^2}, z>0, p>0, k=2\bigg\}
\end{aligned}
\end{array}
\end{equation*}
\begin{equation*}
\begin{array}{ll}
\begin{aligned}
SN_3&=\bigg\{(\Lambda_0,\gamma,\eta,p,z,k):\Lambda_0=\hat{\Lambda}_0:=\frac{(1+\eta)(z+kpz^k)}{z^{k-1}p(k-1)}, \gamma=\hat{\gamma}:=\frac{(1+\eta)(z+pz^k)^2}{z^{k}p(k-1)}>\eta,\\[2ex]
&\qquad\eta\neq\frac{1}{z+pz^k}, z>0, p\neq\frac{k-2}{kz^{k-1}}, k>2\bigg\}
\end{aligned}
\end{array}
\end{equation*}
are saddle-node bifurcation surfaces. When the parameters vary from one side of the surface $SN_i\ (i=1,2,3)$ to the other side, the number of equilibria of system \eqref{1.5} changes from zero to two, and the two equilibria are a hyperbolic saddle and a node. When $\hat{\Lambda}_0$ and $\hat{\gamma}$ vary, infectious diseases will disappear when $R_0=\frac{\hat{\Lambda}_0}{\hat{\gamma}}<1$ and infectious diseases will form endemic diseases and persist when $\frac{\hat{\Lambda}_0}{\hat{\gamma}}>1$.
\begin{theorem}\label{l7}
System \eqref{1.5} can undergo saddle-node bifurcation as $(\Lambda_0,\gamma,\eta,p,z,k)$ varies near
 $SN_1$ or $SN_2$ or $SN_3$.
\end{theorem}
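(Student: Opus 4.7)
The plan is to apply Sotomayor's saddle-node bifurcation theorem at the degenerate equilibria already identified in Theorem \ref{t1}. The nondegeneracy of the quadratic term has essentially been done: the normal form computation in the proof of Theorem \ref{t1} yields $\dot{u}=\xi_{20}u^2+O(|u|^3)$ with $\xi_{20}\neq0$ under each of the hypotheses defining $SN_1$, $SN_2$, and $SN_3$. What remains is to pick a one-parameter unfolding, verify that the vector field depends transversally on that parameter, and conclude the existence of the saddle-node branch.

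Concretely, I would select $\Lambda_0$ as the bifurcation parameter (with the remaining parameters $\gamma,\eta,p,k,z$ fixed at values on $SN_i$) and write the right-hand side of \eqref{1.5} as $F(x,y;\Lambda_0)$. At the degenerate equilibrium $E=(z,\eta z)$, the Jacobian has a simple zero eigenvalue whenever $\eta\neq\hat{\eta}$ (for $SN_1$, $SN_2$) or additionally $p\neq\hat{p}$ (for $SN_3$); this is exactly what was established in Theorem \ref{t1}. Let $V=(V_1,V_2)^{T}$ be a right null vector and $W=(W_1,W_2)^{T}$ a left null vector of $J(E)$ evaluated on the surface. Using the explicit Jacobian
\[
J(E)=\begin{pmatrix}\eta(z+pz^{k})&-(z+pz^{k})\\[1mm]\eta&-1\end{pmatrix},
\]
one can take $V=(\tfrac{1}{\eta},1)^{T}$ and $W=(1,z+pz^{k})^{T}$ up to scaling.

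The two Sotomayor conditions then reduce to
\[
W^{T}F_{\Lambda_0}(E;\hat{\Lambda}_0)\neq0,\qquad W^{T}\bigl(D^{2}_{(x,y)}F(E;\hat{\Lambda}_0)(V,V)\bigr)\neq0.
\]
For the first, $F_{\Lambda_0}=\bigl(0,0\bigr)$ after reduction to \eqref{1.5}? No — in fact $\Lambda_0$ appears only in the first equation through the factor $(\Lambda_0-x-y)$, so $F_{\Lambda_0}(E;\hat{\Lambda}_0)=(z+pz^{k},0)^{T}$, giving $W^{T}F_{\Lambda_0}=z+pz^{k}>0$ for every point on $SN_1\cup SN_2\cup SN_3$. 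The second expression, modulo a nonzero scalar, equals the coefficient $\xi_{20}$ already computed, and the signs recorded in the proof of Theorem \ref{t1} show $W^{T}(D^{2}F(V,V))\neq0$ in each case: $\xi_{20}>0$ on $SN_1$ and $SN_2$, and $\xi_{20}\neq0$ on $SN_3$ precisely because of the exclusion $p\neq\hat{p}$.

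The statement then follows from Sotomayor's theorem, giving a local curve of equilibria in $(x,y,\Lambda_0)$-space that folds at $(z,\eta z,\hat{\Lambda}_0)$; on one side of $SN_i$ there are no equilibria near $E$, and on the other side there are two, a hyperbolic saddle and a hyperbolic node, exactly matching the epidemiological interpretation through $R_{0}=\Lambda_{0}/\gamma$. The main obstacle, such as it is, is organizational rather than technical: one must present the verification uniformly across the three surfaces $SN_{1},SN_{2},SN_{3}$, since the only difference between the cases is which combination of $k$ and $p$ rules out the vanishing of $\xi_{20}$, while the transversality computation $W^{T}F_{\Lambda_0}=z+pz^{k}$ is identical throughout.
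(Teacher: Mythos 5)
Your proposal is correct, and it is in fact more complete than what the paper provides: the paper offers no proof of Theorem~\ref{l7} at all, simply asserting after the definition of $SN_1$, $SN_2$, $SN_3$ that the result ``follows from Theorem~\ref{t1}'' and describing the change in the number of equilibria. Theorem~\ref{t1} only establishes that the degenerate equilibrium is a saddle-node \emph{for fixed parameters on the surface} (via $\xi_{20}\neq0$ on the center manifold); it does not verify that the family \eqref{1.5} unfolds this singularity transversally, which is exactly the gap your Sotomayor-based argument fills with the computation $W^{T}F_{\Lambda_0}(E)=z+pz^{k}>0$. Your identification of the second Sotomayor condition with $\xi_{20}\neq0$ is also sound, since the center-manifold quadratic coefficient is a nonzero multiple of $W^{T}D^{2}F(V,V)$ precisely when $W^{T}V=\tfrac{1}{\eta}-(z+pz^{k})\neq0$, i.e.\ when $\eta\neq\hat{\eta}$, which is built into each $SN_i$. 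One small slip: the left null vector of $J(E)$ is $W=(1,-(z+pz^{k}))^{T}$, not $(1,z+pz^{k})^{T}$; this is immaterial here because both $F_{\Lambda_0}$ and $D^{2}F(V,V)$ have vanishing second component (the $\dot{y}$ equation is linear and $\Lambda_0$-independent), so only $W_1$ enters either transversality expression, but you should correct it in a written version.
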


\subsection{Bogdanov-Takens bifurcation}
From Theorem \ref{t3} we can know that system \eqref{1.5} may exhibit cusp type Bogdanov-Takens bifurcation of codimension 3 around $E_2^*$ (or $\bar{E}_0^*$) when $1<k\leq2$ and cusp type Bogdanov-Takens bifurcation of codimension 4 around $E_{30}^*$ (or $E_{40}^*$ or $E_4^*$ or $\hat{E}_5^*$) when $k>2$. Here, we have omitted the analysis of the cusp type Bogdanov-Takens bifurcation of codimension 3. In order to make sure if such a bifurcation can be fully unfolding inside the class of system \eqref{1.5}, we choose $\Lambda_0$, $\gamma$, $\eta$ and $p$ as bifurcation parameters, and consider the following system
\begin{equation}
\begin{array}{ll}\label{4.1}
\left\{
\begin{aligned}
\dot{x}&=x(1+(\check{p}+\lambda_1)x^{k-1})(\tilde{\Lambda}_0+\lambda_2-x-y)-(\tilde{\gamma}+\lambda_3)x,\\[2ex]
\dot{y}&=(\tilde{\eta}+\lambda_4)x-y,
\end{aligned}
\right.
\end{array}
\end{equation}
where $\lambda=(\lambda_1,\lambda_2,\lambda_3,\lambda_4)=(0,0,0,0)$. If we can transform system \eqref{3.1} into the following form
\begin{equation}
\begin{array}{ll}\label{4.2}
\left\{
\begin{aligned}
\dot{x}&=y,\\[2ex]
\dot{y}&=\eta_1+\eta_2y+\eta_3xy+\eta_4x^3y+x^2-x^4y+R(x,y,\lambda),
\end{aligned}
\right.
\end{array}
\end{equation}
where
\begin{equation}
\begin{array}{ll}\label{4.3}
\begin{aligned}
R(x,y,\lambda)&=y^2O(|x,y|^2)+O(|x,y|^6)+O(\lambda)\big(O(y^2)+O(|x,y|^3)\big)\\[2ex]
&\quad+O(\lambda^2)O(|x,y|),
\end{aligned}
\end{array}
\end{equation}
and check $\frac{\partial(\eta_1,\eta_2,\eta_3,\eta_4)}{\partial(\lambda_1,\lambda_2,\lambda_3,\lambda_4)}\big|_{\lambda=0}\neq0$, then we claim that system \eqref{1.5} undergoes cusp type Bogdanov-Takens bifurcation of codimension 4 around $E_{30}^*$ (or $E_{40}^*$ or $E_4^*$ or $\hat{E}_5^*$) when $k>2$. The bifurcation diagram of system \eqref{4.1} is sketched in Fig.\ref{w9} and phase portraits of system \eqref{1.5} are given in Fig.\ref{w2} and Fig.\ref{w3}.

\begin{figure}
\begin{center}
\begin{overpic}[scale=2]{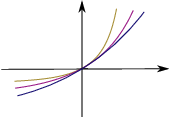}
\end{overpic}
\put(-83,106){$\lambda_2$}
\put(-72,86){$SN$}
\put(-55,86){$H$}
\put(-32,86){$HL$}
\put(-5,49){$\lambda_3$}
\vspace{-4mm}
\end{center}
\centering{\caption{ The bifurcation diagram of system \eqref{1.5} with $\lambda_1=0$, $\lambda_2=0$, $k=3$ and $p=1$. The yellow line is saddle-node bifurcation curve, the pink line is Hopf bifurcation curve and the blue line is homoclinic bifurcation curve.\label{w9}}}
\end{figure}
\begin{theorem}\label{t5}
For $k>2$, system \eqref{1.5} can undergo cusp type Bogdanov-Takens bifurcation of codimension 4 around $E_{30}^*(\tilde{z},\tilde{\eta}\tilde{z})$ (or $E_{40}^*$ or $E_4^*$ or $\hat{E}_5^*$) as $(\Lambda_0,\gamma,\eta,p)$ varies near $(\tilde{\Lambda}_0,\tilde{\gamma},\tilde{\eta},\check{p})$. There exist a series of bifurcation with codimension 2, 3 and 4 originating from $E_{30}^*$ (or $E_{40}^*$ or $E_4^*$ or $\hat{E}_5^*$).

(i) If $\Lambda_0=\frac{(1+z+pz^k)(z+kpz^k)}{pz^{k-1}(k-1)(z+pz^k)}$, $\gamma=\frac{(1+z+pz^k)(z+pz^k)}{pz^k(k-1)}$ and $\eta=\frac{1}{z+pz^k}$, then system \eqref{1.5} can undergo the cusp  bifurcation of codimension 2 near $E_{30}^*$ (or $E_{40}^*$ or $E_4^*$ or $\hat{E}_5^*$) for $p\neq\frac{k-2}{kz^{k-1}}$ or $p\neq\frac{\sqrt{z(k-z+kz)(k-1)}-z}{kz^k}$.

(ii) If $\Lambda_0=\frac{kz(k-z+kz)}{(k-1)\big(\sqrt{(k-1)(k-z+kz)z}-z\big)}$, $\gamma=\frac{\big(kz-z+\sqrt{(k-1)(k-z+kz)z}\big)\big(k-z+kz+\sqrt{(k-1)(k-z+kz)z}\big)}{(k-1)k\big(\sqrt{(k-1)(k-z+kz)z}-z\big)}$, $\eta=\frac{k}{\sqrt{(k-1)(k-z+kz)z}-z+kz}$ and $p=\frac{\sqrt{z(k-z+kz)(k-1)}-z}{kz^k}$, then system \eqref{1.5} can undergo the cusp  bifurcation of codimension 3 near $E_{30}^*$ (or $E_{40}^*$ or $E_4^*$ or $\hat{E}_5^*$) for $z\neq\tilde{z}$.

(iii) If $\Lambda_0=\frac{kz(k-z+kz)}{(k-1)\big(\sqrt{(k-1)(k-z+kz)z}-z\big)}$, $\gamma=\frac{\big(kz-z+\sqrt{(k-1)(k-z+kz)z}\big)\big(k-z+kz+\sqrt{(k-1)(k-z+kz)z}\big)}{(k-1)k\big(\sqrt{(k-1)(k-z+kz)z}-z\big)}$, $\eta=\frac{k}{\sqrt{(k-1)(k-z+kz)z}-z+kz}$ and $p=\frac{\sqrt{z(k-z+kz)(k-1)}-z}{kz^k}$, then system \eqref{1.5} can undergo the cusp  bifurcation of codimension 4 near $E_{30}^*$ (or $E_{40}^*$ or $E_4^*$ or $\hat{E}_5^*$) for $z=\tilde{z}$ and $2<k<5.5745$.
\end{theorem}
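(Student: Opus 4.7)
The plan is to convert the four-parameter family \eqref{4.1} into the universal unfolding \eqref{4.2} of a degenerate cusp by extending, with parameters, the coordinate changes already developed in Theorem \ref{t2}, Lemma \ref{l5}, and Lemma \ref{l6}, and then verify a $4\times 4$ transversality determinant. Starting from $(\Lambda_0,\gamma,\eta,p)=(\tilde{\Lambda}_0,\tilde{\gamma},\tilde{\eta},\check{p})$ with $z=\tilde{z}$ and $2<k<5.5745$, Lemma \ref{l6} places the equilibrium in the normal form $\dot{X}=Y$, $\dot{Y}=X^{2}+\mathcal{F}X^{4}Y+O(|X,Y|^{5})$ with $\mathcal{F}<0$, so $E_{30}^{*}$ (or $E_{40}^{*}$, $E_{4}^{*}$, $\hat{E}_{5}^{*}$) is a cusp of codimension $4$ in the unperturbed dynamics.

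Next, for $\lambda=(\lambda_{1},\lambda_{2},\lambda_{3},\lambda_{4})$ near $0$, I would translate the (now $\lambda$-dependent) positive equilibrium to the origin, apply the linear change putting the $O(\lambda^{0})$ part of the Jacobian into a nilpotent Jordan block (this is smooth in $\lambda$ because the $\lambda=0$ eigenvalues are repeated but the algebraic transformation is a polynomial identity), then replay the chain of near-identity polynomial substitutions and time rescalings from the proofs of Lemma \ref{l5} and Lemma \ref{l6}. Every coefficient appearing along the way is a smooth function of $\lambda$, each reducing to its unperturbed value at $\lambda=0$. Collecting terms up to order $5$ in $(x,y)$ and order $1$ in $\lambda$, the system is brought to the form \eqref{4.2} with constant part $x^{2}-x^{4}y$ (the $-x^{4}y$ coefficient being normalized via a rescaling of $(x,y,t)$ using $\mathcal{F}<0$, parallel to Step 4 in Lemma \ref{l6}) and with unfolding parameters $\eta_{1}(\lambda),\ldots,\eta_{4}(\lambda)$ satisfying $\eta_{j}(0)=0$, the residual terms all absorbed into $R(x,y,\lambda)$ as in \eqref{4.3}.

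The main obstacle is verifying the transversality condition $\det\bigl[\partial(\eta_{1},\eta_{2},\eta_{3},\eta_{4})/\partial(\lambda_{1},\lambda_{2},\lambda_{3},\lambda_{4})\bigr]_{\lambda=0}\neq 0$. Conceptually the four controls act on qualitatively different monomials: $\lambda_{2}$ (perturbing $\Lambda_{0}$) destroys $H(z)=0$ and thus feeds the constant $\eta_{1}$; $\lambda_{3}$ (perturbing $\gamma$) adjusts the trace of the Jacobian and therefore the linear damping $\eta_{2}$; $\lambda_{4}$ (perturbing $\eta$) perturbs $H'(z)$ together with the eigenvector direction, yielding the $xy$-coefficient $\eta_{3}$; $\lambda_{1}$ (perturbing $p$) directly breaks the $\mathcal{G}=0$ degeneracy and so enters $\eta_{4}$. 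Establishing this splitting rigorously reduces, after substantial symbolic cancellation at $z=\tilde{z}$, to checking that a triangular-dominant $4\times 4$ matrix is nonsingular; the algebra is unavoidable and I would execute it with computer algebra, as the lower-order partials $\partial\eta_{i}/\partial\lambda_{j}$ for $j>i$ are nonzero but can be eliminated by row reduction against the dominant diagonal entries produced by the dedicated controls above.

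Once the Jacobian is nonzero, \eqref{4.1} is a universal unfolding of the codimension-$4$ cusp, giving the bifurcation diagram in Fig.\ref{w9} and completing the main assertion. Items (i) and (ii) then drop out as restrictions of the full unfolding to suitable submanifolds of the parameter space: for (i) the hypotheses $\Lambda_{0}=\check{\Lambda}_{0}$, $\gamma=\check{\gamma}$, $\eta=\hat{\eta}$ with $p$ avoiding both $\hat{p}$ and $\check{p}$ give $b_{20}b_{11}\neq 0$, so by Theorem \ref{t2} the singularity is a cusp of codimension $2$, and the three remaining controls $(\lambda_{2},\lambda_{3},\lambda_{4})$ provide its full unfolding; for (ii) imposing additionally $p=\check{p}$ with $z\neq\tilde{z}$ forces $\mathcal{G}\neq 0$ (Lemma \ref{l6}), producing a cusp of codimension $3$ whose unfolding is obtained by suppressing $\lambda_{1}$ in the $4$-parameter family.
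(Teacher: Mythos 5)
Your proposal follows essentially the same route as the paper: the authors likewise translate the perturbed equilibrium of \eqref{4.1} to the origin, run a chain of near-identity polynomial substitutions and time rescalings (their systems \eqref{4.4}--\eqref{4.13}) to reach the normal form \eqref{4.2} with unfolding coefficients $\bar{\chi}_1,\dots,\bar{\chi}_4$, and then verify the nondegeneracy $\bigl|\partial(\bar{\chi}_1,\bar{\chi}_2,\bar{\chi}_3,\bar{\chi}_4)/\partial(\lambda_1,\lambda_2,\lambda_3,\lambda_4)\bigr|_{\lambda=0}\neq0$ by symbolic computation, exactly as you outline. Your treatment of items (i) and (ii) as restrictions of the full unfolding via Theorem \ref{t2} and Lemma \ref{l6} also matches the paper's intent, so the proposal is correct and essentially identical in approach.
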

\begin{proof}
Firstly, by the transformation $x=X+z$,  $y=Y+\tilde{\eta}z$ and Taylor series theorem, system \eqref{4.1} can be changed into
\begin{equation}
\begin{array}{ll}\label{4.4}
\left\{
\begin{aligned}
\dot{X}&=\bar{a}_{00}+\bar{a}_{10}X+\bar{a}_{01}Y+\bar{a}_{20}X^2+\bar{a}_{11}XY+\bar{a}_{30}X^3+\bar{a}_{21}X^2Y\\[2ex]
&\quad+\bar{a}_{40}X^4+\bar{a}_{31}X^3Y+\bar{a}_{50}X^5+\bar{a}_{41}X^4Y+O(|X,Y|^5),\\[2ex]
\dot{Y}&=\bar{b}_{00}+\bar{b}_{10}X+\bar{b}_{01}Y+O(|X,Y|^5),
\end{aligned}
\right.
\end{array}
\end{equation}
where $\bar{a}_{ij}$ and $\bar{b}_{ij}$ are omitted here for the sake of brevity.

Secondly, setting $u=X$ and $v=\dot{X}$, system \eqref{4.4} can be written as
\begin{equation}
\begin{array}{ll}\label{4.5}
\left\{
\begin{aligned}
\dot{u}&=v,\\[2ex]
\dot{v}&=\bar{c}_{00}+\bar{c}_{10}u+\bar{c}_{01}v+\bar{c}_{20}u^2+\bar{c}_{11}uv+\bar{c}_{02}v^2+\bar{c}_{30}u^3\\[2ex]
&\quad+\bar{c}_{21}u^2v+\bar{c}_{12}uv^2+\bar{c}_{40}u^4+\bar{c}_{31}u^3v+\bar{c}_{22}u^2v^2\\[2ex]
&\quad+\bar{c}_{50}u^5+\bar{c}_{32}u^3v^2+\bar{c}_{41}u^4v+O(|u,v|^5),
\end{aligned}
\right.
\end{array}
\end{equation}
where $\bar{c}_{ij}$ are smooth functions whose long expressions are omitted here.

Thirdly, letting $u=x+\frac{\bar{c}_{02}}{2}x^2$ and $v=y+\bar{c}_{02}xy$, system \eqref{4.5} can be transformed into
\begin{equation}
\begin{array}{ll}\label{4.6}
\left\{
\begin{aligned}
\dot{x}&=y,\\[2ex]
\dot{y}&=\bar{d}_{00}+\bar{d}_{10}x+\bar{d}_{01}y+\bar{d}_{20}x^2+\bar{d}_{11}xy+\bar{d}_{30}x^3+\bar{d}_{12}xy^2+\bar{d}_{40}x^4+\bar{d}_{50}x^5\\[2ex]
&\quad+\bar{d}_{21}x^2y+\bar{d}_{31}x^3y+\bar{d}_{22}x^2y^2+\bar{d}_{32}x^3y^2+\bar{d}_{41}x^4y+O(|x,y|^5),
\end{aligned}
\right.
\end{array}
\end{equation}
where
\begin{equation*}
\begin{array}{ll}
\begin{aligned}
\bar{d}_{00}&=\bar{c}_{00},\quad \bar{d}_{10}=\bar{c}_{10}-\bar{c}_{00}\bar{c}_{02},\quad \bar{d}_{01}=\bar{c}_{01},\quad \bar{d}_{20}=\bar{c}_{20}+\bar{c}_{00}\bar{c}_{02}^2-\frac{\bar{c}_{02}\bar{c}_{10}}{2},\\[2ex]
\bar{d}_{30}&=\bar{c}_{30}-\bar{c}_{00}\bar{c}_{02}^3+\frac{\bar{c}_{02}^2\bar{c}_{10}}{2},\quad \bar{d}_{21}=\bar{c}_{21}+\frac{\bar{c}_{02}\bar{c}_{11}}{2},\quad \bar{d}_{12}=\bar{c}_{12}+2\bar{c}_{02}^2,\quad \bar{d}_{11}=\bar{c}_{11},\\[2ex]
\bar{d}_{40}&=\bar{c}_{40}+\bar{c}_{00}\bar{c}_{02}^4-\frac{\bar{c}_{02}(\bar{c}_{02}^2\bar{c}_{10}-\bar{c}_{30})}{2}+\frac{\bar{c}_{02}^2\bar{c}_{20}}{4},\quad \bar{d}_{22}=\bar{c}_{22}-\bar{c}_{02}^3+\frac{3\bar{c}_{02}\bar{c}_{12}}{2},\\[2ex]
\bar{d}_{50}&=\bar{c}_{50}-\bar{c}_{02}(\bar{c}_{00}\bar{c}_{02}^4-\bar{c}_{40})+\frac{1}{4}\bar{c}_{02}^2(2\bar{c}_{02}^2\bar{c}_{10}-\bar{c}_{02}\bar{c}_{20}+\bar{c}_{30}),\quad \bar{d}_{31}=\bar{c}_{31}+\bar{c}_{02}\bar{c}_{21},\\[2ex] \bar{d}_{41}&=\bar{c}_{41}+\frac{1}{4}\bar{c}_{02}(\bar{c}_{02}\bar{c}_{21}+6\bar{c}_{31}),\quad \bar{d}_{32}=\bar{c}_{32}+2\bar{c}_{02}\bar{c}_{22}+\bar{c}_{02}^4+\frac{\bar{c}_{02}^2\bar{c}_{12}}{2}.
\end{aligned}
\end{array}
\end{equation*}

Fourthly, making $x=x_1+\frac{\bar{d}_{12}}{6}x_1^3$ and $y=y_1+\frac{\bar{d}_{12}}{2}x_1^2y_1$, system \eqref{4.6} can be written as
\begin{equation}
\begin{array}{ll}\label{4.7}
\left\{
\begin{aligned}
\dot{x}_1&=y_1,\\[2ex]
\dot{y}_1&=\bar{e}_{00}+\bar{e}_{10}x_1+\bar{e}_{01}y_1+\bar{e}_{20}x_1^2+\bar{e}_{11}x_1y_1+\bar{e}_{30}x_1^3+\bar{e}_{21}x_1^2y_1+\bar{e}_{31}x_1^3y_1\\[2ex]
&\quad+\bar{e}_{40}x_1^4+\bar{e}_{22}x_1^2y_1^2+\bar{e}_{50}x_1^5+\bar{e}_{32}x_1^3y_1^2+\bar{e}_{41}x_1^4y_1+O(|x_1,y_1|^5),
\end{aligned}
\right.
\end{array}
\end{equation}
where
\begin{equation*}
\begin{array}{ll}
\begin{aligned}
\bar{e}_{00}&=\bar{d}_{00},\quad \bar{e}_{10}=\bar{d}_{10},\quad \bar{e}_{01}=\bar{d}_{01},\quad \bar{e}_{20}=\bar{d}_{20}-\frac{\bar{d}_{00}\bar{d}_{12}}{2},\quad \bar{e}_{11}=\bar{d}_{11},\quad \bar{e}_{22}=\bar{d}_{22},\\[2ex]
\bar{e}_{30}&=\bar{d}_{30}-\frac{\bar{d}_{10}\bar{d}_{12}}{3},\quad \bar{e}_{40}=\bar{d}_{40}-\frac{\bar{d}_{00}\bar{d}_{12}^2}{4}-\frac{\bar{d}_{12}\bar{d}_{20}}{6},\quad \bar{e}_{31}=\bar{d}_{31}+\frac{\bar{d}_{11}\bar{d}_{12}}{6},\\[2ex]
\bar{e}_{50}&=\bar{d}_{50}+\frac{\bar{d}_{10}\bar{d}_{12}^2}{5},\quad \bar{e}_{41}=\bar{d}_{41}+\frac{\bar{d}_{12}\bar{d}_{21}}{3},\quad \bar{e}_{32}=\bar{d}_{32}+\frac{7\bar{d}_{12}^2}{6},\quad \bar{e}_{21}=\bar{d}_{21}.
\end{aligned}
\end{array}
\end{equation*}

Fifthly, letting $x_1=u+\frac{\bar{e}_{22}}{12}u^4$ and $y_1=v+\frac{\bar{e}_{22}}{3}u^3v$, \eqref{4.7} can be changed into
\begin{equation}
\begin{array}{ll}\label{4.8}
\left\{
\begin{aligned}
\dot{u}&=v,\\[2ex]
\dot{v}&=\bar{h}_{00}+\bar{h}_{10}u+\bar{h}_{01}v+\bar{h}_{20}u^2+\bar{h}_{11}uv+\bar{h}_{30}u^3+\bar{h}_{21}u^2y_1+\bar{h}_{31}u^3v\\[2ex]
&\quad+\bar{h}_{40}u^4+\bar{h}_{50}u^5+\bar{h}_{32}u^3v^2+\bar{h}_{41}u^4v+O(|u,v|^5),
\end{aligned}
\right.
\end{array}
\end{equation}
where
\begin{equation*}
\begin{array}{ll}
\begin{aligned}
\bar{h}_{00}&=\bar{e}_{00},\quad \bar{h}_{10}=\bar{e}_{10},\quad \bar{h}_{01}=\bar{e}_{01},\quad \bar{h}_{20}=\bar{e}_{20},\quad \bar{h}_{11}=\bar{e}_{11},\\[2ex]
\bar{h}_{30}&=\bar{e}_{30}-\frac{\bar{e}_{00}\bar{e}_{22}}{3},\quad \bar{h}_{21}=\bar{e}_{21},\quad \bar{h}_{40}=\bar{e}_{40}-\frac{\bar{e}_{10}\bar{e}_{22}}{4},\quad \bar{h}_{31}=\bar{e}_{31},\\[2ex]
\bar{h}_{50}&=\bar{e}_{50}-\frac{\bar{e}_{20}\bar{e}_{22}}{5},\quad \bar{h}_{41}=\bar{e}_{41}+\frac{\bar{e}_{11}\bar{e}_{22}}{12},\quad \bar{h}_{32}=\bar{e}_{32}.
\end{aligned}
\end{array}
\end{equation*}

Sixthly, setting $u=x+\frac{\bar{h}_{32}}{20}x^5$ and $v=y+\frac{\bar{h}_{32}}{4}x^4y$, then system \eqref{4.8} can be transformed into
\begin{equation}
\begin{array}{ll}\label{4.9}
\left\{
\begin{aligned}
\dot{x}&=y,\\[2ex]
\dot{y}&=\bar{k}_{00}+\bar{k}_{10}x+\bar{k}_{01}y+\bar{k}_{20}x^2+\bar{k}_{11}xy+\bar{k}_{30}x^3+\bar{k}_{21}x^2y+\bar{k}_{31}x^3y\\[2ex]
&\quad+\bar{k}_{40}x^4+\bar{k}_{50}x^5+\bar{k}_{41}x^4y+O(|x,y|^5),
\end{aligned}
\right.
\end{array}
\end{equation}
where
\begin{equation*}
\begin{array}{ll}
\begin{aligned}
\bar{k}_{00}&=\bar{h}_{00},\quad \bar{k}_{10}=\bar{h}_{10},\quad \bar{k}_{01}=\bar{h}_{01},\quad \bar{k}_{20}=\bar{h}_{20},\quad \bar{k}_{11}=\bar{h}_{11},\quad \bar{k}_{30}=\bar{h}_{30},\\[2ex]
\bar{k}_{21}&=\bar{h}_{21},\quad \bar{k}_{40}=\bar{h}_{40}-\frac{\bar{h}_{00}\bar{h}_{32}}{4},\quad \bar{k}_{31}=\bar{h}_{31},\quad \bar{k}_{50}=\bar{h}_{50}-\frac{\bar{h}_{10}\bar{h}_{32}}{5},\quad \bar{k}_{41}=\bar{h}_{41}.
\end{aligned}
\end{array}
\end{equation*}

Seventhly, making
\begin{equation*}
\begin{array}{ll}
\begin{aligned}
x&=X-\frac{\bar{k}_{30}}{4\bar{k}_{20}}X^2+\frac{15\bar{k}_{30}^2-16\bar{k}_{20}\bar{k}_{40}}{80\bar{k}_{20}^2}X^3
+\frac{336\bar{k}_{20}\bar{k}_{30}\bar{k}_{40}-175\bar{k}_{30}^3-160\bar{k}_{20}^2\bar{k}_{50}}{960\bar{k}_{20}^3}X^4,\\[2ex]
y&=Y,\\[2ex]
t&=(1-\frac{\bar{k}_{30}}{2\bar{k}_{20}}X+\frac{45\bar{k}_{30}^2-48\bar{k}_{40}}{80\bar{k}_{20}^2}X^2
+\frac{336\bar{k}_{20}\bar{k}_{30}\bar{k}_{40}-175\bar{k}_{30}^3-160\bar{k}_{20}^2\bar{k}_{50}}{240\bar{k}_{20}^3}X^3)\tau,
\end{aligned}
\end{array}
\end{equation*}
system \eqref{4.9} can be written as (still denote $\tau$ by $t$)
\begin{equation}
\begin{array}{ll}\label{4.10}
\left\{
\begin{aligned}
\dot{X}&=Y,\\[2ex]
\dot{X}&=\bar{l}_{00}+\bar{l}_{10}X+\bar{l}_{01}Y+\bar{l}_{20}X^2+\bar{l}_{11}XY+\bar{l}_{30}X^3+\bar{l}_{21}X^2Y+\bar{l}_{40}X^4\\[2ex]
&\quad+\bar{l}_{31}X^3Y+\bar{l}_{50}X^5+\bar{l}_{41}X^4Y+O(|X,Y|^5),
\end{aligned}
\right.
\end{array}
\end{equation}
where
\begin{equation*}
\begin{array}{ll}
\begin{aligned}
\bar{l}_{20}&=\bar{k}_{20}+\frac{9\bar{k}_{00}\bar{k}_{30}^2}{16\bar{k}_{20}^2}-\frac{3\bar{k}_{00}\bar{k}_{40}}{5\bar{k}_{20}}-\frac{3\bar{k}_{10}\bar{k}_{30}}{4\bar{k}_{20}},\quad \bar{l}_{40}=-\frac{\bar{k}_{10}(55\bar{k}_{30}^3-96\bar{k}_{20}\bar{k}_{30}\bar{k}_{40}+40\bar{k}_{20}^2\bar{k}_{50})}{48\bar{k}_{20}^3},\\[2ex]
\bar{l}_{21}&=\bar{k}_{21}-\frac{3\big(5\bar{k}_{30}(4\bar{k}_{11}\bar{k}_{20}-3\bar{k}_{01}\bar{k}_{30})+16\bar{k}_{01}\bar{k}_{20}\bar{k}_{40}\big)}{80\bar{k}_{20}^2},\quad \bar{l}_{01}=\bar{k}_{01},\quad \bar{l}_{11}=\bar{k}_{11}-\frac{\bar{k}_{01}\bar{k}_{30}}{2\bar{k}_{20}},\\[2ex]
\bar{l}_{30}&=\frac{6\bar{k}_{10}\bar{k}_{20}(35\bar{k}_{30}^2-32\bar{k}_{20}\bar{k}_{40})-\bar{k}_{00}(175\bar{k}_{30}^3-336\bar{k}_{20}\bar{k}_{30}\bar{k}_{40}+160\bar{k}_{20}^2\bar{k}_{50})}{240\bar{k}_{20}^3},\quad \bar{l}_{00}=\bar{k}_{00},\\[2ex]
\bar{l}_{31}&=\frac{1}{240\bar{k}_{20}^3}\big(16\bar{k}_{20}^2(15\bar{k}_{20}\bar{k}_{31}-12\bar{k}_{11}\bar{k}_{40}-10\bar{k}_{01}\bar{k}_{50})-175\bar{k}_{01}\bar{k}_{30}^3\\[2ex]
&\quad-6\bar{k}_{20}\bar{k}_{30}(40\bar{k}_{20}\bar{k}_{21}-35\bar{k}_{11}\bar{k}_{30}-56\bar{k}_{01}\bar{k}_{40})\big),\quad l_{10}=\bar{k}_{10}-\frac{\bar{k}_{00}\bar{k}_{30}}{2\bar{k}_{20}},\\[2ex]
\bar{l}_{50}&=\frac{\bar{k}_{10}\big(2425\bar{k}_{30}^4+768\bar{k}_{20}^2\bar{k}_{40}^2+1600\bar{k}_{20}\bar{k}_{30}(\bar{k}_{20}\bar{k}_{50}-3\bar{k}_{30}\bar{k}_{40})\big)}{6400\bar{k}_{20}^4},\\[2ex]
\bar{l}_{41}&=\frac{1}{48\bar{k}_{20}^3}\big(8\bar{k}_{20}^2(6\bar{k}_{20}\bar{k}_{41}-5\bar{k}_{11}\bar{k}_{50})-48\bar{k}_{20}\bar{k}_{40}(\bar{k}_{20}\bar{k}_{21}-2\bar{k}_{11}\bar{k}_{30})\\[2ex]
&\quad-5\bar{k}_{30}(11\bar{k}_{11}\bar{k}_{30}^2+12\bar{k}_{20}^2\bar{k}_{31}-12\bar{k}_{20}\bar{k}_{21}\bar{k}_{30})\big).
\end{aligned}
\end{array}
\end{equation*}

Eighthly, letting $X=x_2$, $Y=y_2+\frac{\bar{l}_{21}}{3\bar{l}_{20}}y_2^2+\frac{\bar{l}_{21}^2}{36\bar{l}_{20}^2}y_2^3$ and $t=(1+\frac{\bar{l}_{21}}{3\bar{l}_{20}}y_2+\frac{\bar{l}_{21}^2}{36\bar{l}_{20}^2}y_2^2)\tau$, system \eqref{4.10} can be transformed into (still denote $\tau$ by $t$)
\begin{equation}
\begin{array}{ll}\label{4.11}
\left\{
\begin{aligned}
\dot{x_2}&=y_2,\\[2ex]
\dot{y_2}&=\bar{m}_{00}+\bar{m}_{10}x_2+\bar{m}_{01}y_2+\bar{m}_{20}x_2^2+\bar{m}_{11}x_2y_2\\[2ex]
&\quad+\bar{m}_{31}x_2^3y_2+\bar{m}_{41}x_2^4y_2+O(|x_2,y_2|^5),
\end{aligned}
\right.
\end{array}
\end{equation}
where
\begin{equation*}
\begin{array}{ll}
\begin{aligned}
\bar{m}_{00}&=l_{00},\quad \bar{m}_{10}=\bar{l}_{10},\quad \bar{m}_{01}=\bar{l}_{01}-\frac{\bar{l}_{00}\bar{l}_{21}}{\bar{l}_{20}},\quad \bar{m}_{20}=\bar{l}_{20},\\[2ex]
\bar{m}_{11}&=\bar{l}_{11}-\frac{\bar{l}_{10}\bar{l}_{21}}{\bar{l}_{20}},\quad \bar{m}_{31}=\bar{l}_{31}-\frac{\bar{l}_{21}\bar{l}_{30}}{\bar{l}_{20}},\quad \bar{m}_{41}=\bar{l}_{41}-\frac{\bar{l}_{21}\bar{l}_{40}}{\bar{l}_{20}}.
\end{aligned}
\end{array}
\end{equation*}

Ninthly, setting $x_2=\bar{m}_{20}^{\frac{1}{7}}\bar{m}_{41}^{-\frac{2}{7}}u$, $y_2=-\bar{m}_{20}^{\frac{5}{7}}\bar{m}_{41}^{-\frac{3}{7}}v$ and $t=-\bar{m}_{20}^{-\frac{4}{7}}\bar{m}_{41}^{\frac{1}{7}}\tau$, system \eqref{4.11} can be changed into
\begin{equation}
\begin{array}{ll}\label{4.12}
\left\{
\begin{aligned}
\dot{u}&=v,\\[2ex]
\dot{v}&=\bar{n}_{00}+\bar{n}_{10}u+\bar{n}_{01}v+\bar{n}_{11}uv+\bar{n}_{31}u^3v+u^2-u^4v+O(|u,v|^5),
\end{aligned}
\right.
\end{array}
\end{equation}
where
\begin{equation*}
\begin{array}{ll}
\begin{aligned}
\bar{n}_{00}&=\frac{\bar{m}_{00}\bar{m}_{41}^{\frac{4}{7}}}{\bar{m}_{20}^{\frac{9}{7}}},\quad \bar{n}_{10}=\frac{\bar{m}_{10}\bar{m}_{41}^{\frac{2}{7}}}{\bar{m}_{20}^{\frac{8}{7}}},
\quad \bar{n}_{01}=-\frac{\bar{m}_{01}\bar{m}_{41}^{\frac{1}{7}}}{\bar{m}_{20}^{\frac{4}{7}}},\quad\bar{n}_{11}=-\frac{\bar{m}_{11}}{\bar{m}_{20}^{\frac{3}{7}}\bar{m}_{41}^{\frac{1}{7}}},\quad \bar{n}_{31}=-\frac{\bar{m}_{31}}{\bar{m}_{20}^{\frac{1}{7}}\bar{m}_{41}^{\frac{5}{7}}}.
\end{aligned}
\end{array}
\end{equation*}

Finally, making $u=x-\frac{\bar{n}_{10}}{2}$ and $v=y$, system \eqref{4.12} can be written as
\begin{equation}
\begin{array}{ll}\label{4.13}
\left\{
\begin{aligned}
\dot{x}&=y,\\[2ex]
\dot{y}&=\bar{\chi}_1+\bar{\chi}_2y+\bar{\chi}_3xy+\bar{\chi}_4x^3y+x^2-x^4y+O(|x,y|^5),
\end{aligned}
\right.
\end{array}
\end{equation}
where
\begin{equation*}
\begin{array}{ll}
\begin{aligned}
\bar{\chi}_1&=\bar{n}_{00}-\frac{\bar{n}_{10}^2}{4},\quad \bar{\chi}_2=\bar{n}_{01}-\frac{\bar{n}_{10}^4}{16}-\frac{\bar{n}_{10}^3\bar{n}_{31}}{8}-\frac{\bar{n}_{10}\bar{n}_{11}}{2},\\[2ex]
\bar{\chi}_3&=\bar{n}_{11}+\frac{\bar{n}_{10}^3}{2}+\frac{3\bar{n}_{10}^2\bar{n}_{31}}{4},\quad \bar{\chi}_4=2\bar{n}_{10}+\bar{n}_{31}.
\end{aligned}
\end{array}
\end{equation*}

Through calculate, we have
\begin{equation}
\begin{array}{ll}\label{4.14}
\begin{aligned}
\bigg|\frac{\partial(\bar{\chi}_1,\bar{\chi}_2,\bar{\chi}_3,\bar{\chi}_4)}{\partial(\lambda_1,\lambda_2,\lambda_3,\lambda_4)}\bigg|_{\lambda=0}\neq0.
\end{aligned}
\end{array}
\end{equation}
Thus, we claim that system \eqref{1.5} can undergo cusp type Bogdanov-Takens bifurcation of codimension 4.
\end{proof}
\begin{figure}
\begin{center}
\begin{overpic}[scale=0.50]{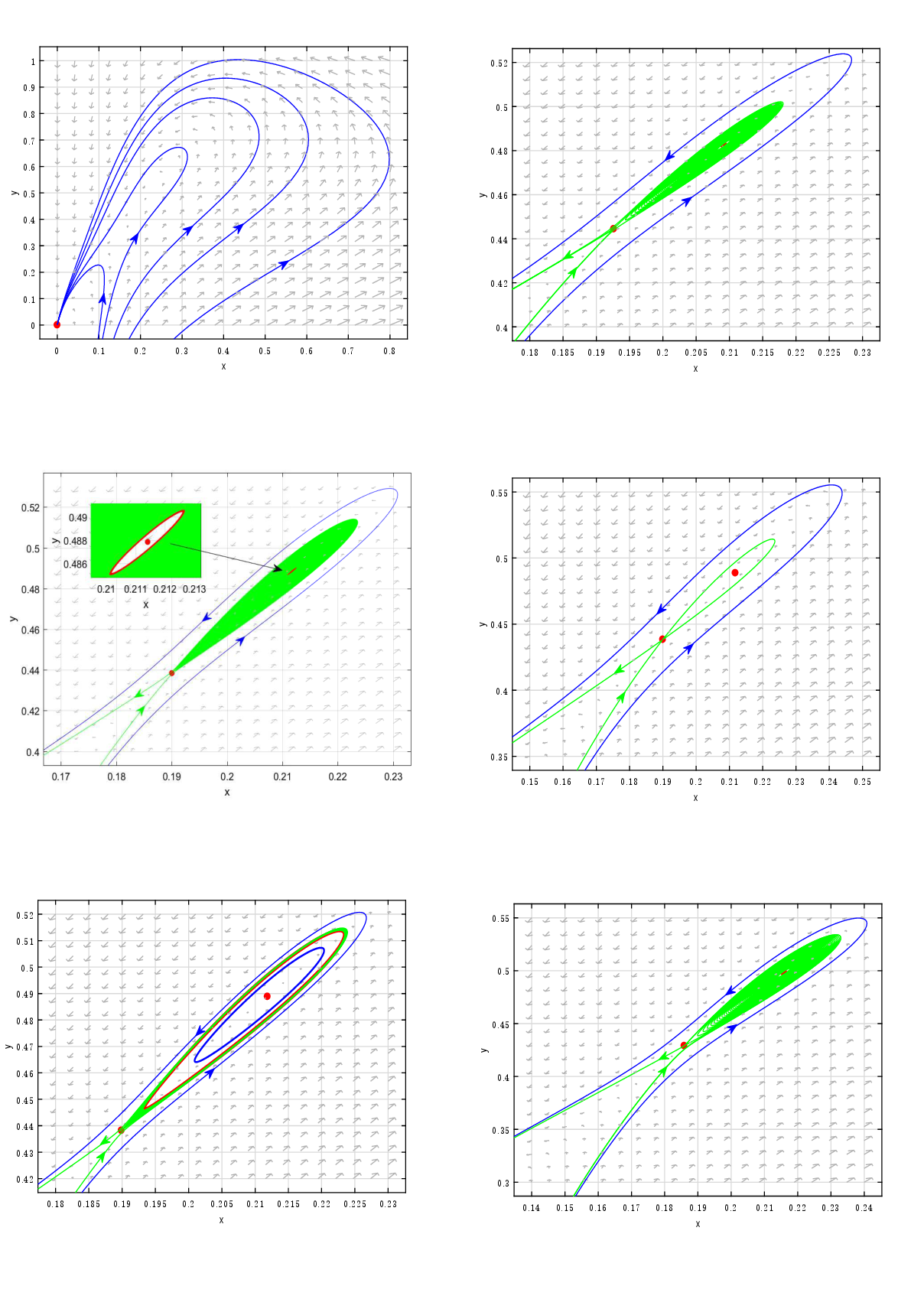}
\end{overpic}
\put(-228,278){$(a)$}
\put(-78,278){$(b)$}
\put(-228,140){$(c)$}
\put(-78,140){$(d)$}
\put(-228,6){$(e)$}
\put(-78,6){$(f)$}
\vspace{-6mm}
\end{center}
\centering{\caption{ The phase portraits of system system \eqref{1.5} with $k=2$, $\Lambda_0=1.8993$, $p=5.7966$ and $\eta=2.3072$. (a) $\gamma=2.6731$; (b) $\gamma=2.6717$; (c) $\gamma=2.6712$; (d) $\gamma=2.6708$; (e) $\gamma=2.6706$; (f) $\gamma=6687$. When $\gamma$ decreases, system \eqref{1.5} undergoes successively saddle-node bifurcation, Hopf bifurcation, homoclinic bifurcation and double limit cycle bifurcation.\label{w2}}}
\end{figure}

\begin{figure}
\begin{center}
\begin{overpic}[scale=0.50]{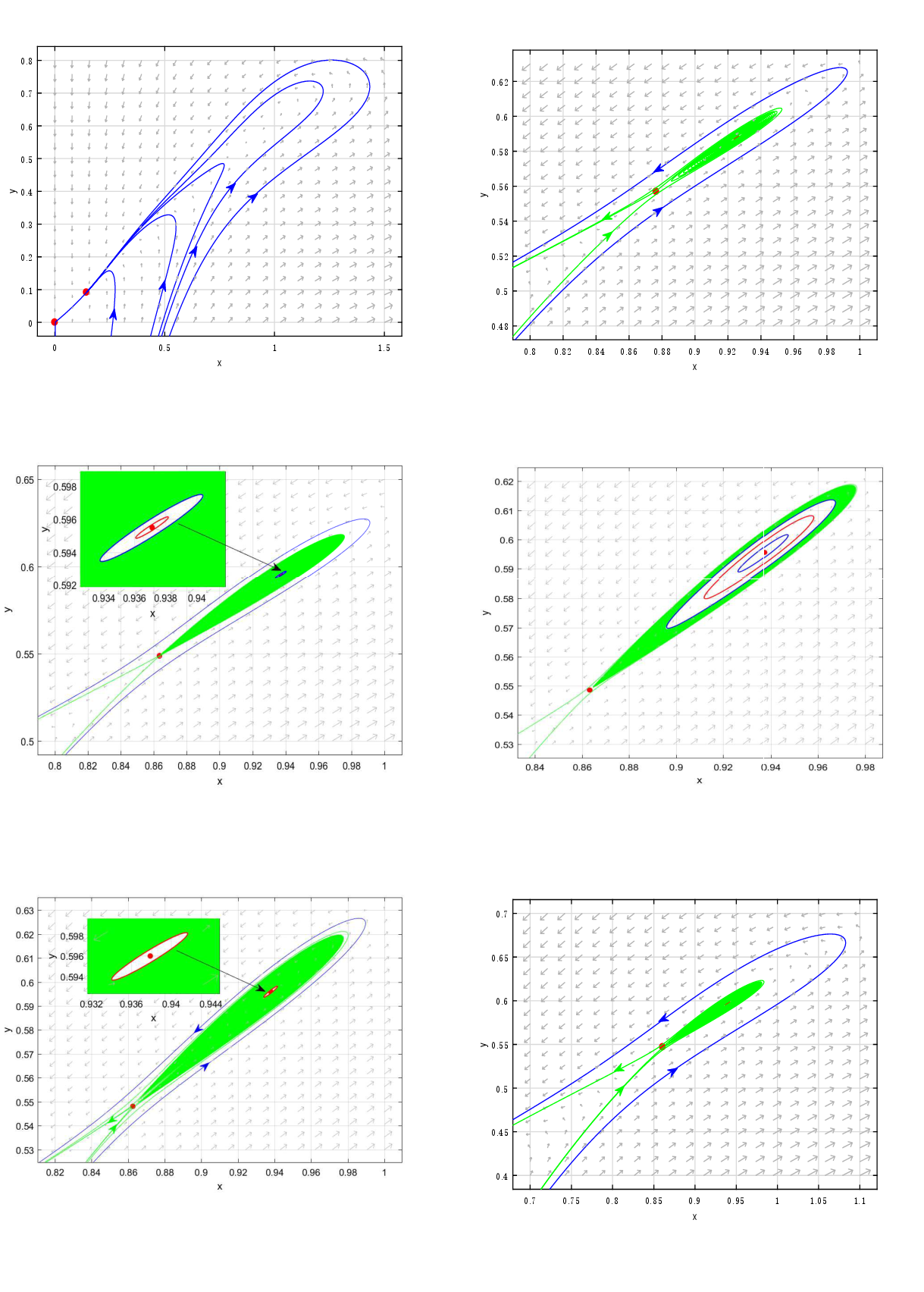}
\end{overpic}
\put(-228,278){$(a)$}
\put(-78,278){$(b)$}
\put(-228,140){$(c)$}
\put(-78,140){$(d)$}
\put(-228,6){$(e)$}
\put(-78,6){$(f)$}
\vspace{-6mm}
\end{center}
\centering{\caption{ The phase portraits of system system \eqref{1.5} with $k=3$, $\Lambda_0=3.1832$, $p=0.9331$ and $\eta=0.6355$. (a) $\gamma=3.0047$; (b) $\gamma=3.004$; (c) $\gamma=3.00312$; (d) $\gamma=3.00311$; (e) $\gamma=3.00305$; (f) $\gamma=3.0029$. When $\gamma$ decreases, system \eqref{1.5} undergoes successively saddle-node bifurcation, double limit cycle bifurcation, three limit cycle bifurcation and Hopf bifurcation.\label{w3}}}
\end{figure}
Similar to Theorem \ref{t5}, we have the following results.
\begin{theorem}
For $1<k\leq2$, system \eqref{1.5} can undergo cusp type Bogdanov-Takens bifurcation of codimension 3 around $E_2^*$ (or $\hat{E}_0^*$) as $(\Lambda_0,\gamma,\eta,p)$ varies near $(\tilde{\Lambda}_0,\tilde{\gamma},\tilde{\eta},\check{p})$, see Fig.\ref{w2}. There exist a series of bifurcation with codimension 2, and 3 originating from $E_2^*$ (or $\hat{E}_0^*$).

(i) If $\Lambda_0=\frac{(1+z+pz^k)(z+kpz^k)}{pz^{k-1}(k-1)(z+pz^k)}$, $\gamma=\frac{(1+z+pz^k)(z+pz^k)}{pz^k(k-1)}$ and $\eta=\frac{1}{z+pz^k}$, then system \eqref{1.5} can undergo the cusp  bifurcation of codimension 2 near $E_2^*$ (or $\hat{E}_0^*$) for $1<k\leq2$.

(ii) If $\Lambda_0=\frac{kz(k-z+kz)}{(k-1)\big(\sqrt{(k-1)(k-z+kz)z}-z\big)}$, $\gamma=\frac{\big(kz-z+\sqrt{(k-1)(k-z+kz)z}\big)\big(k-z+kz+\sqrt{(k-1)(k-z+kz)z}\big)}{(k-1)k\big(\sqrt{(k-1)(k-z+kz)z}-z\big)}$, $\eta=\frac{k}{\sqrt{(k-1)(k-z+kz)z}-z+kz}$ and $p=\frac{\sqrt{z(k-z+kz)(k-1)}-z}{kz^k}$, then system \eqref{1.5} can undergo the cusp  bifurcation of codimension 3 near $E_2^*$ (or $\hat{E}_0^*$) for $\frac{1+2z}{1+z}<k\leq2$.
\end{theorem}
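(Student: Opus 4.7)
The plan is to mirror the reduction pipeline of Theorem~\ref{t5}, but adapted to codimension~$3$: by Theorem~\ref{t3}(i), for $\frac{1+2z}{1+z}<k\le 2$ the equilibrium $E_2^*$ (or $\hat{E}_0^*$) is a nilpotent cusp of codimension~$3$ at the critical point $(\tilde{\Lambda}_0,\tilde{\gamma},\tilde{\eta},\check{p})$ because $\mathcal{G}\neq 0$ in Lemma~\ref{l6}. The organising centre is therefore a Bogdanov--Takens singularity of codimension~$3$, and the target is the Dumortier--Roussarie--Sotomayor normal form
\begin{equation*}
\left\{
\begin{aligned}
\dot{x}&=y,\\
\dot{y}&=\chi_1+\chi_2 y+\chi_3 x y + x^2 - x^3 y + R(x,y,\lambda),
\end{aligned}
\right.
\end{equation*}
with remainder $R$ of the same type as \eqref{4.3}, truncated at one order lower. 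I would take $(\lambda_1,\lambda_2,\lambda_3)$ as a perturbation of $(\Lambda_0,\gamma,\eta)$ about $(\tilde{\Lambda}_0,\tilde{\gamma},\tilde{\eta})$ with $p=\check{p}$ held fixed, since only three unfolding directions are needed, and consider the resulting family in analogy with \eqref{4.1}.

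The execution would be a truncated replay of Steps~1--8 in the proof of Theorem~\ref{t5}. First, translate $x=X+z,\ y=Y+\tilde{\eta}z$ and Taylor-expand the vector field through order four. Second, set $u=X,\ v=\dot{X}$ to reach the Li\'enard-type form $\dot{u}=v$. Third, apply the near-identity substitutions $u=x+\tfrac{\bar{c}_{02}}{2}x^2,\ v=y+\bar{c}_{02}xy$, followed by $x=x_1+\tfrac{\bar{d}_{12}}{6}x_1^3,\ y=y_1+\tfrac{\bar{d}_{12}}{2}x_1^2 y_1$, to eliminate the $v^2$ and $xv^2$ obstructions (the analogues of \eqref{4.6}--\eqref{4.7}). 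Fourth, a polynomial change of $x$-coordinate combined with a time rescaling of the form $t=(1-\tfrac{\bar{k}_{30}}{2\bar{k}_{20}}X+\cdots)\tau$ kills the $x^3$ coefficient and simultaneously simplifies the $x^2$ coefficient. Fifth, a diagonal rescaling $(x_2,y_2,t)=(\alpha u,\beta v,\gamma \tau)$ (with exponents chosen analogously to the $\tfrac{1}{7}$--powers in Theorem~\ref{t5}, but now determined by the $x^3y$ term instead of $x^4y$) normalises the $x^2$ and $x^3y$ coefficients to $+1$ and $-1$. Finally, a translation $x\mapsto x-\bar{n}_{10}/2$ absorbs the residual linear term into $\chi_1$. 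The three functions $\chi_1,\chi_2,\chi_3$ are smooth in $\lambda$ and vanish at $\lambda=0$.

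The crux, and the main obstacle, is the versality check
\begin{equation*}
\det\left.\frac{\partial(\chi_1,\chi_2,\chi_3)}{\partial(\lambda_1,\lambda_2,\lambda_3)}\right|_{\lambda=0}\neq 0.
\end{equation*}
Algebraically this determinant is unwieldy because each $\chi_i$ is a composition of several polynomial substitutions acting on Taylor coefficients that are themselves rational functions of $(\Lambda_0,\gamma,\eta,p,z,k)$. My approach is to retain only the linear-in-$\lambda$ parts of $\bar{a}_{00},\bar{a}_{10},\bar{a}_{01}$ (higher-order contributions do not alter $\partial\chi_i/\partial\lambda_j$ at $\lambda=0$), then exploit the critical-point relations $H(z)=H'(z)=H''(z)=0$ together with $b_{11}=0$, i.e.\ $p=\check{p}$, to reduce the $3\times 3$ Jacobian to a rational function of $(z,k)$ whose nonvanishing is guaranteed by $\frac{1+2z}{1+z}<k\le 2$. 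Part~(i) is easier: at the codim-$2$ cusp point $p$ is not constrained to equal $\check{p}$, so only the classical $2\times 2$ Bogdanov--Takens versality calculation, combined with the cusp condition $b_{20}b_{11}\neq 0$ of Theorem~\ref{t2}, is required. Once versality is confirmed, the bifurcation diagram in Fig.~\ref{w2} -- the saddle-node, Hopf, homoclinic and double-limit-cycle curves -- follows from the well-known analysis of the codim-$3$ BT unfolding.
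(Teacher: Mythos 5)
Your proposal takes essentially the same approach as the paper: the paper omits the proof of this theorem entirely, stating only that it is ``Similar to Theorem~\ref{t5}'', and your truncated replay of that reduction pipeline (ending in the codimension-3 normal form with the $x^3y$ term and the $3\times 3$ versality determinant) is precisely the intended argument. Your choice to fix $p=\check{p}$ and unfold only in $(\Lambda_0,\gamma,\eta)$ is a reasonable and slightly more explicit instantiation of the paper's statement that $(\Lambda_0,\gamma,\eta,p)$ varies near the critical point.
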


\section{Hopf bifurcation}

In this section, we consider Hopf bifurcation of system \eqref{1.5} around positive equilibrium $E(x,y)$, which satisfied $Tr(J(E))=0$ and $Det(J(E))>0$ when $k>1$. To simplify the notation, we let $E=(z,\eta z)$. From \eqref{2.2} and $Tr(J(E))=0$, $\Lambda_0$ and $\gamma$ can be expresses by $p$, $\eta$, $z$ and $k$ as follows
\begin{equation}\label{5.1}
\begin{array}{ll}
\begin{aligned}
\Lambda_0&=\breve{\Lambda}_0:=\frac{1+z+pz^k(k-\eta+k\eta)}{pz^{k-1}(k-1)},\\[2ex]
\gamma&=\breve{\gamma}:=\frac{z(z+1)+pz^k(pz^k+1)+2pz^{k+1}}{pz^k(k-1)}.
\end{aligned}
\end{array}
\end{equation}

It is not difficult to show that $\breve{\Lambda}_0>0$, $\breve{\gamma}>0$ and $Det(J(E))>0$ if and only if $(k,z,p,\eta)\in\Omega^*$, where
\begin{equation}
\begin{array}{ll}\label{5.2}
\begin{aligned}
\Omega^*:&=\bigg\{(k,z,p,\eta)\in R_+^4\bigg|\frac{1}{z+pz^k}<\eta<\frac{(z+pz^k)(1+z+pz^k)}{pz^k(k-1)}, p>0, z>0, k>1\bigg\}.
\end{aligned}
\end{array}
\end{equation}

Now, we compute the first four focal values of system \eqref{1.5}. Make the following linear transformations successively
\begin{equation*}
\begin{array}{ll}
\begin{aligned}
x&=X+z,\quad y=Y+\eta z;\\[2ex]
X&=\frac{1}{\eta}u+\frac{\sqrt{p\eta z^k+\eta z-1}}{\eta}v,\quad Y=u,\quad t=\frac{\tau}{\sqrt{\mathcal{D}}},
\end{aligned}
\end{array}
\end{equation*}
where $\mathcal{D}=Det(J(E))=p\eta z^k+\eta z-1$ when $k>1$, then system \eqref{1.5} can be transformed into (still $\tau$ by $t$)
\begin{equation}
\begin{array}{ll}\label{5.3}
\left\{
\begin{aligned}
\dot{u}&=v,\\[2ex]
\dot{v}&=-u+\sum\limits_{2\leq i+j\leq9}\omega_{ij}u^{i}v^{j}+O(|u,v|^{10}),
\end{aligned}
\right.
\end{array}
\end{equation}
where $\omega_{ij}$ are omitted for brevity.

According to formal series method \cite{ZTW}, we can get the first four focal values as follows
\begin{equation*}
\begin{array}{ll}
\begin{aligned}
\vartheta_1&=\frac{f_1}{8\eta z^2(p\eta z^k+\eta z-1)^{3/2}},\quad\vartheta_2=\frac{f_2}{1152\eta^3z^4(p\eta z^k+\eta z-1)^{7/2}},\\[2ex]
\vartheta_3&=\frac{f_3}{442368\eta^5z^6(p\eta z^k+\eta z-1)^{11/2}},\quad\vartheta_4=\frac{f_4}{1061683200\eta^7z^8(p\eta z^k+\eta z-1)^{15/2}}.
\end{aligned}
\end{array}
\end{equation*}
where
\begin{equation*}
\begin{array}{ll}
\begin{aligned}
f_1&=(4-k+3\eta-3k\eta)kp^2z^{2k+1}+(2-k+\eta-k\eta)(k+2)pz^{k+2}+(3-k+\eta-k\eta)kpz^{k+1}\\[2ex]
&\quad+(k-k\eta+\eta)kp^3z^{3k}+(2k-1)kp^2z^{2k}+(k-2-\eta+k\eta)(k-2)z^3\\[2ex]
&\quad-(2-k+\eta-k\eta)(k-2)z^2,
\end{aligned}
\end{array}
\end{equation*}
and $f_2$, $f_3$, $f_4$ are omitted here. Let the algebraic variety $V(\xi_1,\xi_2,\cdots,\xi_n)$ denote the set of common zeros of $\xi_i$  ($i=1, 2,\cdots, n$), $\mathrm{Res}(f,g,x)$ denotes the Sylvester resultant of $f$ and $g$ with respect to $x$. Through calculate, we have
\begin{equation}\label{5.4}
\begin{array}{ll}
\begin{aligned}
r_{12}:&=\mathrm{Res}(f_1,f_2,p)=128k^4\eta^3z^{6+21k}(1+\eta)^3(1+z)(2-k)(k-1)^7R_0R_1R_2^2R_3^2g_1,\\[2ex]
r_{13}:&=\mathrm{Res}(f_1,f_3,p)=4096k^5\eta^3z^{6+33k}(1+\eta)^4(1+z)(2-k)(k-1)^7R_0R_1R_2^3R_3^2g_2,\\[2ex]
r_{14}:&=\mathrm{Res}(f_1,f_4,p)=16384k^6\eta^3z^{6+45k}(1+\eta)^5(1+z)(2-k)(k-1)^7R_0R_1R_2^4R_3^2g_3,\\[2ex]
r_{23}:&=\mathrm{Res}(g_1,g_2,\eta)=C_1k^{34}z^{13}(2-k)^3(k-1)^{15}(k+1)^2(2k-1)^4l_1l_2l_3l_4S_1,\\[2ex]
r_{24}:&=\mathrm{Res}(g_1,g_3,\eta)=C_2k^{55}z^{21}(2-k)^3(k-1)^{24}(k+1)^2(2k-1)^4l_1l_2l_3l_4S_2,\\[2ex]
\end{aligned}
\end{array}
\end{equation}
where
\begin{equation*}
\begin{array}{ll}
\begin{aligned}
R_0&=(k\eta-\eta-k),\quad R_1=k\eta+k-\eta-2,\quad R_2=2k\eta z-2\eta z-k,\\[2ex]
R_3&=k\eta^2z-\eta^2z+2k\eta z-2\eta z-k,\quad l_1=10k^2z-30kz+20z+2k^2-3k,\\[2ex]
l_2&=12(k-1)^2(k-1)z^24(k-1)(k-2)(2k-1)z-k(2k-1)^2,\\[2ex]
l_3&=12(k-1)^4z^4+8(k-1)^3(5k+2)z^3+(k-1)^2(2k-1)(23k+18)z^2\\[2ex]
&\quad+2kz(k-1)(2k-1)(5k+2)+k^2(k+1)(2k-1),\\[2ex]
l_4&=300(k-2)^2(k-1)^3z^4+4(k-1)^2(k-2)(127k^2-181k-158)z^3\\[2ex]
&\quad+2(k-1)(k-1)(2k-1)(66k^2+111k-535)z^2+(2k-1)^2(51k^3-60k^2-407k+648)z\\[2ex]
&\quad+(k-2)(k+1)(2k-1)^2(37k-81).
\end{aligned}
\end{array}
\end{equation*}
For brevity, we omit expressions of $g_i$ ($i=1, 2, 3$) and $S_i$ ($i=1, 2, 3, 4$). Based on the preceding examination, we know that all factors, except $g_i$ ($i=1, 2, 3$) in $r_{1j}$ ($j=2, 3, 4$) and $R_i$ ($i=0, 1, 2, 3$) are not zero when $1<k<2$ and $k>2$.

\subsection{$1<k<2$}

In this section, we consider Hopf bifurcation of system \eqref{1.5} around equilibrium $\bar{E}_2^*$ when $1<k<2$.

\subsubsection{$R_i=0$ ($i=0, 1, 2, 3$)}

For $R_i=0$, the following statements hold.
\begin{theorem}\label{k1}
Let $(z,p,\eta)\in\Omega^*$ and \eqref{4.1} hold, if $R_i=0$, then the following statements hold.

(i) For $R_0=0$, $0<z<\frac{3}{4}$ and $1<k<2$ or $z>\frac{3}{4}$ and $1<k<\frac{4z-1}{4z-2}$, $\bar{E}_2^*$ is a weak focus of order at most 2;

(ii) For $R_1=0$, $\bar{E}_2^*$ is an unstable weak focus of order 1;

(iii) For $R_2=0$, $\frac{1}{3}<z<0.465571$ and $1<k<\frac{2z}{1+2z}\big(1+\frac{1}{1-z-2z^2}\big)$ (or $0.465571<z<\frac{1}{2}$ and $1<k<2$ or $z>\frac{1}{2}$ and $\frac{4z}{1+2z}<k<2$), $\bar{E}_2^*$ is a weak focus of order at most 2;

(iv) For $R_3=0$, $\bar{E}_2^*$ is a weak focus of order at most 2.
\end{theorem}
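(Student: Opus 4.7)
The plan is to treat the four conditions $R_i=0$ ($i=0,1,2,3$) one at a time. Each $R_i$ is low-degree in $\eta$, so I would first solve $R_i=0$ explicitly for $\eta$: $R_0=0$ gives $\eta=k/(k-1)$; $R_1=0$ gives $\eta=(2-k)/(k-1)$, which is positive exactly when $1<k<2$; $R_2=0$ gives $\eta=k/(2z(k-1))$; and $R_3=0$ gives the positive root of a quadratic in $\eta$. Substituting each expression into the focal-value numerators $f_1$ and $f_2$ yields polynomials $\tilde f_1,\tilde f_2$ in the remaining variables $(k,z,p)$. Since the order of a weak focus equals the index of the first nonvanishing focal value, bounding the order by $2$ is equivalent to showing that $\tilde f_1$ and $\tilde f_2$ cannot vanish simultaneously on the prescribed subregion of $\Omega^{*}$.

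For case (ii) the claim is strictly stronger (order exactly $1$ together with instability), so the strategy is different. After substituting $\eta=(2-k)/(k-1)$ into $f_1$, I would regroup the resulting polynomial in $p,z,k$ as a manifestly positive combination of monomials on $\Omega^{*}\cap\{1<k<2\}$. This directly yields $\vartheta_1>0$, which forces $\bar E_2^{*}$ to be an unstable weak focus of order exactly $1$ with no further computation.

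For cases (i), (iii), (iv) I would proceed in two steps. \textbf{Step A:} exhibit parameters in the admissible region on which $\tilde f_1=0$, confirming that the focus can be of order $\geq 2$. \textbf{Step B:} form the new resultant $\mathrm{Res}(\tilde f_1,\tilde f_2,p)$ eliminating $p$, and prove it does not vanish on the stated subregion. The piecewise $(k,z)$ windows --- for instance, $0<z<3/4,\ 1<k<2$ versus $z>3/4,\ 1<k<(4z-1)/(4z-2)$ in (i), and the three-piece split at $z\approx 0.465571$ in (iii) --- are chosen precisely so that each factor of this resultant retains a definite sign. Consequently $\tilde f_1$ and $\tilde f_2$ cannot share a $p$-root in the admissible region, so the weak focus has order at most $2$.

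The main obstacle is algebraic bulk rather than conceptual difficulty: $f_2,f_3,f_4$ are large polynomials (only $f_1$ is printed), and the secondary resultants are substantially larger still. Controlling their signs on the prescribed regions --- especially the delicate threshold $z\approx 0.465571$ in (iii) and the sum-of-positive-monomials representation needed in (ii) --- will require systematic computer-algebra assistance together with careful use of the defining inequalities of $\Omega^{*}$, namely $1/(z+pz^k)<\eta<(z+pz^k)(1+z+pz^k)/(pz^k(k-1))$, to discard spurious components of the zero set lying outside the admissible region.
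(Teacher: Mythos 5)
Your overall strategy coincides with the paper's: solve $R_i=0$ for $\eta$ (your explicit formulas $\eta=k/(k-1)$, $\eta=(2-k)/(k-1)$, $\eta=k/(2z(k-1))$ and the quadratic root for $R_3$ are all correct), substitute into the focal-value numerators, and show $f_1,f_2$ cannot vanish together on the admissible region. Your treatment of (ii) is exactly right and in fact cleaner than what the paper records: under $\eta=(2-k)/(k-1)$ the coefficients $2-k+\eta-k\eta$ and $k-2-\eta+k\eta$ vanish identically and $f_1$ collapses to $2k(k-1)p^3z^{3k}+(2k-1)kp^2z^{2k}+2k(k-1)p^2z^{2k+1}+kpz^{k+1}>0$, which is precisely the sum-of-positive-monomials representation you anticipate. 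Where you diverge is the elimination step in (i), (iii), (iv): the paper does not form $\mathrm{Res}(\tilde f_1,\tilde f_2,p)$ but instead exploits the fact that after the substitution $f_1$ drops to a quadratic in $p$ (its cubic coefficient $(k-k\eta+\eta)kz^{3k}$ vanishes when $R_0=0$), solves $f_1=0$ by the quadratic formula, identifies the unique admissible root $p_{20}>0$ via the sign of the discriminant and of the leading coefficient (this is exactly where the case split at $z=3/4$ and $k=(4z-1)/(4z-2)$ arises), and then verifies $f_2>0$ at $p=p_{20}$. Your resultant route is logically sufficient when the resultant is nonzero, but carries a risk the paper's method avoids: since $R_0R_1R_2^2R_3^2$ divides the unspecialized resultant $r_{12}=\mathrm{Res}(f_1,f_2,p)$ in \eqref{5.4}, the specialized resultant may well vanish on account of a common root at $p<0$, at a complex value, or "at infinity" from the degree drop, in which case a nonvanishing certificate is unavailable and you must fall back on the explicit-root evaluation anyway. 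You partially anticipate this by invoking the inequalities defining $\Omega^*$ to discard spurious components, but you should state the fallback explicitly; with that caveat the proposal is sound and yields the same conclusions.
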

\begin{proof}
For $R_0=0$, we have $\eta=\eta_1:=\frac{k}{k-1}$. By substituting $\eta_1$ into $f_1$ and $f_2$, we can obtain
\begin{equation*}
\begin{array}{ll}
\begin{aligned}
f_1=h_{11},\quad f_{2}=\frac{h_{22}}{(k-1)^3},
\end{aligned}
\end{array}
\end{equation*}
where
\begin{equation*}
\begin{array}{ll}
\begin{aligned}
h_{11}&=2(k-2)(k-1)z^2(1+z)+k(2k-1)p^2z^{2k}-k(2k-3)pz^{k+1}\\[2ex]
&\quad-2(k-1)(2+k)pz^{k+2}-4(k-1)kp^2z^{2k+1}
\end{aligned}
\end{array}
\end{equation*}
and we  omit the expression of $h_{22}$ for brevity. We treat $h_{11}$ as a quadratic functions of $p$, whose discriminant is
\begin{equation*}
\begin{array}{ll}
\begin{aligned}
\bar{\Delta}_p&=z^{2+2k}(16k-47k^2+44k^3-12k^4-24kz+76k^2z-76k^3z+24k^4z+16z^2-80kz^2\\[2ex]
&\quad+148k^2z^2-120k^3z^2+36k^4z^2).
\end{aligned}
\end{array}
\end{equation*}
We find that $\bar{\Delta}_p>0$ for $1<k<2$. Thus, $h_{11}$ has two real roots $p_{10}$ and $p_{20}$ ($p_{10}<0<p_{20}$) when $0<z<\frac{3}{4}$ and $1<k<2$ or $z>\frac{3}{4}$ and $1<k<\frac{4z-1}{4z-2}$, where
\begin{equation*}
\begin{array}{ll}
\begin{aligned}
p_{10}&=\frac{z^{k+1}(2k^2z+2kz-4z+2k^2-3k)-\sqrt{\bar{\Delta}_p}}{2kz^{2k}(2k+4z-1-4kz)},\\[2ex] p_{20}&=\frac{z^{k+1}(2k^2z+2kz-4z+2k^2-3k)+\sqrt{\bar{\Delta}_p}}{2kz^{2k}(2k+4z-1-4kz)}.
\end{aligned}
\end{array}
\end{equation*}
Because of $\mathrm{Lcoeff}(h_{11},p)=-kz^{2k}(1-2k-4z+4kz)>0$, then $h_{11}<0$ when $0<p<p_{20}$ and $h_{11}>0$ when $p>p_{20}$. If $z>\frac{3}{4}$ and $\frac{4z-1}{4z-2}<k<2$, then $p_{10}<p_{20}<0$ and $\mathrm{Lcoeff}(h_{11},p)=-kz^{2k}(1-2k-4z+4kz)<0$, i.e., $h_{11}<0$, $\bar{E}_2^*$ is a stable weak focus of system \eqref{1.5}. For $z>\frac{3}{4}$, $\frac{4z-1}{4z-2}<k<2$ and $p=p_{20}$ (i.e., $f_1=0$), we have $f_2>0$. Thus, $\bar{E}_2^*$ is a weak focus of order at most 2. The proof of (ii)-(iv) are similar to (i), which are omitted.
\end{proof}

\subsubsection{$R_i\neq0$ ($i=0, 1, 2, 3$)}

For $R_i\neq0$ and $1<k<2$, we have $l_2<0$, $l_3>0$ and $l_4>0$. If $l_1=0$ (i.e., $z=\breve{z}:=\frac{k(2k-3)}{10(k-1)(2-k)}$ when $\frac{3}{2}<k<2$), then the following statements hold.
\begin{figure}
\begin{center}
\begin{overpic}[scale=0.60]{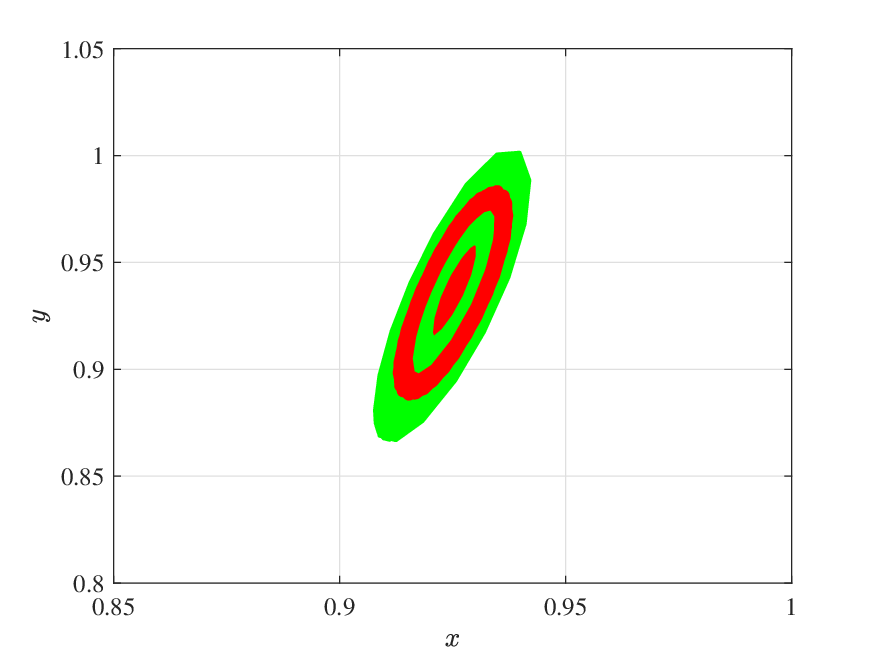}
\end{overpic}
\vspace{-1mm}
\end{center}
\caption{ For $\frac{3}{2}<k<2$, system \eqref{1.5} can undergo at most three limit cycles bifurcated by Hopf bifurcation when $l_1=0$ and $k\neq1.9839$.\label{w8}}
\end{figure}
\begin{theorem}\label{k2}
Let $(z,p,\eta)\in\Omega^*$ and \eqref{4.1} hold. For $l_1=0$, $\frac{3}{2}<k<2$ and $k\neq1.9839$, $\bar{E}_2^*$ can be a weak focus of order at most 3, see Fig.\ref{w8}.
\end{theorem}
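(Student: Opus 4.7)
The plan is to exploit the resultant chain already set up in~\eqref{5.4}. The constraint $l_1=0$ is equivalent to prescribing $z=\breve{z}(k):=\frac{k(2k-3)}{10(k-1)(2-k)}$, which is positive precisely when $\frac{3}{2}<k<2$, matching the hypothesis. First I would substitute $z=\breve{z}(k)$ together with the expressions~\eqref{5.1} for $\breve{\Lambda}_0,\breve{\gamma}$ into the focal values, reducing $f_1,f_2,f_3,f_4$ to polynomials in the two remaining parameters $(p,\eta)$ depending on the scalar $k\in(\tfrac{3}{2},2)$. Since $l_1$ divides both $r_{23}=\mathrm{Res}(g_1,g_2,\eta)$ and $r_{24}=\mathrm{Res}(g_1,g_3,\eta)$, setting $l_1=0$ forces $g_1,g_2$ and $g_1,g_3$ to share roots in $\eta$; tracing these back through $r_{12},r_{13}$ with respect to $p$ would produce a candidate triple $(\breve{z}(k),p^{*}(k),\eta^{*}(k))$ at which $f_1=f_2=f_3=0$.

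The next step is to certify that this candidate actually lies in the admissible region $\Omega^{*}$ defined by~\eqref{5.2}. I would compute $\mathrm{Res}(g_2,g_3,\eta)$ (after substituting $z=\breve{z}$) to confirm that $g_1,g_2,g_3$ share a common real root in $\eta$, then extract $p^{*}(k),\eta^{*}(k)$ and verify the two-sided inequalities defining $\Omega^{*}$. Because $\det J(E)>0$ on $\Omega^{*}$, the eigenvalues at $\bar{E}_2^{*}$ are a conjugate pair on the imaginary axis and the Hopf transversality condition is automatic along the curve $\mathrm{Tr}(J(E))=0$ used to derive~\eqref{5.1}. At this point $\vartheta_1=\vartheta_2=\vartheta_3=0$, so $\bar{E}_2^{*}$ is at least a third-order weak focus.

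Finally, to bound the order above by three, I would substitute $(p^{*}(k),\eta^{*}(k),\breve{z}(k))$ into $f_4$ and study the resulting one-variable function of $k$ on $(\tfrac{3}{2},2)$. The exceptional values of $k$ at which the order could jump past three are precisely its real zeros inside this interval, and a Sturm-sequence or numerical resultant analysis identifies a unique such zero at $k\approx 1.9839$. Excluding this value yields $\vartheta_4\neq 0$, so $\bar{E}_2^{*}$ is a weak focus of order exactly three; by the standard correspondence between weak-focus order and cyclicity under Hopf bifurcation, this bounds the number of bifurcating limit cycles by three, consistent with Fig.~\ref{w8}.

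The main obstacle I anticipate is the symbolic size of $g_2,g_3$ and of $f_4$ after the substitutions: the argument depends on clean factorizations emerging once $z=\breve{z}(k)$ is imposed and on ruling out spurious branches of the resultant variety that lie outside $\Omega^{*}$. The critical value $k\approx 1.9839$ cannot be produced by hand, so a computer-algebra certification (Sylvester resultants combined with sign analysis along $\Omega^{*}$) is essential both for establishing the existence of the admissible common root $(p^{*},\eta^{*})$ and for pinning down the unique exceptional $k$ where $\vartheta_4$ vanishes.
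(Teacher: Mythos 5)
Your proposal inverts the logical direction of both the statement and the paper's argument. ``Weak focus of order at most $3$'' is an upper bound: one must show that $f_1=f_2=f_3=0$ has \emph{no} solution in $\Omega^*$ once $l_1=0$ is imposed and $k\neq 1.9839$. The paper does exactly this: it substitutes $z=\breve{z}$ into $f_1,f_2,f_3$ to obtain $\mathcal{L}_1,\mathcal{L}_2,\mathcal{L}_3$, computes $\breve{r}_{12}=\mathrm{Res}(\mathcal{L}_1,\mathcal{L}_2,p)$ and $\breve{r}_{13}=\mathrm{Res}(\mathcal{L}_1,\mathcal{L}_3,p)$, disposes of the special factors $q_1,q_2$ by showing that on $q_i=0$ one already has $f_2>0$ whenever $f_1=0$, and finally verifies $\mathrm{Res}(\mathcal{R}_{10},\mathcal{R}_{20},\eta)\neq 0$ for $k\neq1.9839$, whence $V(f_1,f_2,f_3,l_1)\cap\Omega^*=\emptyset$. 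You instead set out to \emph{construct} a point $(p^*(k),\eta^*(k),\breve{z}(k))\in\Omega^*$ at which $f_1=f_2=f_3=0$ and then to evaluate $f_4$ there; if such a point existed with $f_4\neq0$, the equilibrium would be a weak focus of order $4$, which would refute the theorem rather than prove it. (Your phrase ``at least a third-order weak focus'' followed by ``order exactly three'' when $\vartheta_1=\vartheta_2=\vartheta_3=0$ and $\vartheta_4\neq0$ is also off by one under the convention used throughout the paper, where order $n$ means $\vartheta_1=\cdots=\vartheta_{n-1}=0$ and $\vartheta_n\neq0$.)

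The step that actually fails is the inference ``since $l_1$ divides $r_{23}$ and $r_{24}$, setting $l_1=0$ forces $g_1,g_2$ and $g_1,g_3$ to share roots in $\eta$, and tracing back through $r_{12},r_{13}$ produces a triple with $f_1=f_2=f_3=0$.'' Vanishing of a resultant is only a \emph{necessary} condition for a common zero; even when it holds, the common zero may be complex, may lie outside $\Omega^*$, or may arise from vanishing leading coefficients, and $g_i=0$ is itself only a necessary condition extracted from $\mathrm{Res}(f_1,f_{i+1},p)=0$. The appearance of the factor $l_1$ in $r_{23}$ and $r_{24}$ merely signals that the first elimination chain is inconclusive on the locus $l_1=0$, which is precisely why the paper restarts the elimination after substituting $z=\breve{z}$. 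Likewise, the exceptional value $k\approx1.9839$ arises in the paper as a zero of $\mathrm{Res}(\mathcal{R}_{10},\mathcal{R}_{20},\eta)$, i.e.\ the value at which the emptiness certificate degenerates, not as a zero of $f_4$ along a candidate curve as your plan supposes.
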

\begin{proof}
For $l_1=0$ and $\frac{3}{2}<k<2$, we have $z=\breve{z}$. By substituting $\breve{z}$ into $f_1$, $f_2$ and $f_3$, we can obtain
\begin{equation*}
\begin{array}{ll}
\begin{aligned}
f_1&=\frac{k}{10^{3+3k}(2-k)^2(k-1)^3}\mathcal{L}_1,\quad f_{2}=\frac{2k}{10^{7+7k}(2-k)^5(k-1)^7}\mathcal{L}_2,\\[2ex]
f_3&=\frac{k}{10^{11+11k}(2-k)^8(k-1)^{11}}\mathcal{L}_3,
\end{aligned}
\end{array}
\end{equation*}
where $\mathcal{L}_i$ ($i=1, 2, 3$) are omitted here for brevity. Through calculate, we have
\begin{equation*}
\begin{array}{ll}
\begin{aligned}
\breve{r}_{12}:&=\mathrm{Res}(\mathcal{L}_1,\mathcal{L}_2,p)=2^{35+21k}5^{27+21k}\eta^3k^6(1+\eta)^3(2-k)^{15}(k-1)^{42}(2k-3)^6(27k-8k^2-20)R_0R_1q_1^3q_2^2\mathcal{R}_{10},\\[2ex]
\breve{r}_{13}:&=\mathrm{Res}(\mathcal{L}_1,\mathcal{L}_3,p)=2^{60+33k}5^{43+33k}\eta^3k^9(1+\eta)^4(2-k)^{33}(k-1)^{66}(2k-3)^6(27k-8k^2-20)R_0R_1q_1^4q_2^2\mathcal{R}_{20}.
\end{aligned}
\end{array}
\end{equation*}
If $q_1=0$ i.e., $\eta=\breve{\eta}:=\frac{5(2-k)}{2k-3}$, then
\begin{equation*}
\begin{array}{ll}
\begin{aligned}
f_1&=100(k-1)^2(2-k)(7k^2-18k+10)\bigg(\frac{k(3-2k)}{k^2-3k+2}\bigg)^{2k}p^2\\[2ex]
&\quad+2^{k+2}5^{k+1}k(k-1)(2k-3)^2\bigg(\frac{k(3-2k)}{k^2-3k+2}\bigg)^kp+100^kk(2k-3)(3k-2)(8k^2-27k+20).
\end{aligned}
\end{array}
\end{equation*}
We treat $f_1$ as a quadratic functions of $p$, whose discriminant is
\begin{equation*}
\begin{array}{ll}
\begin{aligned}
\breve{\Delta}_p&=4^{k+2}25^{k+1}k(k-1)^4(2k-3)(800-2547k+2624k^2-1111k^3+168k^4)\bigg(\frac{k(3-2k)}{k^2-3k+2}\bigg)^{2k}.
\end{aligned}
\end{array}
\end{equation*}
We find that $\bar{\Delta}_p>0$ for $1.74557<k<2$ and $\bar{\Delta}_p<0$ for $\frac{3}{2}<k<1.74557$. Thus, $f_1$ has two real roots $p_{10}$ and $p_{20}$ ($0<p_{10}<p_{20}$) when $1.74557<k<1.75952$ or $p_{20}<0<p_{10}$ when $1.75952<k<2$, where
\begin{equation*}
\begin{array}{ll}
\begin{aligned}
p_{10}&=\frac{-2^{k+2}5^{k+1}k(k-1)(2k-3)^2\bigg(\frac{k(3-2k)}{k^2-3k+2}\bigg)^k-\sqrt{\breve{\Delta}_p}}{200\bigg(\frac{k(3-2k)}{k^2-3k+2}\bigg)^{2k}(2-k)(k-1)^2(10-18k+7k^2)},\\[2ex]
p_{20}&=\frac{-2^{k+2}5^{k+1}k(k-1)(2k-3)^2\bigg(\frac{k(3-2k)}{k^2-3k+2}\bigg)^k+\sqrt{\breve{\Delta}_p}}{200\bigg(\frac{k(3-2k)}{k^2-3k+2}\bigg)^{2k}(2-k)(k-1)^2(10-18k+7k^2)}.
\end{aligned}
\end{array}
\end{equation*}

For $p=p_{10}$ or $p=p_{20}$, we always have $f_2>0$, i.e., $\bar{E}_2^*$ is a stable weak focus of order 2 when $z=\breve{z}$ and $\eta=\breve{\eta}$. (For $q_2=0$ is similar to $q=q_1$, which is omitted here.) When $q_i\neq0$ ($i=1, 2$), the first seven factors in $\breve{r}_{12}$ and $\breve{r}_{13}$ are all positive $\frac{3}{2}<k<2$, then
\begin{equation*}
\begin{array}{ll}
\begin{aligned}
V(f_1,f_2,f_3,l_1)\cap\Omega^*=V(\mathcal{L}_1,\mathcal{L}_2,\mathcal{L}_3)\cap\Omega^*=V(\mathcal{L}_1,\mathcal{L}_2,\mathcal{L}_3,\mathcal{R}_{10},\mathcal{R}_{20})\cap\Omega^*.
\end{aligned}
\end{array}
\end{equation*}
Next, we have
\begin{equation*}
\begin{array}{ll}
\begin{aligned}
\mathrm{Res}(\mathcal{R}_{10},\mathcal{R}_{20},\eta)\neq0.\\[2ex]
\end{aligned}
\end{array}
\end{equation*}
when $k\neq1.9839$. Hence, $V(\mathcal{R}_{10},\mathcal{R}_{20})\cap\Omega^*=\emptyset$, then $V(f_1,f_2,f_3,l_1)\cap\Omega^*=V(\mathcal{L}_1,\mathcal{L}_2,\mathcal{L}_3)\cap\Omega^*=\emptyset$, and $\bar{E}_2^*$ is a weak focus of order at most 3 when $z=\breve{z}$ and $k\neq1.9839$.
\end{proof}

For $R_i\neq0$ ($i=0, 1, 2, 3$) and $l_1\neq0$, we will consider it in the future.

\subsection{$k=2$}

In this section, we consider Hopf bifurcation of system \eqref{1.5} around equilibrium $E_{02}^*$. When $k=2$, according to formal series method \cite{ZTW}, we can get the first two focal values as follows
\begin{equation*}
\begin{array}{ll}
\begin{aligned}
f_1=\frac{pzL_{11}}{4\eta(p\eta z^2+\eta z-1)^{3/2}},\quad f_2=\frac{pL_{22}}{96\eta^3z(p\eta z^2+\eta z-1)^{7/2}},
\end{aligned}
\end{array}
\end{equation*}
where
\begin{equation*}
\begin{array}{ll}
\begin{aligned}
L_{11}&=1+3pz+2pz^2+2p^2z^3-\eta-2z\eta-3pz^2\eta-p^2z^3\eta,\\[2ex]
L_{22}&=-72-216pz-36pz^2+180p^2z^3+144p^2z^4-144p^3z^6-144p^4z^7+72\eta+284z\eta\\[2ex]
&\quad+632pz^2\eta-177pz^3\eta+384p^2z^3\eta-682p^2z^4\eta-152p^2z^5\eta-493p^3z^5\eta+80p^3z^6\eta\\[2ex]
&\quad+372p^3z^7\eta+232p^4z^7\eta+744p^4z^8\eta+372p^5z^9\eta-140z\eta^2-345z^2\eta^2-96pz^2\eta^2\\[2ex]
&\quad-659pz^3\eta^2+485pz^4\eta^2-527p^2z^4\eta^2+1230p^2z^5\eta^2-193p^3z^5\eta^2-230p^2z^6\eta^2\\[2ex]
&\quad+1087p^3z^6\eta^2-876p^3z^7\eta^2+342p^4z^7\eta^2-248p^3z^8\eta^2-1062p^4z^8\eta^2-744p^4z^9\eta^2\\[2ex]
&\quad-416p^5z^9\eta^2-744p^5z^{10}\eta^2-248p^6z^{11}\eta^2+65z^2\eta^3+127z^3\eta^3+69pz^3\eta^3+189pz^4\eta^3\\[2ex]
&\quad+15p^2z^4\eta^3-272pz^5\eta^3+58p^2z^5\eta^3-721p^2z^6\eta^3-14p^3z^6\eta^3+248p^2z^7\eta^3\\[2ex]
&\quad-715p^3z^7\eta^3-10p^4z^7\eta^3+868p^3z^8\eta^3-355p^4z^8\eta^3+1116p^4z^9\eta^3-89p^5z^9\eta^3\\[2ex]
&\quad+620p^5z^{10}\eta^3+124p^6z^{11}\eta^3+3z^3\eta^4+6z^4\eta^4+26pz^4\eta^4+53pz^5\eta^4+42p^2z^5\eta^4\\[2ex]
\end{aligned}
\end{array}
\end{equation*}
\begin{equation*}
\begin{array}{ll}
\begin{aligned}
&\quad+137p^2z^6\eta^4+19p^3z^6\eta^4+154p^3z^7\eta^4+79p^4z^8\eta^4+15p^5z^9\eta^4.
\end{aligned}
\end{array}
\end{equation*}
\begin{figure}
\begin{center}
\begin{overpic}[scale=0.50]{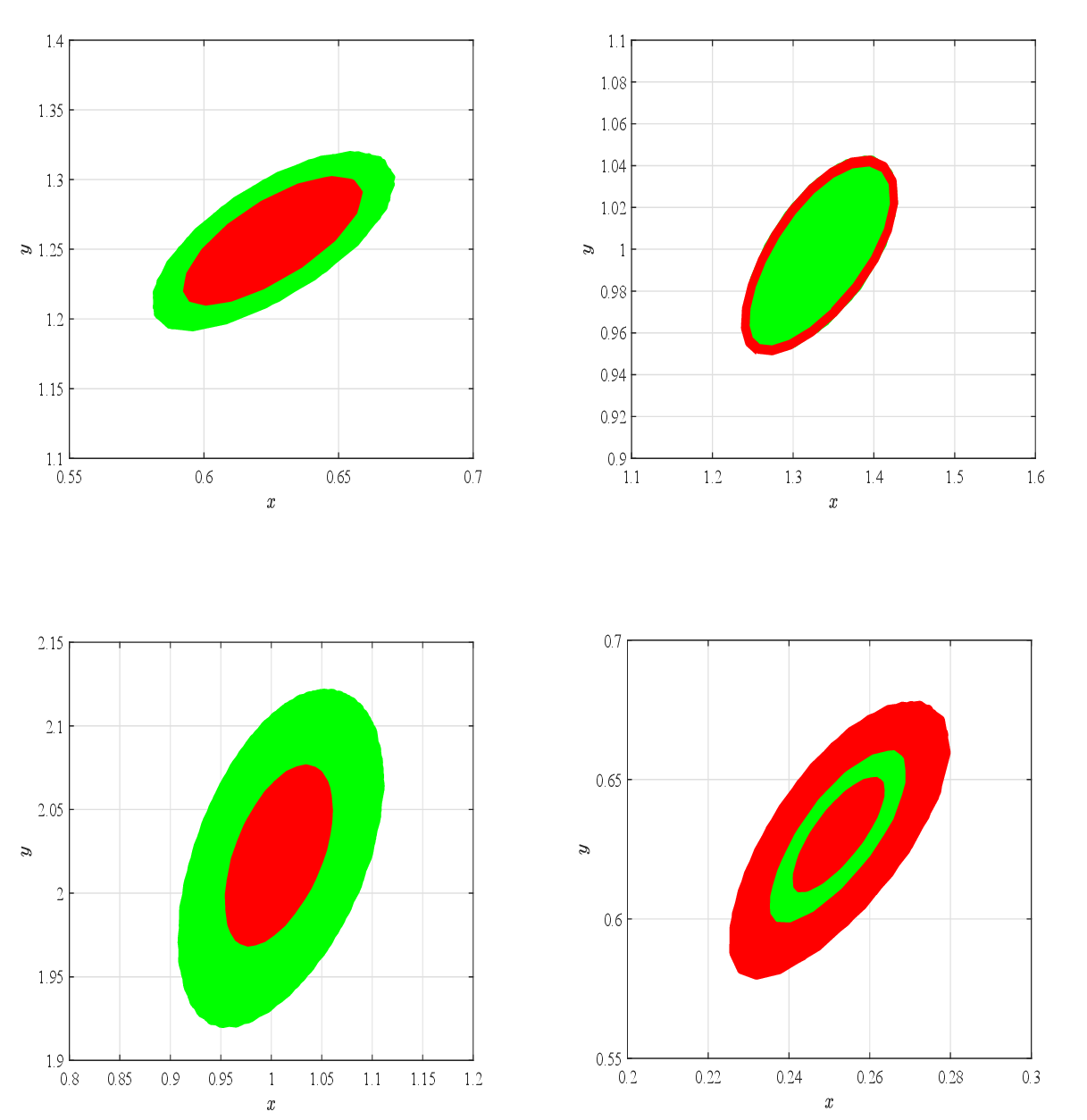}
\end{overpic}
\put(-228,145){$(a)$}
\put(-78,145){$(b)$}
\put(-228,-13){$(c)$}
\put(-78,-13){$(d)$}
\vspace{-1mm}
\end{center}
\caption{ For $k=2$, system \eqref{1.5} can undergo at most two limit cycles bifurcated by Hopf bifurcation. (a) $\Lambda_0=5.106$, $p=1$, $\gamma=5.24$ and $\eta=2.01$; (b) $\Lambda_0=5.417$, $p=1$, $\gamma=7.195$ and $\eta=0.75$; (c) $\Lambda_0=6.02$, $p=1$, $\gamma=6$ and $\eta=2.01$; (d) $\Lambda_0=1.753$, $p=8$, $\gamma=2.626$ and $\eta=2.5$.\label{w4}}
\end{figure}
When $k=2$, we have the following results.
\begin{lemma}
For system \eqref{1.5}, the following statements hold.

(i) If $0<z<\frac{3}{4}$ and $2<\eta<\frac{2z-3}{4-z}+\frac{2\sqrt{9+4z}}{(4-z)\sqrt{z}}$ or $z>0$ and $\frac{1}{1+2z}<\eta<2$, then $E_{02}^*$ is a weak focus of order 1, i.e., system \eqref{1.5} admit one limit cycle bifurcated from Hopf bifurcation, see Fig.\ref{w4}(a) and Fig.\ref{w4}(b).

(ii) If $\frac{3}{4}<z<4$ and $2<\eta<\frac{2z-3}{4-z}+\frac{2\sqrt{9+4z}}{(4-z)\sqrt{z}}$, then $E_{02}^*$ is a stable weak focus of order 1, i.e., system \eqref{1.5} admit one stable limit cycle bifurcated from Hopf bifurcation, see Fig.\ref{w4}(c).

(iv) If $f_1=0$, we detected that $f_2>0$ for $0<z<\frac{3}{4}$ and $2<\eta<\frac{2z-3}{4-z}+\frac{2\sqrt{9+4z}}{(4-z)\sqrt{z}}$ or $z>0$ and $\frac{1}{1+2z}<\eta<2$, then $E_{02}^*$ is a weak focus of order 2, i.e., system \eqref{1.5} admit two limit cycles bifurcated from Hopf bifurcation, see Fig.\ref{w4}(d).
\end{lemma}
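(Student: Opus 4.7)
The plan is to apply the classical weak-focus criterion: on the trace-zero surface \eqref{5.1} with $Det(J(E))>0$, the equilibrium $E_{02}^*$ is a weak focus of order exactly $n$ iff $\vartheta_1=\cdots=\vartheta_{n-1}=0\neq\vartheta_n$; the sign of $\vartheta_n$ gives its stability, and a generic two-parameter unfolding in $(\Lambda_0,\gamma)$ (breaking first $\vartheta_1$ and then the trace condition) sheds $n$ concentric small-amplitude limit cycles. Since the paper has already reduced $\vartheta_1,\vartheta_2$ for $k=2$ to the explicit factors $L_{11}$ and $L_{22}$ displayed just above the lemma (with positive prefactors, since $p,z>0$ on $\Omega^*$), the whole task is (a) to control the sign of $L_{11}$ as a function of the admissible $p>0$ throughout each $(z,\eta)$-strip in (i), (ii), and (b) to establish the positivity of $L_{22}\big|_{L_{11}=0}$ on the strip in (iv).

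For (i) and (ii), I would regard
\[
L_{11} = z^3(2-\eta)p^2 + z(3+2z-3z\eta)p + \bigl(1-(1+2z)\eta\bigr)
\]
as a quadratic in $p$. A short completion-of-squares on its $p$-discriminant yields
\[
\tfrac{1}{z^2}\,\mathrm{disc}_p(L_{11}) \;=\; z(z-4)\eta^2 - 2z(3-2z)\eta + (9+4z+4z^2),
\]
whose roots in $\eta$ are exactly $\eta_\pm(z)=\frac{2z-3}{4-z}\pm\frac{2\sqrt{9+4z}}{(4-z)\sqrt{z}}$, so the boundary $\eta_+(z)$ in the lemma is the upper root. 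A Vieta check then shows that whenever $(z,\eta)$ lies in the strips listed in (i), (ii) the product $C/A=(1-(1+2z)\eta)/\bigl(z^3(2-\eta)\bigr)$ is positive and the sum $-B/A$ is negative, so both $p$-roots of $L_{11}$ lie in $p<0$. Consequently on $p>0$ the sign of $L_{11}$ agrees with that of its leading coefficient $z^3(2-\eta)$: negative on the strip in (ii) and on the $\eta>2$ strip of (i) (stable weak focus of order $1$), positive on the $\eta<2$ strip of (i). In all three cases $L_{11}\neq0$, so $E_{02}^*$ is a weak focus of order exactly $1$, and the standard Hopf theorem produces one small limit cycle under a generic trace-breaking perturbation.

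For (iv), on the locus $L_{11}=0$ I would either solve the quadratic for $p=p_\pm(z,\eta)$ and substitute into $L_{22}$, or, more cleanly, form the Sylvester resultant $R(z,\eta):=\mathrm{Res}_p(L_{11},L_{22})$; the claim $f_2>0$ reduces to showing that $R$ is sign-definite positive on the two strips. A continuity argument anchored on the numerical witness from Fig.~\ref{w4}(d), where $L_{22}>0$ can be checked by direct substitution, then pins the sign as positive throughout the strips once the nonvanishing of $R$ is confirmed. This produces $\vartheta_2>0$ on $\{\vartheta_1=0\}$, so $E_{02}^*$ is a weak focus of order exactly $2$, and the standard degenerate-Hopf unfolding in $(\Lambda_0,\gamma)$ yields the claimed two concentric small limit cycles.

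The principal obstacle is the algebraic heft of $L_{22}$: it is a polynomial with several dozen monomials in $(p,z,\eta)$, and the resultant $R(z,\eta)$ in (iv) will split into several irreducible factors of sizeable total degree; ruling out the vanishing of each nontrivial factor on the prescribed strips will require either a Sturm/sub-resultant chain or a CAD/interval check. By contrast, the $L_{11}$ step for (i), (ii) is elementary: the discriminant identity plus the Vieta sign check is one short computation and delivers the clean boundary $\eta_+(z)$ exactly as it appears in the statement.
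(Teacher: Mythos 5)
Your reduction to the quadratic $L_{11}$ in $p$ and your discriminant computation are correct and match the paper: the boundary curve $\eta_+(z)=\frac{2z-3}{4-z}+\frac{2\sqrt{9+4z}}{(4-z)\sqrt{z}}$ is indeed the upper root of $\mathrm{disc}_p(L_{11})=0$ in $\eta$. However, your Vieta sign check for part (i) is wrong, and the error is fatal to that part. On the strip $z>0$, $\frac{1}{1+2z}<\eta<2$, the product of the $p$-roots is $C/A=\bigl(1-(1+2z)\eta\bigr)/\bigl(z^3(2-\eta)\bigr)$ with \emph{negative} numerator (since $\eta>\frac{1}{1+2z}$) and \emph{positive} denominator (since $\eta<2$), so $C/A<0$ and the roots have opposite signs: exactly one root is positive. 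On the strip $0<z<\frac34$, $2<\eta<\eta_+(z)$, both $A$ and $C$ are negative, so $C/A>0$, but the sum $-B/A=-z(3+2z-3z\eta)/\bigl(z^3(2-\eta)\bigr)$ is \emph{positive} there (e.g.\ $z=\frac12$, $\eta=2.1$ gives two roots with sum $34$ and product $256$), so both roots are positive. In neither strip do both roots lie in $p<0$, hence $L_{11}$ is \emph{not} sign-definite on $p>0$ and does vanish at admissible parameter values. This is exactly what the paper's proof establishes ($0<\tilde p_{11}<\tilde p_{22}$ in the first strip, $\tilde p_{11}<0<\tilde p_{22}$ in the second), and it is what makes part (iv) non-vacuous: your conclusion that $L_{11}\neq0$ throughout the strips of (i) would render the hypothesis $f_1=0$ of (iv) unattainable on those very strips, contradicting the lemma you are proving. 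Your analysis is correct only for the strip of part (ii), where indeed both roots are negative and $f_1<0$ follows from the sign of the leading coefficient.

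For part (iv) your plan (eliminate $p$ via $\mathrm{Res}_p(L_{11},L_{22})$ and certify sign-definiteness of the surviving factors on the strips) is reasonable and, if carried out, would be \emph{more} rigorous than the paper, which merely asserts ``we detected that $f_2>0$'' without detail; but as written it remains a programme rather than a proof, and the continuity argument anchored at a single numerical witness requires the nonvanishing of the resultant on the (two-dimensional, and in the second case unbounded) strips, which you have not established. The fix for part (i) is straightforward: redo the Vieta analysis case by case as above, and state the order-1 conclusion for $p$ away from the positive roots of $L_{11}$, with stability determined by which side of those roots $p$ lies on.
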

\begin{proof}
We treat $L_{11}$ as a quadratic functions of $p$, whose discriminant is
\begin{equation*}
\begin{array}{ll}
\begin{aligned}
\tilde{\Delta}^*=z^2(\eta^2z^2+4\eta z^2+4z^2-4\eta^2z-6\eta z+4z+9).
\end{aligned}
\end{array}
\end{equation*}
By analysis, we have $\tilde{\Delta}^*>0$ ($\tilde{\Delta}^*<0$) when $0<z<4$ and $0<\eta<\frac{2z-3}{4-z}+\frac{2\sqrt{9+4z}}{(4-z)\sqrt{z}}$ or $z>4$ ($0<z<4$ and $\eta>\frac{2z-3}{4-z}+\frac{2\sqrt{9+4z}}{(4-z)\sqrt{z}}$), i.e., $L_{11}=0$ has at most two real roots $\tilde{p}_{11}$ and $\tilde{p}_{22}$, where
\begin{equation*}
\begin{array}{ll}
\begin{aligned}
\tilde{p}_{11}=\frac{3z+2z^2-3\eta z^2-\sqrt{\tilde{\Delta}^*}}{2z^3(\eta-2)},\quad \tilde{p}_{22}=\frac{3z+2z^2-3\eta z^2+\sqrt{\tilde{\Delta}^*}}{2z^3(\eta-2)}.
\end{aligned}
\end{array}
\end{equation*}

If $0<z<\frac{3}{4}$ and $2<\eta<\frac{2z-3}{4-z}+\frac{2\sqrt{9+4z}}{(4-z)\sqrt{z}}$, then $L_{11}=0$ has two real roots $\tilde{p}_{11}$ and $\tilde{p}_{22}$ ($0<\tilde{p}_{11}<\tilde{p}_{22}$). And $\mathrm{Lcoeff}(L_{11},p)=z^3(2-\eta)<0$. Thus, if $0<p<\tilde{p}_{11}$ and $p>\tilde{p}_{22}$ ($\tilde{p}_{11}<p<\tilde{p}_{22}$), we have $f_1<0$ ($f_1>0$), i.e., $E_{02}^*$ is a stable (an unstable) weak focus of order 1.

If $z>0$ and $\frac{1}{1+2z}<\eta<2$, we have $\tilde{p}_{11}<0<\tilde{p}_{22}$. Thus, if $0<p<\tilde{p}_{22}$ ($p>\tilde{p}_{22}$), then $f_1<0$ ($f_1>0$), i.e., $E_{02}^*$ is a stable (an unstable) weak focus of order 1.

If $\frac{3}{4}<z<4$ and $2<\eta<\frac{2z-3}{4-z}+\frac{2\sqrt{9+4z}}{(4-z)\sqrt{z}}$, we have $\tilde{p}_{11}<\tilde{p}_{22}<0$. Thus, $f_1<0$, i.e., $E_{02}^*$ is a stable weak focus of order 1.

If $f_1=0$, we detected that $f_2>0$ for $0<z<\frac{3}{4}$ and $2<\eta<\frac{2z-3}{4-z}+\frac{2\sqrt{9+4z}}{(4-z)\sqrt{z}}$ or $z>0$ and $\frac{1}{1+2z}<\eta<2$.

\end{proof}

\subsection{$k>2$}

In this section, we consider Hopf bifurcation of system \eqref{1.5} around equilibrium $\bar{E}_5^*$ (or $\bar{E}_3^*$). In the following, Next, we mainly analyze the dynamic phenomenon near $\bar{E}_5^*$ and $\bar{E}_3^*$ is similar to $\bar{E}_5^*$, which is omitted here.

\subsubsection{$R_i=0$ ($i=0, 2, 3$)}
\begin{figure}[ht!]
\begin{center}
\begin{overpic}[scale=0.60]{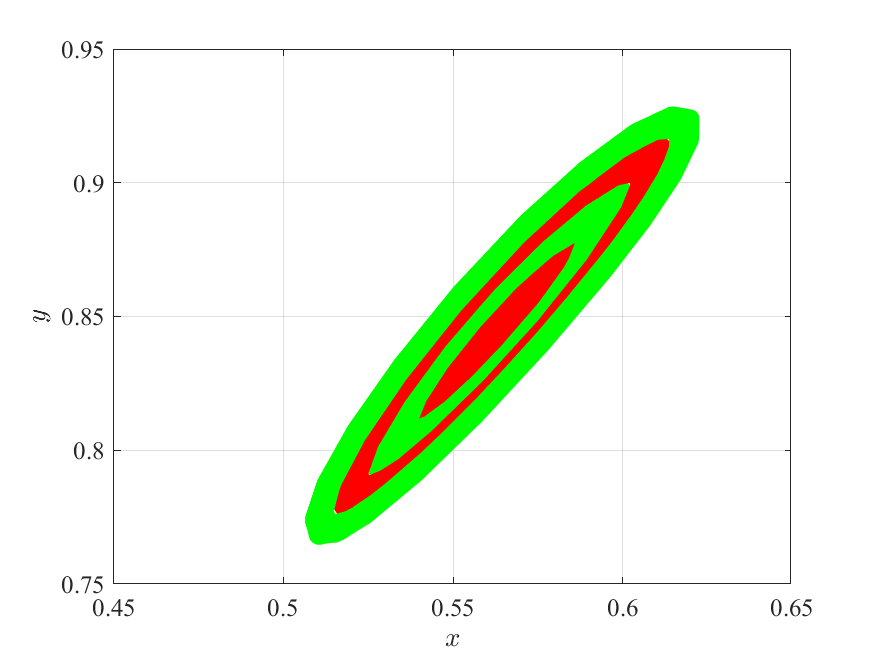}
\end{overpic}
\vspace{-1mm}
\end{center}
\caption{ For $k>2$, system \eqref{1.5} can undergo at most three limit cycles bifurcated by Hopf bifurcation when $R_i=0$, where $k=3$, $\Lambda_0=4.03906$, $p=1.05$, $\gamma=3.5075$ and $\eta=1.5$.\label{w5}}
\end{figure}
For $R_i=0$ ($i=0, 2, 3$), we have the following results.
\begin{theorem}\label{k4}
Let $(z,p,\eta)\in\Omega^*$ and \eqref{4.1} hold, if $R_i=0$ ($i=0, 2, 3$), then $\bar{E}_5^*$ is a weak focus of order at most 3, i.e., system \eqref{1.5} can undergo at most three limit cycles bifurcated by Hopf bifurcation, see Fig.\ref{w5}.
\end{theorem}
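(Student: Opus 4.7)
The plan is to emulate the approach used in Theorems \ref{k1} and \ref{k2}, handling the three cases $R_0=0$, $R_2=0$, $R_3=0$ separately. Note that $R_1=0$ need not be treated here because $R_1=0$ would force $\eta=(2-k)/(k-1)<0$ when $k>2$, which lies outside $\Omega^*$. In each of the remaining cases the task is to substitute the relation into the focal-value numerators $f_1$, $f_2$, $f_3$, and show that their common zero set in $\Omega^*$ is empty, which forces the order of $\bar{E}_5^*$ as a weak focus to be at most $3$ and hence, by the standard weak-focus bifurcation theorem, yields at most three small-amplitude limit cycles from Hopf bifurcation.

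For the case $R_0=0$, i.e.\ $\eta=k/(k-1)$, I would substitute this into $f_1$, $f_2$, $f_3$; each then becomes a polynomial in $(p,z,k)$. As in Theorem \ref{k1}, the resulting $f_1$ is a quadratic in $p$, so I expect that $f_1=0$ can be solved for $p$ explicitly. Next I compute the Sylvester resultants $\operatorname{Res}(f_1,f_2,p)$ and $\operatorname{Res}(f_1,f_3,p)$ to eliminate $p$, producing polynomials $h_{12}(z,k)$ and $h_{13}(z,k)$. A second resultant $\operatorname{Res}(h_{12},h_{13},z)$ then eliminates $z$, and one checks that it has no roots compatible with $k>2$, $z>0$ and the positivity/inequality constraints defining $\Omega^*$. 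For $R_2=0$, i.e.\ $\eta=k/(2(k-1)z)$, the same scheme applies: substitute, eliminate $p$ by resultant, eliminate $z$ by a second resultant, and verify vacuity of the admissible zero set. For $R_3=0$, I solve the quadratic $R_3=\eta^2 z(k-1)+2\eta z(k-1)-k=0$ for the unique positive root
\begin{equation*}
\eta=\frac{-(k-1)z+\sqrt{(k-1)^2 z^2+k(k-1)z}}{(k-1)z},
\end{equation*}
substitute, clear radicals, and again perform the double-resultant elimination.

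In each case, after the resultant calculations, the resulting expression will factor into a product of simple pieces (powers of $k$, $z$, $k-1$, etc., all nonzero in $\Omega^*$) times a large irreducible polynomial whose sign on the admissible semi-algebraic region one must control. For cases where the final resultant is not identically zero but does admit real roots, one argues that the common root lies outside $\Omega^*$ by combining the sign of $\mathcal{D}=p\eta z^k+\eta z-1$, the constraint $\eta>1/(z+pz^k)$, and the upper bound $\eta<(z+pz^k)(1+z+pz^k)/(pz^k(k-1))$. A numerically illustrative case is already supplied in Fig.\ref{w5}, which also suggests that the upper bound $3$ is sharp for at least one parameter choice on $R_i=0$.

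The main obstacle is the sheer size of $f_2$ and $f_3$ (and consequently of the iterated resultants), which the paper already declines to display. The computation has to be performed in a computer algebra system, and special care is needed to rule out spurious roots arising from vanishing leading coefficients in $p$ when forming $\operatorname{Res}(f_1,f_i,p)$; these degenerate branches must be checked separately by directly inspecting $f_1,f_2,f_3$ on the degeneracy loci. Once this bookkeeping is carried out for all three cases $R_0=0$, $R_2=0$, $R_3=0$, the emptiness of $V(f_1,f_2,f_3)\cap\{R_i=0\}\cap\Omega^*$ is established, the order of $\bar{E}_5^*$ is bounded by $3$, and the limit-cycle count follows from the standard degenerate Hopf bifurcation theorem together with the transversality obtained by perturbing the parameters $(\Lambda_0,\gamma,\eta,p)$ within $\Omega^*$.
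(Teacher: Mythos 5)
Your proposal takes essentially the same route as the paper: the paper's own proof of this theorem is stated only as ``similar to Theorem~\ref{k1},'' i.e., substitute the relation $R_i=0$ into the focal-value numerators $f_1,f_2,f_3$ and verify by resultant-based elimination (as in Theorems~\ref{k1} and~\ref{k2}) that their common zero set in $\Omega^*$ is empty, which is exactly your plan. Your observations that $R_1=0$ forces $\eta=(2-k)/(k-1)<0$ for $k>2$ (hence is excluded), that the $p^3$-coefficient of $f_1$ vanishes when $R_0=0$, and that degenerate branches of the resultants must be checked separately are all consistent with, and slightly more careful than, the paper's omitted argument.
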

\begin{proof}
The proof is similar to Theorem \ref{k1}, which is omitted here.
\end{proof}

\subsubsection{$R_i\neq0$ ($i=0, 2, 3$)}

For $R_i\neq0$ ($i=0, 2, 3$) and $k>2$, we have $l_1>0$ and $l_3>0$. If $l_2=0$ (or $l_4=0$) when $k>2$, then the following statements hold.
\begin{theorem}\label{k5}
Let $(z,p,\eta)\in\Omega^*$ and \eqref{4.1} hold. For $l_2=0$ (or $l_4=0$), $\bar{E}_2^*$ can be a weak focus of order at most 3, i.e., system \eqref{1.5} can undergo at most three limit cycles bifurcated by Hopf bifurcation, see Fig.\ref{w6}.
\end{theorem}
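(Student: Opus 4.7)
The plan is to mirror the resultant-elimination scheme already used for Theorem~\ref{k2} (the $l_1=0$ case) and push it one further step, showing that once $l_2=0$ (resp.\ $l_4=0$) is imposed, the common vanishing locus of $f_1,f_2,f_3,f_4$ is empty inside $\Omega^*$, so the weak focus $\bar E_2^*$ has order at most $3$. The first step is to exploit $l_2=0$ to eliminate one parameter: viewing $l_2$ as a quadratic in $z$ (with leading coefficient $12(k-1)^3$, which is nonzero for $k>1$), I would solve $l_2=0$ and substitute the resulting $z=z_*(k)$ into each of the $f_i$, producing reduced expressions $\mathcal{L}_i(p,\eta,k)$ analogous to the $\mathcal{L}_i$ in the proof of Theorem~\ref{k2}. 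After clearing the strictly positive prefactors that appear from $(p\eta z^k+\eta z-1)$, $z$, $\eta$, etc.\ (all positive on $\Omega^*$), one is left with polynomial conditions in $(p,\eta,k)$ whose common zeros control the focus order.

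Next I would compute the Sylvester resultants $\mathrm{Res}(\mathcal{L}_1,\mathcal{L}_j,p)$ for $j=2,3,4$, factor them, and isolate the ``essential'' factors $\tilde g_j(\eta,k)$ after peeling off the product of $R_0R_1R_2R_3$ and other manifestly positive terms that are already known to be nonzero on $\Omega^*$ (by the running hypothesis $R_i\neq0$ and by $(z,p,\eta)\in\Omega^*$). Then I would take a second round of resultants $\mathrm{Res}(\tilde g_2,\tilde g_3,\eta)$ and $\mathrm{Res}(\tilde g_2,\tilde g_4,\eta)$ and check that these are not identically zero as polynomials in $k$ on the range $k>2$. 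Any finitely many exceptional values of $k$ where they vanish can be listed and treated separately (exactly as the paper excludes the isolated $k=1.9839$ in Theorem~\ref{k2}). This will establish
\[
V(f_1,f_2,f_3,f_4,l_2)\cap\Omega^*=\emptyset,
\]
so no more than three of the focal values can vanish simultaneously; by the Hopf bifurcation theorem one then obtains at most three small-amplitude limit cycles. The argument for $l_4=0$ is structurally identical: one solves the quartic $l_4=0$ for $z$ (selecting the branches that lie in the positivity region) and repeats the resultant elimination.

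To actually realize the three limit cycles pictured in Fig.~\ref{w6}, I would finish by exhibiting a nested perturbation scheme: at a fixed $(k,z,p,\eta)\in\Omega^*$ with $l_2=0$ (or $l_4=0$) chosen so that $f_1=f_2=f_3=0$ but $f_4\neq0$, perturb the bifurcation parameters in the order dictated by \eqref{4.1} so that successive sign changes $f_3$, $f_2$, $f_1$, $\mathrm{Tr}(J)$ give three small limit cycles by the standard degenerate Hopf argument (Chapter~4 of \cite{ZTW}). The transversality condition needed is the non-vanishing of the $4\times 4$ Jacobian $\partial(f_1,f_2,f_3,\mathrm{Tr})/\partial(\Lambda_0,\gamma,\eta,p)$ at the weak-focus point, which is checked directly from the explicit forms of $\breve\Lambda_0,\breve\gamma$ in \eqref{5.1}.

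The main obstacle will be the computational one, just as in Theorem~\ref{k2}: the polynomials $f_3,f_4$ are enormous, and after substituting the roots of the quartics $l_2$ and $l_4$ they become even larger, so the resultant chain $\mathrm{Res}(\mathcal{L}_1,\mathcal{L}_j,p)$ followed by elimination in $\eta$ can only be carried out with a computer algebra system. Identifying the small number of spurious factors (those coming from the positivity region boundary or from the parameter restrictions $R_i\neq0$) and the finite set of exceptional $k$-values where the final resultant in $k$ vanishes is where care is required; once these are ruled out or recorded as hypotheses, the bound ``order at most $3$'' follows immediately.
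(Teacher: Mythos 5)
Your overall strategy --- substitute the root of $l_2=0$ (resp.\ $l_4=0$) into the focal values and run the same two-stage resultant elimination as in Theorem~\ref{k2} --- is exactly what the paper intends; its own proof here is omitted and simply declared similar to Theorem~\ref{k2}, so in approach you and the paper agree. However, there is a genuine logical slip in the target of your elimination. To conclude that $\bar{E}_2^*$ has order at most $3$ you must show that $f_1$, $f_2$, $f_3$ cannot vanish simultaneously, i.e.\ $V(f_1,f_2,f_3,l_2)\cap\Omega^*=\emptyset$, which is the analogue of $V(\mathcal{L}_1,\mathcal{L}_2,\mathcal{L}_3)\cap\Omega^*=\emptyset$ in Theorem~\ref{k2}. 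Your stated target $V(f_1,f_2,f_3,f_4,l_2)\cap\Omega^*=\emptyset$ is strictly weaker: it only rules out all four focal values vanishing at once, so it is perfectly compatible with $f_1=f_2=f_3=0$ and $f_4\neq0$, i.e.\ with a weak focus of order $4$ and four small limit cycles. The inference ``no more than three of the focal values vanish simultaneously, hence at most three limit cycles'' is therefore invalid as written. (Your proposed computations do contain the needed ingredients --- $\mathrm{Res}(\mathcal{L}_1,\mathcal{L}_2,p)$, $\mathrm{Res}(\mathcal{L}_1,\mathcal{L}_3,p)$ and $\mathrm{Res}(\tilde g_2,\tilde g_3,\eta)$ already suffice --- but the logical packaging must be corrected, and the resultants involving $f_4$ are superfluous.)

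The same off-by-one confusion infects your realization step: you propose to produce the three limit cycles of Fig.~\ref{w6} by perturbing from a point where $f_1=f_2=f_3=0$ but $f_4\neq0$. That is the order-$4$ configuration; it contradicts the bound you are proving and, more to the point, lies in the very set you have just argued is empty, so no such base point exists. The correct construction starts from $\mathrm{Tr}(J)=0$, $f_1=f_2=0$, $f_3\neq0$ and perturbs $f_2$, $f_1$ and $\mathrm{Tr}(J)$ successively with alternating signs, each perturbation small relative to the previous one; the transversality needed is the independence of $(\mathrm{Tr},f_1,f_2)$ with respect to three of the parameters $(\Lambda_0,\gamma,\eta,p)$ subject to \eqref{5.1}, not the $4\times4$ Jacobian you wrote down. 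With these two corrections your outline coincides with the paper's (omitted) argument.
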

\begin{proof}
The proof is similar to Theorem \ref{k2}, which is omitted here for brevity.
\end{proof}
\begin{figure}
\begin{center}
\begin{overpic}[scale=0.60]{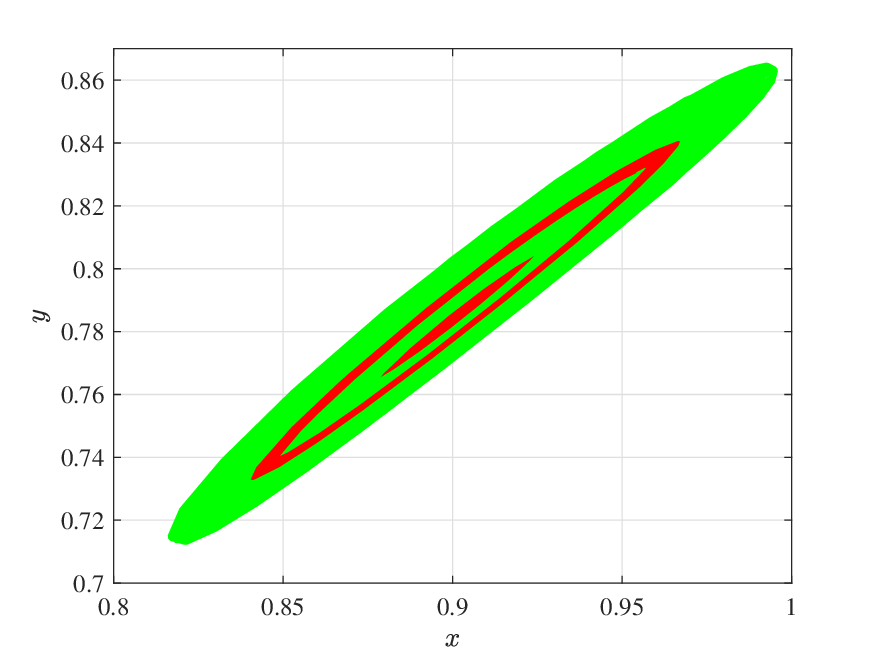}
\end{overpic}
\vspace{-1mm}
\end{center}
\caption{ For $k>2$, system \eqref{1.5} can undergo at most three limit cycles bifurcated by Hopf bifurcation when $R_i\neq0$ and $l_2=0$, where $k=3$, $\Lambda_0=5.06313$, $p=0.4$, $\gamma=4.4746$ and $\eta=0.870547$.\label{w6}}
\end{figure}

\begin{remark}
For $R_i\neq0$ ($i=0, 2, 3$) and $l_2\neq0$ (or $l_4\neq0$), $\bar{E}_5^*$ can be a weak focus of order 4. Here, we present only the numerical simulation results, as shown in Fig.\ref{Hopf}.
\end{remark}

\begin{figure}
\begin{center}
\begin{overpic}[scale=0.60]{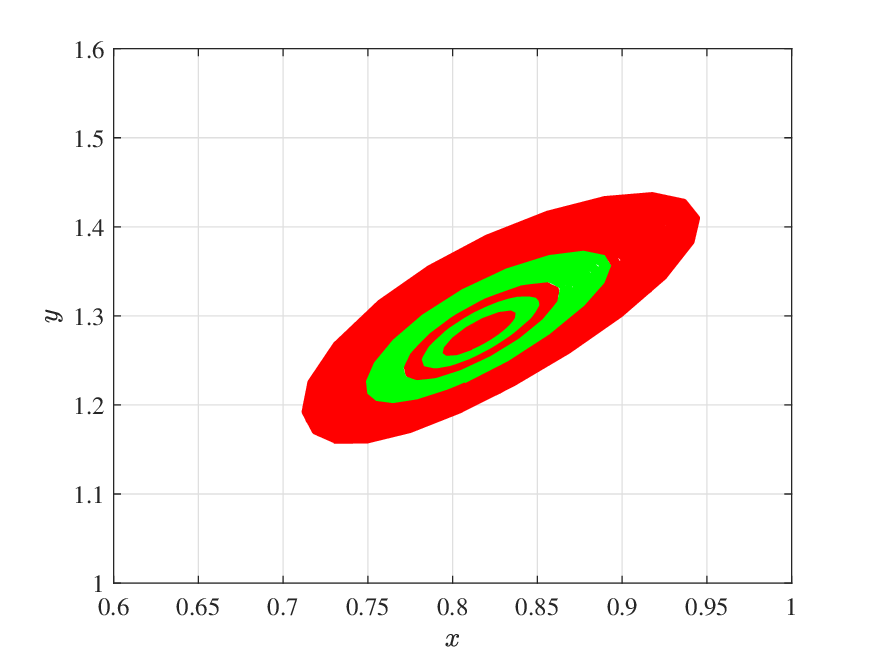}
\end{overpic}
\vspace{-5mm}
\end{center}
\caption{ System \eqref{1.5} exists four limit cycles created by Hopf bifurcation around $\bar{E}_5^*$ when $R_i\neq0$ ($i=0, 1, 2$) and $l_i\neq0$ ($i=2, 4$).\label{Hopf}}
\end{figure}

\section{Discussion}

In this paper, we study an SIRS epidemic model with a nonlinear incidence rate $f(I)S=\beta I(1+\upsilon I^{k-1})S$ with $\beta>0$, $\upsilon>0$ and $k>0$, which was introduced in epidemic models in \cite{PJ}. Although previous studies for system \eqref{1.1} with \eqref{0.7} primarily focused on a specific value of $k$, we investigated the dynamics of system \eqref{1.1} with \eqref{0.7} for general $k$ and revealed richer dynamic behaviors.

By simplification, system \eqref{1.2} is equivalent to system \eqref{1.5}. We found that the disease free equilibrium $E_{0}(0,0)$ always exists and is asymptotically stable when $R_0<1$ and unstable when $R_0>1$ and a saddle-node when $R_0=1$. When $R_0>1$, system \eqref{1.5} has at most three positive equilibria. When positive equilibria exist, it may be a saddle-node, a cusp of codimension 3 or 4, a degenerate node, a nilpotent focus of codimension 3, a nilpotent elliptic equilibrium of codimension 3, a nilpotent focus of codimension 4 or a weak focus of high order as the parameters change.

If the positive equilibrium is degenerate, then system \eqref{1.5} can undergo a Bogdanov-Takens bifurcation of codimension 4 when $(\Lambda_0,\gamma,\eta,p)$ varies near $(\tilde{\Lambda}_0,\tilde{\gamma},\tilde{\eta},\check{p})$. If $(p,\Lambda_0,\gamma,\eta)$ vary in the small neighborhood of $(\check{p},\bar{\Lambda}_0,\bar{\gamma},\bar{\eta})$, system \eqref{1.5} can undergo a nilpotent focus bifurcation of codimension 4 around the degenerate equilibrium. When positive equilibrium satisfies $Tr(J(E))=0$ and $Det(J(E))>0$, which indicating that it may be a weak focus. As parameters vary, the stability of positive equilibrium has changed, and we show that it can be a weak focus of high order. Consequently, system \eqref{1.5} admits four limit cycles bifurcated from Hopf bifurcation. But for some special values of $k$, other dynamic phenomena may occur, which we will discuss in the future work.Finally, we notice that all above dynamics of the model are sensitive to the parameter $\Lambda_0$ and $\gamma$, i.e., the change of $R_0$ will affect the dynamic behavior of the system \eqref{1.5}. However, the expression of $\Lambda_0$ and $\gamma$ shows that $\beta$ plays the important role in the system \eqref{1.2}. Therefore, it is in deed the nonlinear incidence rate that produces the complicated dynamics of epidemic models and makes the models more reasonable and practical.

\end{document}